\documentclass[english, a4paper, final]{article}

\usepackage[T1]{fontenc}
\usepackage{lmodern}

\usepackage{martin}
\usepackage[mathcal]{euscript}
\usepackage{multirow}
\usepackage{amssymb}% http://ctan.org/pkg/amssymb
\usepackage{pifont}% http://ctan.org/pkg/pifont

\begin{document}
\title{Critical configurations for three projective views}
\author{Martin Bråtelund}
%\address{Department of Mathematics\\
%University of Oslo\\
%Moltke Moes vei 35\\
%0316 Oslo\\
%Norway}
%\email{mabraate@mail.uio.no}
\date{\today}

\maketitle

\begin{abstract}
The problem of structure from motion is concerned with recovering the 3-dimensional structure of an object from a set of 2-dimensional images taken by unknown cameras. Generally, all information can be uniquely recovered if enough images and point correspondences are provided, yet there are certain cases where unique recovery is impossible; these are called \emph{critical configurations}. We use an algebraic approach to study the critical configurations for three projective cameras. We show that all critical configurations lie on the intersection of quadric surfaces, and classify exactly which intersections constitute a critical configuration. 
\end{abstract}

%\newpage

%\setlength{\parindent}{0 pt }
%\setlength{\parskip}{2.25 ex plus 0.75 ex minus 0.3 ex }

\textbf{NOTE:} The published version of this paper claims that a configuration of points lying on a twisted cubic passing through exactly one of the three camera centers is not critical. While this is generally true, the configuration will be critical if the line spanned by the two camera centers not lying on the twisted cubic is a secant to the twisted cubic. This version of the paper corrects this error.

\section{Introduction} 

In computer vision, one of the main problems is that of \emph{structure from motion}, where given a set of $2$-dimensional images the task is to reconstruct a scene in $3$-space and find the camera positions in this scene. Over time, many techniques have been developed for solving these problems for varying camera models and under different assumptions on the space and image points \cite{maybank1993theory, hartleyzisserman}. In general, with enough images and enough points in each image, one can uniquely recover the original scene. However, there are also some 3D-configurations of points and cameras where a unique recovery from the images is never possible. These are called \emph{critical configurations}.

Much work has been done to understand critical configurations for various settings \cite{buchanan1988twistedcubic, hartley2000, hartleyKahlAstrom2001, HKCalibrated, bertolini2007criticalOneView, HK, bertolini2020criticalRank, BunchananCriticalLines}, with results dating as far back as 1941 \cite{Krames1941}. While interesting from a purely theoretical viewpoint, critical configurations also play a part in practical applications. Even though critical configurations are rare in real-life reconstruction problems when enough data is available (due to noise), it has been shown that as the configurations approach the critical ones, reconstruction algorithms tend to become less and less stable \cite{stability, HK, bertoliniStability}.

\begin{figure}
\begin{center}
\includegraphics[width = 1\textwidth]{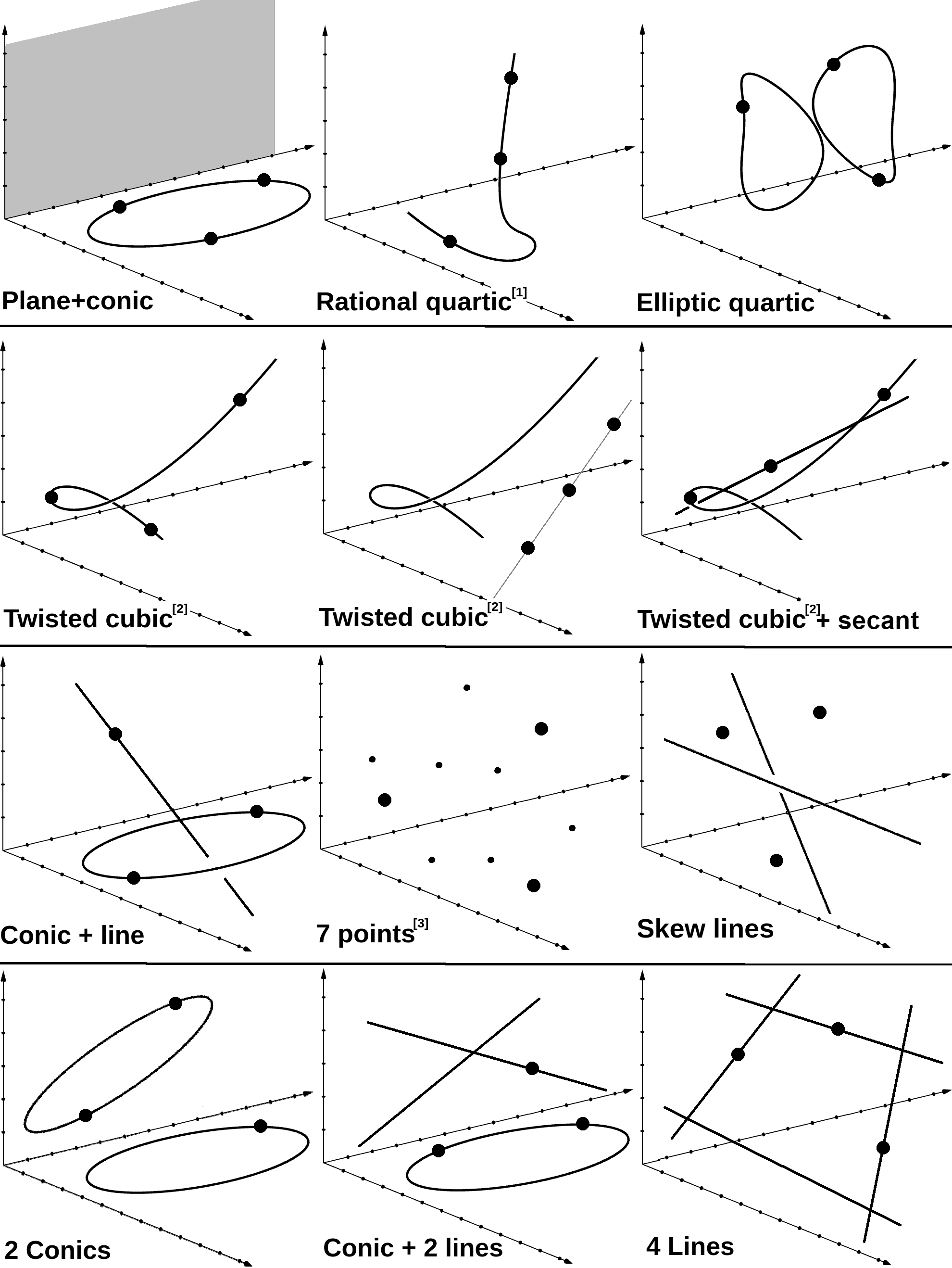}
\end{center}
\caption{The blow-down of some of the critical configurations for three views.\\
{[1]} The rational quartic may have a node or cusp.\\
{[2]} The twisted cubic may degenerate to the union of a conic and a line, or to three lines.\\
{[3]} The 7-point configuration must satisfy the conditions in \cref{prop:7_points_critical}.}
\label{fig_12critical}
\end{figure}

We will study the critical configurations for three projective cameras using a new approach. The idea is as follows (full details in \cref{sec:approach}): A set of $n$ cameras $\textbf{P}$ defines a rational map $\phi_{\textbf{P}}\from\p3\dashrightarrow(\p2)^n$, which extends to a morphism under blow-up of the camera centers:
\begin{equation*}
\begin{tikzcd}
\Bl_{\textbf{P}}(\p3) \arrow[r, "\widetilde{\phi_\textbf{P}}"] \arrow[d, "\pi_\textbf{P}"'] & (\p2)^{n} \\
\p3 \arrow[ru, "\phi_\textbf{P}"', dashed]                     &    
\end{tikzcd}
\end{equation*}
A set of $n$ cameras $\textbf{P}$ and a set of points $X\subset\Bl_{\textbf{P}}(\p3)$ is referred to as a \emph{configuration}. A configuration is critical if there exists another set of cameras $\textbf{Q}$ and points $Y$ such that $\widetilde{\phi_{\textbf{P}}}(X)=\widetilde{\phi_{\textbf{Q}}}(Y)$. The map $\widetilde{\phi_{\textbf{P}}}$ takes $\Bl_{\textbf{P}}(\p3)$ to a variety in $(\p2)^{n}$ called the \emph{multi-view variety}. Since $X$ and $Y$ both map into the intersection of the multi-view varieties of $\widetilde{\phi_{\textbf{P}}}$ and $\widetilde{\phi_{\textbf{Q}}}$, one can classify critical configurations by classifying all possible intersections of multi-view varieties.

We use this approach to recover many of the results in \cite{HK}. \cref{fig_12critical} shows the blow-down of some of the critical configurations for three views, that is, it shows the position of the camera centers (large dots), along with the blow-down $\pi_{\textbf{P}}(X)\subset\p3$ of the points (in most cases, a curve). The critical configurations not shown in \cref{fig_12critical} are subconfigurations of the ones shown. The main result of this paper is as follows:

\begin{theorem}
\label{thr:main_theorem}
A configuration of three projective cameras $\textbf{P}$ and 8 or more points $X\subset\Bl_{\textbf{P}}(\p3)$ form a critical configuration if and only if the points $X$ lie on the strict transform of one of the varieties described in \cref{fig_12critical}, or if they form a subset of such a variety. A configuration of three projective cameras and 6 or fewer points is always a critical configuration.
\end{theorem}

The case of 7 points and three projective cameras is covered in \cref{prop:7_points_critical}. 

The result in \cref{thr:main_theorem} is similar to what was found in \cite{HK}, although one critical configuration (twisted cubic + secant line) is new. Still, even though many of the results are the same, the techniques for obtaining them are new. We use the techniques developed in \cite{twoViews} and classify the critical configurations by considering the possible intersections of multi-view varieties, rather than relying on computations and ad hoc examples. We also hope that these new techniques will allow classification of critical configurations in more complicated scenarios in future work. 

In \cref{sec:background}, we introduce the main concepts, such as cameras and critical configurations, as well as some general results on critical configurations. \cref{sec:approach} describes the approach used to find critical configurations. \cref{sec:two-view_results} gives a summary of the key results from the two-view case necessary for the three-view classification. Sections \ref{sec:preliminary_three_views} and \ref{sec:critical_configurations_for_three_views} make up the main part of this paper. Here we show that critical configurations for three views need to lie on the intersection of three ruled quadrics and describe exactly which such intersections constitute a critical configuration. 

\section{Background}
\label{sec:background}
As this paper deals with a very similar topic as \cite{twoViews}, this section is more or less the same as Section 2 in \cite{twoViews}. We refer the reader to \cite{hartleyzisserman} for the basics on computer vision and multi-view geometry.

Let $\mathbb{C}$ denote the complex numbers, and let $\p{n}$ denote the projective space over the vector space $\mathbb{C}^{n+1}$. Projection from a point $p\in\p3$ is a linear map
\begin{equation*}
P'\from \p3\dashrightarrow\p2.
\end{equation*}
We refer to such a projection and its projection center $p$ as a \emph{camera} and its \emph{camera center} in $\p3$ (following established terminology, we use the words \emph{camera} and \emph{view} interchangeably). Following this theme, we refer to points in $\p3$ as \emph{space points} and points in $\p2$ as \emph{image points}. Similarly, $\p2$ will be referred to as an \emph{image}.

Once coordinates are chosen in $\p3$ and $\p2$, a camera can be represented by a $3\times4$ matrix of full rank called the \emph{camera matrix}. The camera center is then given as the kernel of the matrix. For the most part, we make no distinction between a camera and its camera matrix, referring to both simply as cameras. We use the \emph{real projective pinhole camera model}, meaning that we require a camera matrix to be of full rank and to have only real entries.

\begin{remark}
\label{rem:basis_chosen}
Throughout the paper, whenever we talk about cameras it is to be understood that a choice of coordinates has been made, both on the images and 3-space.
\end{remark}

Since the map $P'$ is not defined at the camera center $p$, it is not a morphism. This problem can be mended by taking the blow-up. Let $\Bl_{P}(\p3)$ be the blow-up of $\p3$ in the camera center of $P'$. We then get the following diagram:
\begin{equation*}
\begin{tikzcd}
\Bl_{P}(\p3) \arrow[r, "P"] \arrow[d, "\pi_P"'] & \p2 \\
\p3 \arrow[ru, "P'"', dashed]                     &    
\end{tikzcd}
\end{equation*}
where $\pi_P$ denotes the blow-down of $\Bl_{P}(\p3)$. This gives a morphism from $\Bl_{P}(\p3)$ to $\p2$. We denote the cameras by $P'$ when talking about rational maps and $P$ when talking about morphisms (see diagram above). For ease of notation, we retain the names \emph{camera} and \emph{camera center}, although one should note that in $\Bl_{P}(\p3)$ the camera center is no longer a point, but an exceptional divisor. 
\begin{definition}
A set of cameras $P_i$ is said to have \emph{collinear camera centers} if the camera centers of $P_i'$ lie on a line.
\end{definition}

\begin{remark}
Throughout the paper, when dealing with multiple cameras, we assume that all camera centers are distinct.
\end{remark}

\begin{definition}
Given an $n$-tuple of cameras $\textbf{P}=(P_1,\ldots,P_n)$, with camera centers $p_1,\ldots,p_n$, let $\Bl_{\textbf{P}}(\p3)$ denote the blow-up of $\p3$ in the $n$ camera centers and let $\pi_i$ denote the blow-down
\begin{align*}
\pi_i\from \Bl_{\textbf{P}}(\p3)\to \Bl_{P_i}(\p3).
\end{align*}
We define the \emph{joint camera map} to be the map
\begin{align*}
\widetilde{\phi_\textbf{P}}=P_1\times\cdots\times P_n\from\Bl_{\textbf{P}}(\p3)&\to(\p2)^{n},\\
x&\mapsto (P_1(\pi_1(x)),\ldots ,P_n(\pi_n(x))).
\end{align*}
\end{definition}

This again gives a commutative diagram
\begin{equation*}
\begin{tikzcd}
\Bl_{\textbf{P}}(\p3) \arrow[r, "\widetilde{\phi_\textbf{P}}"] \arrow[d, "\pi_\textbf{P}"'] & (\p2)^{n} \\
\p3 \arrow[ru, "\phi_\textbf{P}"', dashed]                     &    
\end{tikzcd}
\end{equation*}

\begin{remark}
The reason we use the blow-up $\Bl_{\textbf{P}}(\p3)$ rather than $\p3$ is to turn the cameras (and hence the joint camera map) into morphisms rather than rational maps. This ensures that the image of the joint camera map is Zariski closed, turning it into a projective variety. This in turn makes the maximal critical configurations (\cref{def:critical_configuration}) Zariski closed, simplifying the classification, otherwise, the maximal configurations would be constructible sets (see \cref{rem:quadric_minus_lines}).
\end{remark}

\begin{definition}
We denote the image of the joint camera map $\widetilde{\phi_\textbf{P}}$ as the \emph{multi-view variety} of $P_1,\ldots,P_n$. The set of all multi-homogeneous polynomials vanishing on $\im(\widetilde{\phi_\textbf{P}})$ is an ideal that we denote as the \emph{multi-view ideal}.
\end{definition}

\textbf{Notation.} Most works on computer vision do not use the blow-up, defining cameras/the joint camera map as rational maps rather than morphisms. Hence, they tend to define the multi-view variety as the \emph{closure} of the image rather than as the image itself. While our definition of the multi-view variety seems different, it is equivalent to that used in other works, like \cite{Tomas}.

\textbf{Notation.} While the multi-view variety is always irreducible, we use the term \emph{variety} to also include reducible algebraic sets.

\begin{theorem}[{\cite[Theorem 3.7]{Tomas}}]
\label{thr:ideal_of_multiview_variety}
The ideal of the multi-view variety is generated only by bilinear and trilinear forms. In particular, it is generated by the determinant of
\begin{align*}
\begin{bmatrix}
P_i'&\textbf{x}&\textbf{0}\\
P_j'&\textbf{0}&\textbf{y}
\end{bmatrix},
\end{align*}
for each pair of cameras, and the $7\times7$ minors of 
\begin{align*}
\begin{bmatrix}
P_i'&\textbf{x}&\textbf{0}&\textbf{0}\\
P_j'&\textbf{0}&\textbf{y}&\textbf{0}\\
P_k'&\textbf{0}&\textbf{0}&\textbf{z}
\end{bmatrix},
\end{align*}
for each triple. Here $\textbf{x}$, $\textbf{y}$ and $\textbf{z}$ are the $3\times1$ vectors with variables in the $i$-th, $j$-th and $k$-th image respectively.
\end{theorem}

\begin{definition}
Given a set of points $S\subset(\p2)^{n}$, a \emph{reconstruction} of $S$ is a configuration of cameras and points $(P_1,\ldots,P_n,X)$ such that $S=\widetilde{\phi_\textbf{P}}(X)$ where $\widetilde{\phi_\textbf{P}}$ is the joint camera map of the cameras $P_1,\ldots,P_n$. 
\end{definition}

\begin{definition}
Given a configuration of cameras and points $(P_1,\ldots,P_n,X)$, we refer to $\widetilde{\phi_\textbf{P}}(X)\subset(\p2)^{n}$ as the \emph{images} of $(P_1,\ldots,P_n,X)$.
\end{definition}

Note that every configuration of cameras and points is a reconstruction of its images, so every configuration (of cameras and points) is a reconstruction and vice versa.

Given a set of image points $S\subset(\p2)^{n}$ as well as a reconstruction $(P_1,\ldots,P_n,X)$ of $S$, note that any scaling, rotation, translation, or more generally, any real projective transformation of $(P_1,\ldots,P_n,X)$ does not change the images, giving rise to a large family of reconstructions of $S$. However, we are not interested in differentiating between these reconstructions.

\begin{definition}
\label{def:equvalent_configurations}
Given a set of points $S\subset(\p2)^{n}$, let $(P_1,\ldots,P_n,X)$ and $(Q_1,\ldots,Q_n,Y)$ be two reconstructions of $S$. Let $\overline{X}=\pi_\textbf{P}(X)$ and $\overline{Y}=\pi_{\textbf{Q}(Y)}$ and let $P_i'$ and $Q_i'$ be the matrix representation of $P_i$ and $Q_i$ respectively. The two reconstructions of $S$ are considered \emph{equivalent} if there exists an element $A\in\PGL(4)$, such that
\begin{align*}
&A(\overline{X})=\overline{Y},\\
&P_i'A^{-1}=Q_i',\quad \forall i.
\end{align*}
\end{definition}

From now on, whenever we talk about a configuration of cameras and points, it is to be understood as unique up to the action above, and two configurations will be considered different only if they are nonequivalent. As such, we consider a reconstruction to be unique if it is unique up to the action above.

\begin{definition}
\label{def:conjugate_configuration/point}
Two configurations of cameras and points $(P_1,\ldots,P_n,X)$, and $(Q_1,\ldots,Q_n,Y)$, are called \emph{conjugate configurations} if they are nonequivalent reconstructions of the same set. That is, if they satisfy $\widetilde{\phi_\textbf{P}}(X)=\widetilde{\phi_\textbf{Q}}(Y)$, but not the conditions in \cref{def:equvalent_configurations}. Pairs of points $(x,y)\in X\times Y$ are called \emph{conjugate points} if $\widetilde{\phi_\textbf{P}}(x)=\widetilde{\phi_\textbf{Q}}(y)$.
\end{definition}

\begin{definition}
\label{def:critical_configuration}
A configuration of cameras and points $(P_1,\ldots,P_n,X)$ is said to be a \emph{critical configuration} if it has at least one conjugate configuration. A critical configuration $(P_1,\ldots,P_n,X)$ is said to be \emph{maximal} if there exists no critical configuration $(P_1,\ldots,P_n,X')$ such that $X\subsetneq X'$.
\end{definition}

Hence, a configuration is critical if and only if the images it produces do not have a unique reconstruction.

\begin{remark}
Various definitions of critical configurations exist. For instance, \cite{Krames1941} considers the cone with two cameras on the same generator to be critical, while it fails to be critical by our definition. We use a definition similar to the one in \cite{HK}, except we are working in $\Bl_{\textbf{P}}(\p3)$. If one considers the blow-down, our definition matches the results in \cite{HK}.
\end{remark}

\begin{definition}
\label{def:set_of_critical_points}
Let $\textbf{P}$ and $\textbf{Q}$ be two $n$-tuples of cameras, let $\Bl_{\textbf{P}}(\p3)$ and $\Bl_{\textbf{Q}}(\p3)$ denote the blow-up of $\p3$ in the camera centers of $\textbf{P}$ and $\textbf{Q}$ respectively. Define
\begin{align*}
I=\Set{(x,y)\in\Bl_{\textbf{P}}(\p3)\times\Bl_{\textbf{Q}}(\p3)\mid \widetilde{\phi_\textbf{P}}(x)=\widetilde{\phi_\textbf{Q}}(y)}.
\end{align*}
The projection of $I$ to each coordinate gives us two varieties, $X$ and $Y$, which we denote $X$ as the \emph{set of critical points of $P$ with respect to $Q$}, and similarly for $Y$.
\end{definition}

This definition is motivated by the following fact:

\begin{proposition}
\label{prop:critical_points_give_critical_configuration}
Let $\textbf{P}$ and $\textbf{Q}$ be two $n$-tuples of cameras such that there is no $A\in\PGL(4)$ satisfying $P'_iA^{-1}=Q'_i$ for all $i$. Let $X$ be the critical points of $P$ with respect to $Q$ and conversely for $Y$. Then $(\textbf{P},X)$ is a critical configuration, with $(\textbf{Q},Y)$ as its conjugate. Furthermore, $(\textbf{P},X)$ is maximal with respect to $\textbf{Q}$ in the sense that if there exists a critical configuration $(\textbf{P},X')$ with $X\subsetneq X'$, then its conjugate consists of cameras different from $\textbf{Q}$.
\end{proposition}
\begin{proof}
(2.12 in \cite{twoViews}): It follows from \cref{def:set_of_critical_points} that for each point $x\in X$, we have a conjugate point $y\in Y$. Hence the two configurations have the same images. Inequivalence follows from the fact that the cameras lie in different orbits under $\PGL(4)$, so the second point in \cref{def:equvalent_configurations} can not be satisfied. Hence they are both critical configurations, conjugate to one another. 

The (partial) maximality follows from the fact that if we add a point $x_0$ to $X$ that does not lie in the set of critical points, there is (by \cref{def:set_of_critical_points}) no point $y_0\in\Bl_{\textbf{Q}}(\p3)$ such that $\widetilde{\phi_\textbf{P}}(x_0)=\widetilde{\phi_\textbf{Q}}(y_0)$.
\end{proof}

The goal of this paper is to classify all maximal critical configurations for three cameras. The reason we focus primarily on the maximal ones is that every critical configuration is contained in a maximal one and (when working with more than one camera) the converse is true as well, any subconfiguration of a critical configuration is itself critical.

We conclude this section with a final, useful property of critical configurations, namely that the only property of the cameras we need to consider when exploring critical configurations is the position of their camera centers (i.e. change of coordinates in the images does not affect criticality). 

\begin{proposition}[{\cite[Proposition 3.7]{HK}}]
\label{prop:only_camera_centers_matter}
Let $(P_1,\ldots,P_n)$ be $n$ cameras with centers $p_1,\ldots,p_n$, and let $(P_1,\ldots,P_n,X)$ be a critical configuration. \newline If $(\mathcal{P}_1,\ldots,\mathcal{P}_n)$ is a set of cameras sharing the same camera centers, the configuration $(\mathcal{P}_1,\ldots,\mathcal{P}_n,X)$ is critical as well.
\end{proposition}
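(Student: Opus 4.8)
The plan is to exploit the elementary fact that two cameras sharing a center differ only by a projective change of coordinates in the image plane, and that such a change is invisible to the notion of criticality because it can be applied uniformly to any conjugate configuration as well.

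First I would record the linear-algebra observation underlying everything. If $P_i$ and $P_i'$ are full-rank $3\times4$ matrices with the same kernel $\langle p_i\rangle$, then both factor through the quotient $\mathbb{C}^4/\langle p_i\rangle\cong\mathbb{C}^3$ via isomorphisms, so there is an invertible $H_i\in\mathrm{GL}(3)$, i.e.\ an element of $\PGL(3)$, with $P_i'=H_iP_i$. The crucial point is that post-composing with the invertible $H_i$ alters neither the kernel nor, therefore, the blow-up: $\widetilde{\p3}_{P'}$ and $\widetilde{\p3}_P$ coincide, and the joint camera maps are related by $\widetilde{\phi_{P'}}=\Phi\circ\widetilde{\phi_P}$, where $\Phi=H_1\times\cdots\times H_n$ is the coordinatewise automorphism of $(\p2)^n$.

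Next I would produce the required conjugate. Since $(\textbf{P},X)$ is critical, pick a conjugate $(\textbf{Q},Y)$ with centers $q_i$ and $\widetilde{\phi_P}(X)=\widetilde{\phi_Q}(Y)$, and define $Q_i':=H_iQ_i$ using the \emph{same} matrices $H_i$ as above. These are again full-rank cameras with unchanged centers $q_i$, and $\widetilde{\phi_{Q'}}=\Phi\circ\widetilde{\phi_Q}$ with the same $\Phi$. Since $\Phi$ is a bijection, $\widetilde{\phi_{P'}}(X)=\Phi(\widetilde{\phi_P}(X))=\Phi(\widetilde{\phi_Q}(Y))=\widetilde{\phi_{Q'}}(Y)$, so $(\textbf{Q}',Y)$ reproduces exactly the images of $(\textbf{P}',X)$.

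The only step requiring care, and hence the main obstacle, is verifying that $(\textbf{Q}',Y)$ is genuinely \emph{non-equivalent} to $(\textbf{P}',X)$, since equivalence in \cref{def:equvalent_configurations} constrains the cameras and not merely their centers. I would argue by contrapositive: suppose some $A\in\PGL(4)$ witnesses equivalence of $(\textbf{P}',X)$ and $(\textbf{Q}',Y)$, so that $A(\overline{X})=\overline{Y}$, $A(p_i)=q_i$, and $P_i'(x)=Q_i'(A(x))$ for all $i$ and all $x\neq p_i$. The last equation reads $H_iP_i(x)=H_iQ_i(A(x))$, and cancelling the invertible $H_i$ yields $P_i(x)=Q_i(A(x))$. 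Together with $A(\overline{X})=\overline{Y}$ and $A(p_i)=q_i$, this says precisely that the same $A$ witnesses equivalence of $(\textbf{P},X)$ and $(\textbf{Q},Y)$, contradicting the criticality of $(\textbf{P},X)$. Hence $(\textbf{Q}',Y)$ is a conjugate of $(\textbf{P}',X)$, so the latter is critical. All the real content lies in this cancellation; everything else is bookkeeping, and no part of the blow-up construction is disturbed because the $H_i$ act purely on the target images.
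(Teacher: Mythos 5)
Your proof is correct, and it is essentially the standard argument that the paper defers to (the proof is only cited from \cite{twoViews}/\cite{HK}, not reproduced here): cameras with a common center differ by an element of $\PGL(3)$ acting on the image plane, one applies the same coordinate changes to the conjugate cameras, and the cancellation of the invertible $H_i$ shows non-equivalence is preserved. You handle the one genuinely delicate point, that equivalence is a condition on the cameras and not just the centers, exactly as required, so there is nothing to add.
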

\begin{proof}
Since $P_i$ and $\mathcal{P}_i$ share the same camera center and the camera center determines the map uniquely up to a choice of coordinates, there exists some $H_i\in\PGL(3)$ such that $\mathcal{P}_i=H_iP_i$. Let $(Q_1,\ldots,Q_n,Y)$ be a conjugate to $(P_1,\ldots,P_n,X)$. Then $(H_1Q_1,\ldots,H_nQ_n,Y)$ is a conjugate to $(H_1P_1,\ldots,H_nP_n,X)=(\mathcal{P}_1,\ldots,\mathcal{P}_n,X)$, so this configuration is critical as well.
\end{proof}

\section{Approach}
\label{sec:approach}
Consider a critical configuration $(P_1,\ldots,P_n,X)$. Since it is critical, there exists a conjugate configuration $(Q_1,\ldots,Q_n,Y)$ giving the same images in $(\p2)^{n}$. The two sets of cameras define two joint-camera maps $\widetilde{\phi_\textbf{P}}$ and $\widetilde{\phi_\textbf{Q}}$.
\begin{center}
\begin{tikzcd}[ampersand replacement=\&, column sep=small]
\Bl_{\textbf{P}}(\p3) \arrow[rd, "\widetilde{\phi_\textbf{P}}"] \&   \& \Bl_{\textbf{Q}}(\p3) \arrow[ld, "\widetilde{\phi_\textbf{Q}}"'] \\
                          \& (\p2)^{n} \&                           
\end{tikzcd}
\end{center}

The image of $X$ lies in the multi-view variety $\im(\widetilde{\phi_{\textbf{P}}})\subseteq(\p2)^{n}$ and the image of $Y$ lies in $\im(\widetilde{\phi_{\textbf{Q}}})$. As such, the two sets of points $X$ and $Y$ lie in such a way that they both map (with their respective maps) into the intersection of the two multi-view varieties $\im(\widetilde{\phi_{\textbf{P}}})\cap \im(\widetilde{\phi_{\textbf{Q}}})$.

\begin{remark}
This is one of the points at which the choice of coordinates in \cref{rem:basis_chosen} is needed. Without it, there is no way to identify the $(\p2)^{n}$ containing $\im(\widetilde{\phi_{\textbf{P}}})$ with that containing $\im(\widetilde{\phi_{\textbf{Q}}})$.
\end{remark}

\begin{lemma}
\label{lem:joint-camera_map_is_isomorphism}
Let $(P_1,\ldots,P_n)$ be a tuple of $n\geq2$ cameras. If the camera centers do not all lie on a line, the joint camera map $\widetilde{\phi_\textbf{P}}$ is an embedding. If the camera centers all lie on a line, the joint camera map contracts this line and is an embedding everywhere else.
\end{lemma}

\begin{proof}
Let $\textbf{x}=(x_1,\ldots,x_n)\in(\p2)^{n}$ be a point in the image of $\widetilde{\phi_\textbf{P}}$. The preimage of $\textbf{x}$ is the intersection $\bigcap\limits_{i=1}^{n}l_{P_i}$ where $l_{P_i}=P_i^{-1}(x_i)$, that is, the strict transform of a line passing through the camera center $p_i$. As such, they intersect in a single point unless they are all equal to the strict transform of a line passing through all camera centers. Hence the joint camera map is bijective outside of a potential line spanned by the camera centers.

To show that this bijective map is indeed an embedding, we need to show that no tangent collapses. For contradiction, assume there exists a point $x\in\Bl_{\textbf{P}}(\p3)$ and a tangent line $L$ (different from the strict transform of the line containing all camera centers, if such a line exists) which collapses under $\widetilde{\phi_\textbf{P}}$. First, if $L$ is not contained in an exceptional divisor, then $L$ is the strict transform of a line $L'$ in $\p3$. There is at least one camera center, say $p_1$, that does not lie on $L'$. Let $\pi_i$ be the projection onto the $i$-th factor, the map 
\begin{align*}
f\from\Bl_{\textbf{P}}(\p3)\xrightarrow{\widetilde{\phi_\textbf{P}}}(\p2)^{n}\xrightarrow{\pi_1}\p2
\end{align*}
restricted to $L$ is a smooth embedding, of $L$ into $\p2$. Since it factors through $(\p2)^{n}$, the map $\widetilde{\phi_\textbf{P}}$ is also a smooth embedding of this line, which violates our assumption that $L$ is collapsed by $\widetilde{\phi_\textbf{P}}$.

On the other hand, if $L$ is contained in an exceptional divisor, say the one we get when blowing up $p_1$, the map $f$ above is again a smooth embedding when restricted to $L$, by the same argument as above, $\widetilde{\phi_\textbf{P}}$ does not collapse $L$ in this case either. It follows that the map $\widetilde{\phi_\textbf{P}}$ does not collapse any tangents, and hence is an embedding.
\end{proof}

This shows that the two multi-view varieties $\im(\widetilde{\phi_{\textbf{P}}})$ and $\im(\widetilde{\phi_{\textbf{Q}}})$ are irreducible 3-folds, so their intersection is either a surface, a curve, or a set of points. Taking the preimage under $\widetilde{\phi_{\textbf{P}}}$, $\im(\widetilde{\phi_{\textbf{P}}})\cap\im(\widetilde{\phi_{\textbf{Q}}})$ pulls back to a surface, curve, or point set in $\Bl_{\textbf{P}}(\p3)$, which is exactly the set of critical points of $P$. As such, \emph{classifying all maximal critical configurations can be done by classifying all possible intersections between two multi-view varieties.} The configuration of points in $\Bl_{\textbf{P}}(\p3)$ can be found by taking the pullback of $\im(\widetilde{\phi_{\textbf{P}}})\cap\im(\widetilde{\phi_{\textbf{Q}}})$ under $\widetilde{\phi_{\textbf{P}}}$.

\begin{proposition}
\label{cor:critical_configurations_lie_on_quadrics_and_cubics}
Let $(P_1,\ldots,P_n,X)$ be a maximal critical configuration. Then $X$ is the intersection of surfaces $\widetilde{S^{ij}}$ (one for each pair of cameras) and surfaces $\widetilde{S^{ijk}}$ (one or more for each triple of cameras). The surfaces satisfy the following properties:
\begin{itemize}
\item The blow-down $S^{ij}$ of $\widetilde{S^{ij}}$ is a quadric surface containing the camera centers $p_i$ and $p_j$. The blow-down $S^{ijk}$ of $\widetilde{S^{ijk}}$ is a cubic surface containing the camera centers $p_i$, $p_j$, and $p_k$.
\item $\widetilde{S^{ij}}$ is the surface one gets when taking the strict transform of $S^{ij}$ when blowing up $p_i,p_j$ and the total transform under the remaining blow-ups. $\widetilde{S^{ijk}}$ is the surface one gets when taking the strict transform of $S^{ijk}$ when blowing up $p_i,p_j,p_k$ and the total transform under the remaining blow-ups. 
\end{itemize}
\end{proposition}
\begin{proof}
By \cref{thr:ideal_of_multiview_variety}, the ideal of the multi-view variety is generated only by bilinear and trilinear polynomials. Under the rational map $\phi_{\textbf{P}}$, a polynomial in $(\p2)^{n}$ of multidegree $(d_1,\ldots,d_n)$ pulls back to a polynomial of degree $d_1+\cdots+d_n$ with multiplicity $d_i$ in the camera center $p_i$. This gives us the surfaces $S^{ij}$ and $S^{ijk}$ in $\p3$, whose intersection is the blow-down of $X$. 

The variety $X$ itself, however, is the pullback of the same polynomials under the morphism $\widetilde{\phi_{\textbf{P}}}$. Under this morphism, a polynomial of multidegree $(d_1,\ldots,d_n)$ pulls back to a divisor in the class $(d_1+\cdots+d_n)H-d_1E_1-\cdots -d_nE_n$, where $H$ is the class of a hyperplane and $E_i$ is the class of the exceptional divisor we get when blowing up $p_i$. In particular, a bilinear form with $d_i,d_j=1$ will pull back to a surface in the class $2H-E_i-E_j$, that is, the strict transform of a quadric surface under the blow-up of $p_i,p_j$ and the total transform under the remaining blow-ups.
\end{proof}

\begin{remark}
Note that if two sets of cameras satisfy point 2 in \cref{def:equvalent_configurations} their multi-view varieties are equal. Hence the multi-view varieties being different (which is the case we study throughout the paper) is enough to ensure that the two configurations are inequivalent.
\end{remark}

\section{Key results from the two-view case}
\label{sec:two-view_results}
While the main focus of this paper is the critical configurations for three views, there are some key results from the two-view case which can help simplify many of the three-view arguments. This section gives a summary of the main results on critical configurations for two views. A full analysis of the two-view case can be found in \cite{twoViews} or \cite{HK}.

\subsection{The multi-view variety}
\begin{proposition}[The fundamental form]{\cite[Sections 9.2 and 17.1]{hartleyzisserman}}
\label{lem:fundamental_form}
For two cameras $P_1,P_2$ with distinct centers, the multi-view variety $\im(\widetilde{\phi_\textbf{P}})\subset\pxp$ is the vanishing locus of a single, bilinear, rank 2 form $F_P$, called the fundamental form (or fundamental matrix).
\begin{align*}
F_P(\textbf{x},\textbf{y})=\det\begin{bmatrix}
P_1'&\textbf{x}&\textbf{0}\\
P_2'&\textbf{0}&\textbf{y}
\end{bmatrix},
\end{align*}
where $\textbf{x}$ and $\textbf{y}$ are the variables in the first and second image respectively.
\end{proposition}

In the literature, this bilinear form is usually represented by a $3\times3$ matrix called the \emph{fundamental matrix}.

\begin{proof}
By \cref{lem:joint-camera_map_is_isomorphism}, the multi-view variety for two cameras is an irreducible 3-fold in $\pxp$. It follows that the multi-view ideal in the multi-graded ring of $\pxp$ can be generated by a single polynomial. Let $(\textbf{x},\textbf{y})$ be a generic point in the multi-view variety, then there exists a point $\textbf{X}\in\p3$ such that $P_1(\textbf{X})=\lambda_1\textbf{x}$ and $P_2(\textbf{X})=\lambda_2\textbf{y}$, then
\begin{align*}
\underbrace{\begin{bmatrix}
P_1'&\textbf{x}&\textbf{0}\\
P_2'&\textbf{0}&\textbf{y}
\end{bmatrix}}_{A}\begin{bmatrix}
\textbf{X}\\
-\lambda_1\\
-\lambda_2
\end{bmatrix}=0.
\end{align*}
%On the other hand, if $(\textbf{x},\textbf{y})$ is the image of a point on an exceptional divisor, there exists a point $\textbf{X}\in\p3$ such that $P_1(\textbf{X})=0$ and $P_2(\textbf{X})=\textbf{y}$ or $P_1(\textbf{X})=\textbf{x}$ and $P_2(\textbf{X})=0$, so we have either
%\begin{align*}
%\begin{bmatrix}
%P_i&\textbf{x}&\textbf{0}\\
%P_j&\textbf{0}&\textbf{y}
%\end{bmatrix}\begin{bmatrix}
%\textbf{X}\\
%0\\
%-1
%\end{bmatrix}=0,\quad\quad \text{ or }\quad\quad
%\begin{bmatrix}
%P_i&\textbf{x}&\textbf{0}\\
%P_j&\textbf{0}&\textbf{y}
%\end{bmatrix}\begin{bmatrix}
%\textbf{X}\\
%-1\\
%0
%\end{bmatrix}=0.
%\end{align*}
Since $A$ has a non-zero kernel, the determinant $F_P$ has to vanish on $(\textbf{x},\textbf{y})$. Now we need only show that the determinant is irreducible to prove that $F_P(\textbf{x},\textbf{y})$ generates the multi-view ideal. To do this, we show that $F_P$ is of rank 2, this is sufficient since a reducible polynomial is always of rank 1.

Let us find the left kernel of $F_P$, that is, all values of $\textbf{x}$ such that $F_P(\textbf{x},-)=0$. The fifth column of $A$ has zero as its bottom three entries, so the only way to get this as a linear combination of the bottom three entries of the other 5 columns is as $P_2'\cdot p_2+0\cdot \textbf{y}$ (there are no other options since $\textbf{y}$ is a set of variables, and $P_2'$ is of full rank. It follows that the left kernel of $F_P$ consists only of the point $P_1'\cdot p_2$, so $F_P$ is indeed of rank 2.
\end{proof}

\begin{definition}
The special points mentioned at the end of the proof of \cref{lem:fundamental_form}:
\begin{align}
\label{eq:definition_of_epipole}
e_{P_i}^{j}=P_i(p_j),
\end{align}
are called \emph{epipoles}. For the fundamental form of $P_i,P_j$, they satisfy
\begin{align}
\label{eq:epipole_is_kernel}
F_P(e_{P_i}^{j},-)=F_P(-,e_{P_j}^{i})=0.
\end{align}
\end{definition}

By \cref{lem:fundamental_form}, the multi-view ideal for two views is generated by a rank 2 bilinear form. The converse follows from Theorem 9.13. in \cite{hartleyzisserman}.

\begin{theorem}
\label{thr:fundforms_and_camera_pairs_are_1:1}
There is a $1:1$ correspondence between real bilinear forms of rank two, and multi-view ideals for two views.
\end{theorem}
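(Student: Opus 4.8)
The object to build is the map sending a two-view multi-view ideal to the rank-two form that cuts it out; the plan is to check that this map is well defined and injective almost for free, and then to spend the real effort on surjectivity.

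By \cref{lem:fundamental_form} every two-view multi-view ideal is principal, generated by the rank-two fundamental form $F_P$. Regarding forms up to a nonzero scalar (which is all a principal ideal remembers of its generator), sending an ideal to its generator is therefore a well-defined map into the rank-two bilinear forms, and it is injective because a principal ideal determines its generator up to scale. The forward direction --- that $F_P$ is genuinely bilinear of rank two --- is exactly the content of \cref{lem:fundamental_form} together with \eqref{eq:epipole_is_kernel}, so nothing new is needed there.

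The substance of the theorem is surjectivity: every rank-two bilinear form arises as some $F_P$. Given such a form, write it as $F(\mathbf{x},\mathbf{y})=\mathbf{x}^\top M\mathbf{y}$ with $M$ a real $3\times3$ matrix of rank two, and let $e$ span the (one-dimensional, real) left kernel of $M$, so that $e^\top M=0$. I would then exhibit the explicit pair
\begin{equation*}
P_1=[\,I\mid 0\,],\qquad P_2=[\,[e]_\times M\mid e\,],
\end{equation*}
where $[e]_\times$ is the skew-symmetric matrix of $v\mapsto e\times v$, and verify two things. First, that these are honest cameras, i.e.\ real $3\times4$ matrices of rank three: $P_1$ is clear, and for $P_2$ one uses that $e^\top M=0$ forces $e$ to be orthogonal to $\im(M)$, hence $e\notin\im(M)$, so $[e]_\times$ is injective on $\im(M)$, the block $[e]_\times M$ has rank two with column space $e^\perp$, and adjoining the column $e\notin e^\perp$ brings the rank to three. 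Second, that the pair recovers $M$: reading the fundamental form of a normalized pair $[\,I\mid 0\,],[\,A\mid a\,]$ off the determinant in \cref{lem:fundamental_form} gives $F_P=[a]_\times A$, so with the identity $[e]_\times^2=ee^\top-\|e\|^2 I$ and $e^\top M=0$ one gets
\begin{equation*}
F_P=[e]_\times[e]_\times M=\bigl(ee^\top-\|e\|^2 I\bigr)M=-\|e\|^2 M,
\end{equation*}
which is proportional to $M$. Thus $M$ generates a genuine two-view multi-view ideal, and the map is onto.

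The hard part is entirely in these two verifications, and both hinge on the single relation $e^\top M=0$: it is what guarantees $P_2$ has full rank and what collapses $[e]_\times^2 M$ to a scalar multiple of $M$. I would also note explicitly that reality is automatic --- $M$ real makes $e$ and $[e]_\times M$ real --- so the constructed cameras satisfy the real pinhole model, and that any convention-dependent transpose or sign is harmless, since it only amounts to swapping the two images or rescaling. Combining injectivity with surjectivity gives the claimed $1:1$ correspondence.
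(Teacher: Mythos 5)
Your proof is correct, but it takes a different route from the paper only in the sense that the paper gives no argument at all: its proof of \cref{thr:fundforms_and_camera_pairs_are_1:1} is a one-line citation of Theorem 9.10 in \cite{hartleyzisserman}, whereas you reprove that result from scratch via the canonical pair $P_1=[\,I\mid 0\,]$, $P_2=[\,[e]_\times M\mid e\,]$. The two halves of your argument are sound: well-definedness and injectivity do follow immediately from \cref{lem:fundamental_form} once forms are taken up to scale, and your surjectivity computation is the standard one (the identity $[e]_\times^2=ee^\top-\|e\|^2I$ together with $e^\top M=0$ gives $F_P=-\|e\|^2M$, and the rank count for $P_2$ via $\im(M)=e^\perp$ and $e\notin e^\perp$ is right --- note that this last step genuinely needs $e$ real and nonzero, as you observe, since an isotropic complex $e$ would break it, and reality is exactly what the paper's real pinhole model supplies). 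What your version buys is a self-contained proof that makes visible precisely where the rank-two and reality hypotheses enter, and it even implicitly confirms the two constructed centers are distinct (otherwise $F_P$ could not be a nonzero rank-two form); what the paper's citation buys is brevity. One small point worth making explicit if you wanted to polish this: the correspondence in the theorem statement is between ideals and forms \emph{up to scalar}, which you do say, but it deserves emphasis since the literal statement of the theorem omits it.
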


\subsection{Critical quadrics}
Since the multi-view ideal is generated by a bilinear form and since such a form pulls back to the strict transform of a quadric surface passing through the two camera centers in $\p3$, critical configurations for two views need to lie on the strict transform of quadrics (recall \cref{cor:critical_configurations_lie_on_quadrics_and_cubics}). Furthermore, since the bilinear forms vanish in the two epipoles, the quadrics in question need to contain a certain pair of lines (the pullback of each epipole):

\begin{definition}
\label{def:permissible_lines}
Let $p_1,p_2$ be two camera centers, and let $S_P$ be a quadric surface containing both camera centers. A pair of real (not necessarily distinct) lines $\gp{1}{2},\gp{2}{1}$ are called \emph{permissible} if they satisfy the following properties.
\begin{enumerate}
\item The line $\gp{i}{j}$ lies on $S_P$ and passes through $p_i$.
\item Any point lying on both $\gp{i}{j}$ and $\gp{j}{i}$ is a singular point on $S_P$.
\item Any point in the singular locus of $S_P$ which lies on one of the lines also lies on the other.
\item If $S_P$ is the union of two planes, $\gp{i}{j}$ and $\gp{j}{i}$ lie in the same plane.
\end{enumerate}
\end{definition}

This definition is similar to the definition of permissible lines in \cite[Definition 5.11]{HK}, but differs in that it allows for permissible lines in the case where the quadric $S_P$ is a double plane. On the double plane, every point is singular, so two lines are permissible only if they are both equal to the line spanned by the camera centers. \cref{fig:critical_quadrics} shows all quadrics containing at least one pair of permissible lines.

\begin{figure}[]
\begin{center}
\includegraphics[width = \textwidth]{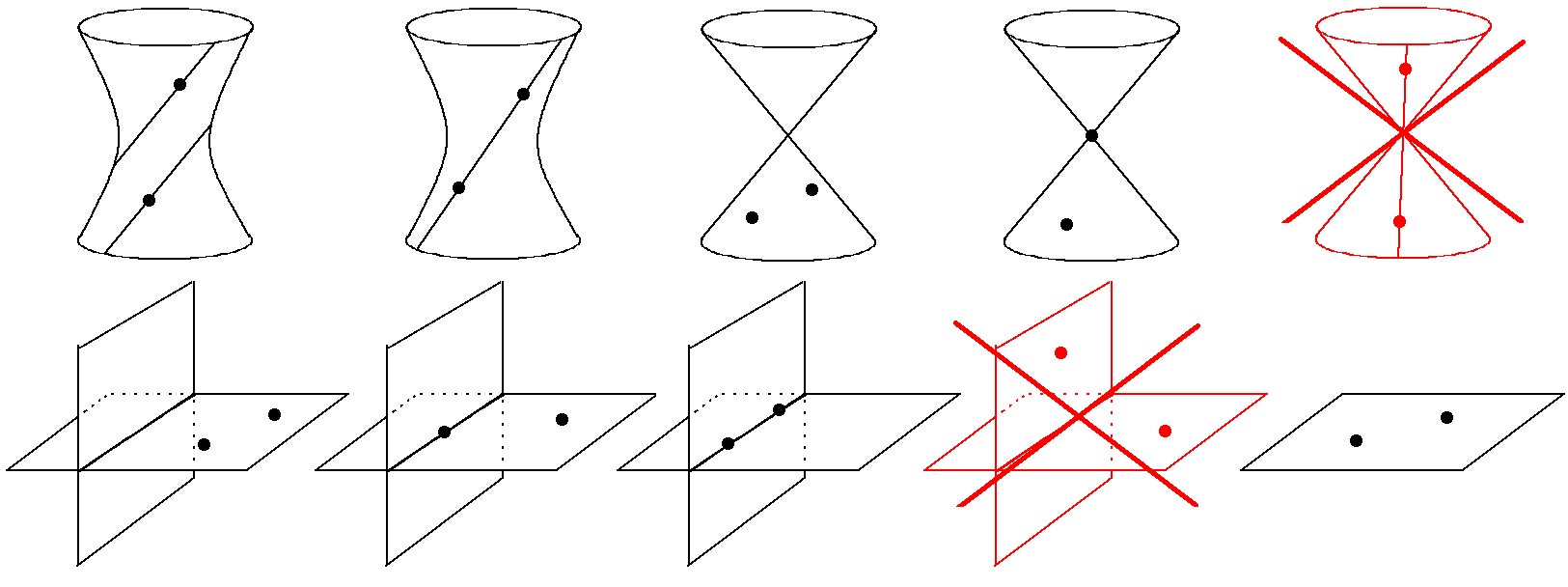}
\end{center}
\caption{Illustration showing all critical quadrics with two marked camera centers, i.e. those containing a pair of permissible lines. The ones marked in red are not critical and do not contain such a pair of lines.}\label{fig:critical_quadrics}
\end{figure}

\begin{theorem}[{\cite[Theorem 4.11]{twoViews}}]
\label{thr:critical_two_views}
Let $(P_1,P_2,X)$ be a configuration of cameras and points. The configuration is critical if and only if there exists a real quadric $S_P\in\p3$ containing both camera centers $p_i$ and a pair of permissible lines, whose strict transform in $\Bl_{\textbf{P}}(\p3)$ contains all the points $X$. 
\end{theorem}
\cite[Theorem 4.11]{twoViews} is formulated slightly differently, requiring the quadrics to be among those in \cref{tab:configurations_and_their_conjugates} rather than to be ones containing two permissible lines. It can easily be checked that these two conditions are equivalent.

\begin{remark}
\label{rem:permissible_are_pullback_of_epipoles}
The theorem requires the quadric to contain a pair of permissible lines for the configuration to be critical. These two lines are the pre-images of the two epipoles $e_{Q_1}^{2},e_{Q_2}^{1}$ of $F_Q$ (where $F_Q$ denotes the fundamental form of the two cameras in the conjugate configuration).
\end{remark}

\begin{proposition}[{\cite[Proposition 5.3]{twoViews}}]
\label{lem:permissible_lines_and_conjugates}
Let $S_P$ be a quadric, and let $P_1,P_2$ be two cameras with camera centers $p_1,p_2$, not both lying in the singular locus of $S_P$. Let $\widetilde{S_P}$ denote the strict transform of $S_P$ in $\Bl_{\textbf{P}}(\p3)$. There is a 1:1 correspondence between pairs of permissible lines on $S_P$, and configurations conjugate to $(P_1,P_2,\widetilde{S_P})$.
\end{proposition}

\begin{table}[]
\begin{tabular}{@{}p{0.54\textwidth}p{0.22\textwidth}p{0.16\textwidth}@{}}
\toprule
\textbf{Quadric $S_P$}      & \textbf{Conjugate quadric} & \textbf{Conjugates}\\
\midrule
Smooth quadric, cameras not on a line       & Same    & 2 \\ \midrule
Smooth quadric, cameras on a line           & Cone, cameras not on a line & 1\\ \midrule
Cone, cameras not on a line & Smooth quadric, cameras on a line           & 1\\ \midrule
Cone, one camera at vertex, other one not & Same  & $\infty$ \\ \midrule
Two planes, cameras in the same plane               & Same   &$\infty$ \\ \midrule
Two planes, one camera on the singular line         & Same   &$\infty$ \\ \midrule
Two planes, cameras on the singular line       & Same   &$\infty$ \\ 
\midrule
A double plane, cameras in the plane     & Same   &$\infty$ \\
\bottomrule
\end{tabular}
\caption{A list of all possible critical configurations and their conjugates, as well as the number of conjugates for each configuration. Taken from {\cite[Table 1]{twoViews}}}
\label{tab:configurations_and_their_conjugates}
\end{table}
Since we will be working with these quadrics extensively throughout the rest of the paper, it makes sense to distinguish the quadrics that give rise to a critical configuration from the ones that do not:

\begin{definition}
Given two cameras $P_1,P_2$, a \emph{critical quadric} is a quadric $S_P$ along with two permissible lines $g_{P_1}^{2}$ and $g_{P_2}^{1}$ such that $(P_1,P_2,\widetilde{S_P})$ is a critical configuration.
\end{definition}

\begin{remark}
\label{rem:quadric_minus_lines}
In \cref{sec:background}, we defined critical configurations to lie in $\Bl_{\textbf{P}}(\p3)$ rather than $\p3$ to ensure that the maximal critical configurations would be closed varieties. Had we defined them to lie in $\p3$, the critical configurations for two views would not be quadrics as above, but rather the quadrics minus the permissible lines. 
\end{remark}

\subsection{Relations between quadrics}
By \cref{thr:critical_two_views}, any critical configuration $(P_1,P_2,X)$ needs to lie on the strict transform of a quadric $S_P$. Similarly, the conjugate configuration $(Q_1,Q_2,Y)$ needs to lie on the strict transform of a quadric (denoted $S_Q$). Each point $x\in S_P$ has a conjugate point $y\in S_Q$ such that $\widetilde{\phi_\textbf{P}}(x)=\widetilde{\phi_\textbf{Q}}(y)$. We give some relations between these conjugate pairs.

\begin{proposition}[{\cite[Table 1]{HK} or {\cite[Table 1]{twoViews}}}]
\label{prop:what_are_the_conjugates}
When the quadric $S_P$ is a hyperboloid with two camera centers on the same line, the conjugate $S_Q$ is a cone with no camera center at the vertex, and vice versa. In all other cases, the quadrics $S_P$ and $S_Q$ are of the same type (see \cref{fig:critical_quadrics} for types). 
\end{proposition}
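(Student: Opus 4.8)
The plan is to reduce the whole statement to a \emph{rank} comparison for two explicit symmetric $4\times4$ matrices. By \cref{cor:critical_configurations_lie_on_quadrics_and_cubics}, the critical quadric $S_P$ is the pullback under $\widetilde{\phi_P}=P_1\times P_2$ of the conjugate multi-view variety $\im(\widetilde{\phi_Q})=V(F_Q)$. Writing $F_Q$ as a $3\times3$ matrix (\cref{lem:fundamental_form}), this says that $S_P$ is cut out by the quadratic form $X\mapsto (P_1X)^{\top}F_Q(P_2X)$, i.e. by the symmetric matrix $M_P=P_1^{\top}F_QP_2+P_2^{\top}F_Q^{\top}P_1$; symmetrically $S_Q$ is cut out by $M_Q=Q_1^{\top}F_PQ_2+Q_2^{\top}F_P^{\top}Q_1$. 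Since a quadric surface is a hyperboloid, a cone, a pair of planes, or a double plane exactly when its Gram matrix has rank $4,3,2$ or $1$, the entire proposition becomes a comparison of the rank of $M_P$ with the rank of $M_Q$.

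The next step is to read the kernels off geometrically. As $P_2^{\top}$ is injective, $M_PX=0$ forces $P_2X$ into the right null space of $F_Q$ and $P_1X$ into the left null space; by \cref{eq:epipole_is_kernel} these are the epipoles $e_{Q_2}^{1}$ and $e_{Q_1}^{2}$. Hence $\ker M_P$ is controlled by the permissible lines $g_{P_2}^{1}=P_2^{-1}(e_{Q_2}^{1})$ and $g_{P_1}^{2}=P_1^{-1}(e_{Q_1}^{2})$, which both recovers \cref{rem:permissible_are_pullback_of_epipoles} and shows that the rank of $M_P$ is $4$ iff these two lines are skew, is $3$ iff they meet in a single point (which is then the vertex), and is at most $2$ iff they coincide. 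The identical computation on the conjugate side shows that the permissible lines of $S_Q$ are $Q_2^{-1}(e_{P_2}^{1})$ and $Q_1^{-1}(e_{P_1}^{2})$, where $e_{P_2}^{1}=P_2(p_1)$ and $e_{P_1}^{2}=P_1(p_2)$.

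It remains to compare the two incidence patterns, and the geometric input here is that for two views the baseline $\ell$ through $p_1,p_2$ is always contracted by $\widetilde{\phi_P}$ to a single point, the epipole pair (\cref{lem:joint-camera_map_is_isomorphism}); symmetrically the baseline through $q_1,q_2$ is contracted by $\widetilde{\phi_Q}$. Away from these contracted baselines the correspondence $x\mapsto y$ defined by $\widetilde{\phi_P}(x)=\widetilde{\phi_Q}(y)$ is biregular, so it matches the smooth and singular loci of $S_P$ with those of $S_Q$; comparing singular loci then forces the two surfaces to have the same type, which settles every row of \cref{tab:configurations_and_their_conjugates} except the smooth one with collinear centers. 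In that remaining case $S_P$ is smooth but the baseline $\ell$ lies on it and is contracted, so its single $\widetilde{\phi_Q}$-preimage is an isolated singular point of $S_Q$; hence the rank of $M_Q$ drops to $3$, making $S_Q$ a cone, and its vertex—being the image of $\ell$ rather than of $q_1$ or $q_2$—carries no camera center. The ``vice versa'' is the same argument with $P$ and $Q$ interchanged.

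The step I expect to be the main obstacle is exactly this special smooth $\leftrightarrow$ cone case: one must check that the rank of $M_Q$ drops to \emph{exactly} $3$ and not to $2$, and that the new vertex genuinely avoids the conjugate camera centers. I would establish both by a local analysis at the contracted baseline—verifying that its $\widetilde{\phi_Q}$-preimage is a single reduced point at which the conjugation map degenerates only to first order—rather than by tracking the entries of $M_Q$ directly, since $F_P$ and $F_Q$ are themselves coupled through the shared images and a brute-force rank computation is unpleasant.
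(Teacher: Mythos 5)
First, note that the paper does not itself prove this proposition: it is imported from Table 1 of \cite{HK} and \cite{twoViews}, so there is no internal argument to compare yours against; the in-paper tools it would naturally rest on are \cref{prop:singular_quadric_one_side_means_line_on_the_other} and \cref{prop:bidegree_on_P_->_bidegree_on_Q}. Your overall strategy (classify the type by the rank of the Gram matrix, then track what the conjugation correspondence contracts) is reasonable, and your mechanism for the exceptional exchange---the baseline lying on $S_P$ is contracted to a new singular point of $S_Q$---is exactly the right one. But the central computation is broken. From $M_PX=P_1^{\top}F_QP_2X+P_2^{\top}F_Q^{\top}P_1X=0$ you cannot conclude $F_QP_2X=0$ and $F_Q^{\top}P_1X=0$: injectivity of $P_1^{\top}$ and $P_2^{\top}$ is not enough, because their images are two $3$-dimensional subspaces of $\mathbb{C}^{4}$ meeting in a $2$-dimensional subspace, so the two terms can cancel without vanishing separately. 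The conclusion is in fact false: when $S_P$ is a pair of planes, $\ker M_P$ is the $2$-dimensional space over the singular line, whereas the locus you describe ($F_QP_2X=F_Q^{\top}P_1X=0$) is the intersection $g_{P_1}^{2}\cap g_{P_2}^{1}$, generically a single point; and \cref{def:permissible_lines} explicitly allows the two permissible lines on a plane pair to be distinct (they need only meet on the singular line), so the criterion ``rank $\le 2$ iff the lines coincide'' is wrong. What your computation actually establishes is only the inclusion $g_{P_1}^{2}\cap g_{P_2}^{1}\subseteq S_P^{\mathrm{sing}}$, i.e.\ one direction of condition 2 in \cref{def:permissible_lines}, not a rank formula.

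The second gap is the step ``away from the contracted baseline the correspondence is biregular, so it matches smooth and singular loci.'' The conjugation also contracts the two permissible lines of $S_P$ to the camera centers $q_1,q_2$ on $S_Q$ (\cref{prop:singular_quadric_one_side_means_line_on_the_other}), and a camera center can itself be a singular point of the conjugate quadric (the row ``cone, one camera at the vertex'' of \cref{tab:configurations_and_their_conjugates} is such a case). So the case analysis cannot be reduced to the single exceptional baseline: you must also rule out type changes caused by the contracted permissible lines, and explain why every singular point of $S_Q$ other than $g_{Q_1}^{2}\cap g_{Q_2}^{1}$ pulls back to a singular point of $S_P$. Until the rank dichotomy is repaired and these extra contracted curves are accounted for, neither half of the proposition is established; the quickest fix is to abandon the matrix-kernel route and argue directly with \cref{prop:singular_quadric_one_side_means_line_on_the_other} together with a case check against \cref{def:permissible_lines} for each quadric type in \cref{fig:critical_quadrics}.
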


\cref{thr:critical_two_views} and \cref{lem:permissible_lines_and_conjugates} provide a complete classification of all critical configurations for two views as well as the number of conjugates of each one. These results, together with \cref{prop:what_are_the_conjugates}, are summarized in \cref{tab:configurations_and_their_conjugates}

By \cref{lem:permissible_lines_and_conjugates}, there exists one conjugate configuration for each pair of permissible lines. The choice of lines impacts the relation between points on $\widetilde{S_P}$ and its conjugate $\widetilde{S_Q}$. For the next three results, assume a choice of permissible lines $\gp{1}{2},\gp{2}{1}$ on $S_P$ has been made, and let $\widetilde{\gp{1}{2}}$ and $\widetilde{\gp{2}{1}}$ denote their strict transforms.

\begin{proposition}[{\cite[Lemma 5.4]{twoViews}}]
The map taking a point $x\in\widetilde{S_P}$ to its conjugate is a birational map, defined everywhere except the intersection $\widetilde{\gp{1}{2}}\cap\widetilde{\gp{2}{1}}$.
\end{proposition}

\begin{proposition}[{\cite[Section 5.2]{twoViews}}]
\label{prop:singular_quadric_one_side_means_line_on_the_other}
Points on the strict transform of the line $\gp{i}{j}$ on $\widetilde{S_P}$ map to the exceptional divisor (camera center on the blow-down) $E_{q_j}$ on $\widetilde{S_Q}$. Points lying on the line spanned by $\widetilde{p_1},\widetilde{p_2}$ on $\widetilde{S_P}$ map to the intersection of the two permissible lines $\gq{1}{2},\gq{2}{1}$ on $\widetilde{S_Q}$ (a point which by \cref{def:permissible_lines} is singular).
\end{proposition}

This gives us 2 (or 3, if the line spanned by $p_1$, $p_2$ is contained in $S_P$) curves on $\widetilde{S_P}$ which collapse to points on $\widetilde{S_Q}$. All other lines/curves are mapped to lines/curves.

\begin{definition}
\label{def:type_of_curve}
Let $S_P$ be an irreducible critical quadric and let $C_P$ be a curve on $S_P$. We say that $C_P$ is of type $(a,b,c_1,c_2)$, where $c_i$ is the multiplicity of $C_P$ in the camera center $p_i$ and where $a$ and $b$ are:
\begin{itemize}
\item If $S_P$ is smooth, $a$ is the number of times $C_P$ intersects a generic line in the same family as the permissible lines $g_{P_i}^{j}$, and $b$ is the number of times it intersects the lines in the other family. (meaning $(a,b)$ is the bidegree of $C_P$).
\item If $S_P$ is a cone, $a$ is the number of times $C_P$ intersects each line outside of the vertex, and $b$ the number of times it intersects each line.
\end{itemize}
\end{definition}

\begin{proposition}[{\cite[Proposition 5.6]{twoViews}}]
\label{prop:bidegree_on_P_->_bidegree_on_Q}
Let $S_P$ be a smooth quadric or a cone, and let $C_P\subset \widetilde{S_P}$ be the strict transform of a curve of type $(a,b,c_1,c_2)$ not containing either of the permissible lines. Then the conjugate curve $C_Q\subset \widetilde{S_Q}$ is the strict transform of a curve of type $(a,a+b-c_1-c_2,a-c_2,a-c_1)$.
\end{proposition}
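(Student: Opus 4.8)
The plan is to realize the conjugacy as a birational correspondence between the two smooth quadrics and compute each entry of the type of $C_Q$ as an intersection number on $S_Q$, pulling everything back to $S_P$ where the type of $C_P$ is known. Write $S_P\cong\mathbb{P}^1\times\mathbb{P}^1$ with Picard basis $h_P,g_P$, where $h_P$ is the class of the permissible ruling (so $\gp{1}{2},\gp{2}{1}\in|h_P|$) and $g_P$ is the other ruling, and similarly $h_Q,g_Q$ on $S_Q$; recall $h^2=g^2=0$ and $h\cdot g=1$. By definition of the type, $a=C_P\cdot h_P$, $b=C_P\cdot g_P$, and $c_i$ is the multiplicity of $C_P$ at $p_i$, so $C_P$ has class $bh_P+ag_P$ and degree $a+b$ (intersecting with the hyperplane class $h_P+g_P$). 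The conjugacy map $\psi\from S_P\dashrightarrow S_Q$ sending each point to its conjugate is the composite of the two embeddings of \cref{lem:joint-camera_map_is_isomorphism} restricted to $S_P,S_Q$, hence birational; by \cref{prop:singular_quadric_one_side_means_line_on_the_other} it collapses $\gp{1}{2}\mapsto q_2$ and $\gp{2}{1}\mapsto q_1$, and the same proposition applied with $P,Q$ interchanged tells us that the permissible lines $\gq{i}{j}$ on $S_Q$ collapse to $p_j$. I would compute the four invariants of $C_Q$ one at a time using this dictionary.

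First, the multiplicities. Since $\gp{2}{1}$ collapses to $q_1$, every intersection point of $C_P$ with $\gp{2}{1}$ away from the camera center $p_2$ produces a branch of $C_Q$ through $q_1$. As $\gp{2}{1}\in|h_P|$ we have $C_P\cdot\gp{2}{1}=C_P\cdot h_P=a$, of which the local intersection concentrated at $p_2$ equals the multiplicity $c_2$; the remaining $a-c_2$ branches give the multiplicity of $C_Q$ at $q_1$, so $c_1'=a-c_2$. Applying the symmetric argument to $\gp{1}{2}\mapsto q_2$ gives $c_2'=a-c_1$.

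The main obstacle is determining how $\psi$ acts on the two rulings, since a birational map need not preserve them. Here I would argue as follows. A generic permissible line $\ell\subset S_P$ is disjoint from $\gp{1}{2},\gp{2}{1}$ (lines in one ruling of a smooth quadric are pairwise disjoint), so by the multiplicity count above its image $\psi(\ell)$ has multiplicity $0$ at both $q_1,q_2$; being the image of a line under a birational map it is irreducible, and a degree computation (its image in $\pxp$ has bidegree $(1,1)$, so $\deg Q_i(\psi(\ell))=1$) shows $\psi(\ell)$ is a line. To rule out that it lies in the other ruling of $S_Q$, I apply the multiplicity formula in reverse: an other-ruling line on $S_Q$ has $\psi^{-1}$-image passing through both $p_1,p_2$ with multiplicity $1$, contradicting that $\ell$ avoids the centers. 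Hence $\psi$ sends the permissible ruling of $S_P$ to the permissible ruling of $S_Q$. Running the same analysis on a generic other-ruling line of $S_P$ (type $(1,0,0,0)$) gives $c_1'=c_2'=1$ and degree $2$, so its image is a $(1,1)$-conic through both $q_1,q_2$; equivalently, $\psi^{-1}$ of a generic other-ruling line of $S_Q$ is a conic $D_P$ of class $h_P+g_P$ through $p_1,p_2$.

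Finally I assemble $a'$ and $b'$ as intersection numbers on $S_Q$, transported to $S_P$ through the birational correspondence (rigorously via the graph of $\psi$ and the projection formula). For $a'=C_Q\cdot h_Q$, the preimage of a generic permissible line of $S_Q$ is a generic permissible line of $S_P$, which avoids $p_1,p_2$, so all $C_P\cdot h_P=a$ intersection points survive the correspondence and $a'=a$. For $b'=C_Q\cdot g_Q$, the preimage of a generic other-ruling line is the conic $D_P$; here
\begin{equation*}
C_P\cdot D_P=(bh_P+ag_P)\cdot(h_P+g_P)=a+b,
\end{equation*}
but the contributions $c_1$ and $c_2$ at the collapsed centers $p_1,p_2$ do not meet a generic other-ruling line of $S_Q$ (they map into the exceptional locus), leaving $b'=a+b-c_1-c_2$. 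Combining the four computations yields that $C_Q$ has type $(a,\,a+b-c_1-c_2,\,a-c_2,\,a-c_1)$, as claimed. I expect the ruling-correspondence step to be the delicate point, and the cleanest safeguard against transversality issues is to phrase $a',b'$ throughout as intersection numbers rather than naive point counts, so that the identities hold at the level of divisor classes even in special position.
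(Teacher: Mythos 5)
Your argument is essentially correct, but note that the paper itself gives no proof of this proposition: it is imported verbatim from the companion two-view paper (\cite[Proposition 5.4]{twoViews}), so there is no in-text argument to compare yours against. Your reconstruction --- realizing the conjugacy as a birational map $\psi\from S_P\dashrightarrow S_Q$, reading the multiplicities $a-c_2$ and $a-c_1$ off the collapse of the permissible lines onto the camera centers (\cref{prop:singular_quadric_one_side_means_line_on_the_other}), matching the two rulings, and then computing $a'$ and $b'$ as intersection numbers of $C_P$ with the $\psi$-preimages of the two rulings of $S_Q$ --- is the natural intersection-theoretic route, and the bookkeeping ($C_P\sim bh_P+ag_P$, $D_P\sim h_P+g_P$, so $C_P\cdot D_P=a+b$ with $c_1+c_2$ absorbed at the centers) is right. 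Phrasing everything as divisor-class computations, as you do at the end, is indeed the correct safeguard.

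Two points need tightening. First, you silently assume $S_Q\cong\p1\times\p1$, but the hypothesis is only that $S_P$ is smooth: by \cref{prop:what_are_the_conjugates}, when $p_1,p_2$ lie on a common line of $S_P$ the conjugate $S_Q$ is a cone, and your Picard-basis setup on $S_Q$ must then be replaced by the reinterpretation of types on a cone given in the sentence following the proposition (equivalently, one works on the minimal resolution $\mathbb{F}_2$, where the permissible ruling degenerates to the lines through the vertex). Second, the step asserting that the local intersection of $C_P$ with $g_{P_2}^{1}$ concentrated at $p_2$ equals $c_2$ holds only when no branch of $C_P$ at $p_2$ is tangent to $g_{P_2}^{1}$; in the tangent case the excess local intersection is compensated by additional branches of $C_Q$ at $q_1$ arising from the exceptional divisor over $p_2$ (which maps onto the line $g_{Q_1}^{2}$ through $q_1$), so the formula survives, but your count of branches away from $p_2$ does not literally compute the multiplicity and should be restated in terms of local intersection numbers on the blow-up. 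Neither issue invalidates the approach; both are needed to cover the statement in the generality in which the paper uses it.
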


\begin{definition}
Let $S_P$ be two planes with two camera centers not both lying on the intersection and let $C_P$ be a curve on $S_P$. We say that $C_P$ is of type $(a,b,c_0,c_1,c_2)$, where $a$ is the degree of the curve in the plane with the epipolar lines, $b$ the degree of the curve in the other plane, $c_0$ is the multiplicity of $C_P$ in the intersection of the epipolar lines $g_{P_i}^{j}$ and $c_1,c_2$ is the multiplicity in the camera centers $p_1,p_2$ respectively. 
\end{definition}

\begin{proposition}[{\cite[Proposition 5.8]{twoViews}}]
\label{prop:bidegree_on_P_->_bidegree_on_Q_planes}
Let $S_P$ be two planes with two camera centers not both lying on the intersection and let $C_P\subset S_P$ be a curve of type $(a,b,c_0,c_1,c_2)$ such that $C_P$ does not contain either of the epipolar lines or the lines spanned by the camera centers. Then the conjugate curve $C_Q\subset S_Q$ is of type $(2a-c_0-c_1-c_2,b,a-c_1-c_2,a-c_0-c_2,a-c_0-c_1)$.
\end{proposition}

\section{Preliminaries for the three-view case}
\label{sec:preliminary_three_views}
We start with the observation that any critical configuration for three views is critical for each pair of views.
\begin{theorem}
\label{thr:critical_only_if_critical_for_fewer_views}
Let $(P_1,\ldots,P_n,X)$ be a critical configuration, and let $\mathcal{P}\subset\Set{P_1,\ldots,P_n}$ be a nonempty subset. We get the following commutative diagram:
\begin{align*}
% https://tikzcd.yichuanshen.de/#N4Igdg9gJgpgziAXAbVABwnAlgFyxMJZARgBpiBdUkANwEMAbAVxiRAB120BmEAX1LpMufIRQAGclVqMWbTgCEGAfWCccMAB44ARgDNgABT58AFJx4BKfoJAZseAkUnjp9Zq0Qd2Adyyw8Blg1Lm4+fmkYKABzeCJQPQAnCABbJAAmahwIJDIZDzZomwTktMQ87KRJfLkvPWKQJNSMrJzEavda7zQsCL4gA
\begin{tikzcd}[ampersand replacement=\&]
\Bl_{\textbf{P}}(\p3) \arrow[d, "g_P"] \arrow[rd, "\pi"] \&     \\
\Bl_{\mathcal{P}}(\p3) \arrow[r, "f"]             \& \p3
\end{tikzcd}
\end{align*}
The configuration $(\mathcal{P},g_P(X))$ is either a maximal critical configuration or a subconfiguration of a maximal critical configuration.

\end{theorem}
\begin{proof}
Since $(P_1,\ldots,P_n,X)$ is a critical configuration, there exists a conjugate configuration $(Q_1,\ldots,Q_n,Y)$. Each camera $P_i$ has a conjugate camera $Q_i$. Let $\mathcal{Q}\subset\Set{Q_1,\ldots,Q_n}$ be the set of cameras conjugate to $\mathcal{P}$. Then the cameras $\mathcal{P}$ along with the points 
\begin{align*}
\widetilde{\phi_\mathcal{P}}^{-1}(\im(\widetilde{\phi_\mathcal{P}})\cap\im(\widetilde{\phi_\mathcal{Q}}))
\end{align*}
is a critical configuration with 
\begin{align*}
(\mathcal{Q},\widetilde{\phi_\mathcal{Q}}^{-1}(\im(\widetilde{\phi_\mathcal{P}})\cap\im(\widetilde{\phi_\mathcal{Q}})))
\end{align*} 
as a conjugate. Finally, since $(\mathcal{P},g_P(X))$ and $(\mathcal{Q},g_Q(Y))$ (where $g_Q$ is defined similarly to $g_P$) produce the same image in $(\p2)^{m}$, $g_P(X)$ is either equal to or a subset of $\mathcal{P}^{-1}(\im(\widetilde{\phi_\mathcal{P}})\cap\im(\widetilde{\phi_\mathcal{Q}}))$.
\end{proof}

\begin{definition}
Let $(P_1,\ldots,P_n,X)$ be a configuration of cameras and points, and let $\Set{S_P^{ij}}$ be a set of $n\choose2$ quadrics. Let $\widetilde{S_P^{ij}}$ denote the surface we get in $\Bl_{\textbf{P}}(\p3)$ when taking the strict transform of $S_P^{ij}$ under the blow-up of the camera centers $p_i,p_j$, and the total transform under the remaining blow-ups. The configuration $(P_1,\ldots,P_n,X)$ is said to \emph{lie on the intersection of the quadrics $\Set{S_P^{ij}}$} if each quadric $S_P^{ij}$ contains the camera centers $p_i,p_j$, and $\widetilde{S_P^{ij}}$ contains $X$.
\end{definition}
 
\begin{corollary}
\label{cor:critical_lies_on_intersection_of_quadrics}
For $n\geq2$ views, the critical configurations always lie on the intersection of $n\choose2$ critical quadrics. 
\end{corollary}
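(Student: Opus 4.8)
The plan is to derive this corollary directly from the two results established above, namely \cref{thr:critical_only_if_critical_for_fewer_views} and the two-view classification \cref{thr:critical_two_views}. The underlying idea is that criticality for $n$ views forces criticality on every pair of views, and each critical pair is pinned down by a critical quadric; running over all pairs then yields the stated count of $\binom{n}{2}$ quadrics.

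First I would fix a critical configuration $(P_1,\ldots,P_n,X)$ and run over all $\binom{n}{2}$ unordered pairs of indices $(i,j)$. For each such pair, applying \cref{thr:critical_only_if_critical_for_fewer_views} to the subset $\textbf{P}=\Set{P_i,P_j}$ shows that $(\Set{P_i,P_j},X)$ is a subconfiguration of a maximal critical configuration for two views. Next I would invoke \cref{thr:critical_two_views}: since this two-view configuration is critical, there exists a quadric $S_P^{ij}$ containing the two camera centers $p_i,p_j$, all the points of $X$, and a pair of permissible lines. By definition this quadric together with its permissible lines is a critical quadric.

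As $(i,j)$ ranges over all pairs we obtain exactly $\binom{n}{2}$ critical quadrics, and because each $S_P^{ij}$ contains $p_i,p_j$ together with all of $X$, the configuration lies on their intersection in the sense of the definition preceding the statement. The counting of pairs and the containment of camera centers and points are immediate once the two cited theorems are in hand, so the argument is short.

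The only point requiring care is the degenerate situation already flagged in \cref{thr:critical_only_if_critical_for_fewer_views}, where restricting to a single pair may collapse the two conjugate configurations into the same $\PGL(4)$-orbit. This is precisely why that theorem is phrased in terms of being a \emph{subconfiguration} of a maximal critical configuration rather than being critical in isolation: even when the restricted pair is no longer critical on its own, it still embeds into a maximal critical two-view configuration, so \cref{thr:critical_two_views} continues to supply the desired critical quadric $S_P^{ij}$. I expect no further obstacle, since every ingredient beyond this edge case is purely formal.
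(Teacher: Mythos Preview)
Your argument is correct and follows exactly the approach of the paper, which simply says the corollary ``follows from \cref{thr:critical_only_if_critical_for_fewer_views} along with the fact that for two views, the critical configurations lie on critical quadrics.'' You have just unpacked this one-line proof in detail, including a careful treatment of the edge case where restriction to a pair might collapse the conjugates into the same $\PGL(4)$-orbit; your handling of this via the ``subconfiguration of a maximal critical configuration'' phrasing is precisely what the paper's formulation of \cref{thr:critical_only_if_critical_for_fewer_views} is designed for.
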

\begin{proof}
This follows from \cref{thr:critical_only_if_critical_for_fewer_views} along with the fact that for two views, the critical configurations lie on critical quadrics. It also follows directly from \cref{cor:critical_configurations_lie_on_quadrics_and_cubics}.
\end{proof}

In particular, for the three-view case, the critical configurations lie on the intersection of three critical quadrics. The task of classifying the critical configurations for three views, however, is not as simple as just classifying the possible intersections of the strict transforms of three quadrics. Even if a configuration is critical for each pair of views, it need not be critical for all three views. There are two issues, namely the \emph{compatibility of the quadrics}, and the \emph{ambiguous points}. Before we tackle these issues, let us state a result on the generators of the multi-view ideal for three views.

\begin{lemma}
\label{lem:three_bilinear_and_10_trilinear}
For the three cameras, the multi-view ideal contains a three-dimensional family of bilinear- and a ten-dimensional family of trilinear forms.
\end{lemma}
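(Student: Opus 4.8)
The plan is to compute each graded piece of the multi-view ideal directly, realising the forms in question as pullbacks under the joint camera map $\widetilde{\phi_P}$. Writing each camera $P_i$ as a triple of linear forms $L_1^{(i)},L_2^{(i)},L_3^{(i)}$ on $\p3$, full rank of $P_i$ means these span the $3$-dimensional space $V_i$ of linear forms vanishing at the camera center $p_i$. A trilinear form of multidegree $(1,1,1)$, say $T=\sum_{a,b,c}t_{abc}\,x_ay_bz_c$ with $x,y,z$ the three sets of image coordinates, pulls back (as in \cref{cor:critical_configurations_lie_on_quadrics_and_cubics}) to the cubic $\widetilde{\phi_P}^{*}T=\sum_{a,b,c}t_{abc}\,L_a^{(1)}L_b^{(2)}L_c^{(3)}$ on $\p3$, and since the multi-view ideal is by definition the ideal of forms vanishing on $\im(\widetilde{\phi_P})$, the form $T$ lies in it exactly when this cubic vanishes identically. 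Thus the trilinear part of the ideal is the kernel of the linear map
\[
\rho\colon T\longmapsto \widetilde{\phi_P}^{*}T,
\]
whose source has dimension $3^3=27$, and it suffices to show that its image---the span $V_1V_2V_3$ of all products $L_a^{(1)}L_b^{(2)}L_c^{(3)}$---has dimension $17$, yielding a kernel of dimension $27-17=10$.

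For the image I would show that $V_1V_2V_3$ is precisely the space of cubics vanishing at $p_1,p_2,p_3$. The inclusion $\subseteq$ is clear, and since three distinct points impose independent conditions on cubics this target has dimension $\binom{6}{3}-3=17$. For the reverse inclusion I would fix convenient coordinates: taking $p_1=[1:0:0:0]$, $p_2=[0:1:0:0]$, $p_3=[0:0:1:0]$ gives $V_1=\langle w_1,w_2,w_3\rangle$, $V_2=\langle w_0,w_2,w_3\rangle$, $V_3=\langle w_0,w_1,w_3\rangle$, and a short check shows that every degree-$3$ monomial other than $w_0^3,w_1^3,w_2^3$ is itself a single admissible product $\ell_1\ell_2\ell_3$, so all $17$ are hit. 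The bilinear case is handled identically: for a pair $(i,j)$ the $(1,1,0)$-type part of the ideal is the kernel of the analogous map with $9$-dimensional source and image $V_iV_j$, the space of quadrics through $p_i,p_j$, of dimension $\binom{5}{2}-2=8$; the kernel is therefore $1$-dimensional and spanned by the fundamental form of \cref{lem:fundamental_form}. Since the three fundamental forms occupy distinct multidegrees they are automatically independent, giving a $3$-dimensional family of bilinear forms in total.

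The step I expect to demand the most care is exactly this surjectivity of the multiplication map---that $V_1V_2V_3$ fills the whole $17$-dimensional space rather than a proper subspace, uniformly over all legitimate positions of the camera centers. For non-collinear centers this is essentially the projective normality of the embedding furnished by \cref{lem:joint-camera_map_is_isomorphism}, but the degenerate configurations (collinear or coincident centers, where that lemma no longer yields an embedding) would have to be treated separately, either by redoing the monomial computation in coordinates adapted to the degeneracy or by a semicontinuity argument. As a consistency check, the $10$-dimensional trilinear family visibly contains the reducible forms obtained by multiplying each fundamental form by a linear coordinate of the remaining image, together with the genuine trifocal relations; once the surjectivity is in hand the counts $27-17=10$ and $9-8=1$ are purely mechanical and the lemma follows.
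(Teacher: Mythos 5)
Your argument is correct, but it is a genuinely different route from the paper's: the paper proves this lemma purely by citation, invoking \cite[Definition 2.1]{Tomas} and \cite[Theorem 3.7]{Tomas}, where the multigraded Hilbert function of the multi-view ideal is computed in general, and simply reads off the dimensions $3$ and $10$ for three cameras. You instead give a self-contained dimension count: the graded piece of the ideal in each multidegree is the kernel of the pullback map $\rho$, and the counts $9-8=1$ (per pair, hence $3$ in total since the three fundamental forms sit in distinct multidegrees) and $27-17=10$ follow once the multiplication maps $V_i V_j$ and $V_1V_2V_3$ are shown to surject onto the quadrics through two centers and the cubics through three centers, respectively. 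Your coordinate verification of that surjectivity is sound for distinct, non-collinear centers, and you are right to flag the collinear case as the point needing separate care --- note that the count must still come out to $10$ there, consistently with \cref{lem:generators_of_the_MVI}, where the two linear syzygies among the fundamental forms reduce their span in degree $(1,1,1)$ from $9$ to $7$ and three extra trilinear generators are needed. What your approach buys is transparency: it makes explicit exactly where the position of the camera centers enters and exhibits the $10$-dimensional space concretely as a kernel; what the citation buys is brevity and the authority of the general Hilbert-function computation, which covers all degenerate configurations of distinct centers uniformly without a case-by-case monomial check.
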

\begin{proof}
The first part is simply an application of \cref{thr:ideal_of_multiview_variety} to the case of three cameras. To show that the ideal contains a ten-dimensional family of trilinear forms, note that there is a 27-dimensional family of trilinear forms on $\pxpxp$, which pulls back to a 17-dimensional family of cubics passing through the three camera centers (20 cubics, less 3 for the constraint of passing through the camera centers). Hence a 10-dimensional family of trilinear forms pull back to the zero polynomial and thus lie in the multi-view ideal.
\end{proof}

\begin{lemma}
\label{lem:syzygies_iff_colinear}
Let $P_1,P_2,P_3$ be three cameras, and let $F_P^{12},F_P^{13},F_P^{23}$ be their fundamental forms. 
\begin{enumerate}
\item The camera centers are collinear if and only if there are exactly two linear syzygies between the fundamental forms. 
\item The camera centers are not collinear if and only if there are no linear syzygies between the fundamental forms
\end{enumerate}
\end{lemma}
\begin{proof}
\begin{enumerate}
\item Let $P_1,P_2,P_3$ be three cameras whose cameras all lie on a line $l$, and let $\Pi_1$ and $\Pi_2$ be two distinct planes passing through $l$. These planes are mapped to lines in each image, let $f_{P_i}^{j}$ be the polynomial describing the line $P_i(\Pi_j)$, and let $F$ be the matrix
\begin{align*}
F=\begin{bmatrix}
f_{P_1}^{1}&f_{P_2}^{1}&f_{P_3}^{1}\\
f_{P_1}^{2}&f_{P_2}^{2}&f_{P_3}^{2}
\end{bmatrix}.
\end{align*}
Under the joint camera map of the first two cameras $\phi_{P_1,P_2}$, the minor $f_{P_1}^{1}f_{P_2}^{2}-f_{P_1}^{2}f_{P_2}^{1}$ pulls back to the zero polynomial, the only polynomial with this property is the fundamental form $F_P^{12}$. Hence the three minors of $F$ are exactly the fundamental forms $F_P^{ij}$. Since the fundamental forms can be written as the minors of a $2\times3$ matrix, there are two linear syzygies between them.

For the converse, let there be two linear syzygies between the three fundamental forms. This means they can be written as the $2\times2$ minors of a $2\times 3$ matrix
\begin{align*}
F'=\begin{bmatrix}
a&b&c\\
a'&b'&c'
\end{bmatrix}.
\end{align*}
We then have that $F_{P}^{12}=\det\begin{pmatrix}
a&b\\
a'&b'
\end{pmatrix}$ and $F_{P}^{13}=\det\begin{pmatrix}
a&c\\
a'&c'
\end{pmatrix}$. Setting the first column in the first matrix to zero gives us the epipole $e_{P_1}^{2}$ and doing so for the second matrix gives us the epipole $e_{P_1}^{3}$, but since these columns are equal, then so are the epipoles, hence the camera centers must lie on a line. 

\item Assume there exists at least one linear syzygy between the $F_P^{ij}$, and let $\textbf{x}$, $\textbf{y}$, and $\textbf{z}$ be the coordinates in the first, second, and third image respectively. Then there exist three linear polynomials $f_i$ such that
\begin{align*}
f_3(\textbf{z})F_P^{12}(\textbf{x},\textbf{y})+f_2(\textbf{y})F_P^{13}(\textbf{x},\textbf{z})+f_1(\textbf{z})F_P^{23}(\textbf{y},\textbf{z})=0
\end{align*}
This holds for all $\textbf{x}$, $\textbf{y}$ and $\textbf{z}$, in particular, if $\textbf{z}=e_{P_3}^{2}$, we get 
\begin{align*}
f_3(e_{P_3}^{2})F_P^{12}(\textbf{x},\textbf{y})=-f_2(\textbf{y})F_P^{13}(\textbf{x},e_{P_3}^{2}).
\end{align*}
If $F_P^{13}(\textbf{x},e_{P_3}^{2})$ was nonzero, this would imply that $F_P^{12}$ is reducible, and hence of rank 1, meaning it is not a fundamental form. Hence $F_P^{13}(\textbf{x},e_{P_3}^{2})$ must be zero. But this means that $e_{P_3}^{1}=e_{P_3}^{2}$, which means the camera centers are collinear. The converse, that the $F_P^{ij}$ have at least one linear syzygy if the camera centers are collinear follows from item 1. \qedhere
\end{enumerate}
\end{proof}

\begin{proposition}
\label{prop:generators_of_the_MVI}
If the three camera centers $p_1,p_2,p_3$ are not collinear, the multi-view ideal is generated by the fundamental forms along with one trilinear form. If the camera centers lie on a line, the ideal is generated by the fundamental forms along with three trilinear forms.
\end{proposition}
\begin{proof}
By \cref{lem:three_bilinear_and_10_trilinear} the multi-view ideal contains three fundamental forms. Each of these can be used to generate a 3-dimensional family of trilinear forms by multiplying it with the missing variable. By \cref{lem:syzygies_iff_colinear}, if the camera centers are not collinear, there are no linear syzygies between the fundamental forms, so they generate a 9-dimensional family of trilinear forms in the ideal. In this case, we need only add one trilinear form to the set of generators. 

On the other hand, if the camera centers are collinear, we get two linear syzygies, so they generate only a 7-dimensional family of trilinear forms. In this case, we need to add three trilinear forms to get the necessary generators.
\end{proof} 

\subsection{Compatible triples of quadrics}
\label{sec:compatible_triples_of_quadrics}
For three views, the multi-view ideal is generated by three fundamental forms, namely the ones we get from each pair of cameras (as well 1 or 3 trilinear forms, which we consider later). In the two-view case, any fundamental form (bilinear form of rank 2) comes from some pair of cameras (\cref{thr:fundforms_and_camera_pairs_are_1:1}). In the three-view case, however, three arbitrary fundamental forms will generally not come from some triple of cameras. This motivates the following definition:

\begin{definition}
Let $\Set{F_Q^{ij}\mid 1\leq i\leq j\leq n}$ be a set of $n\choose 2$ fundamental forms. The forms are said to be \emph{compatible} if there exist $n$ cameras $Q_1,...,Q_n$ such that $F_Q^{ij}$ is the fundamental form of $Q_i,Q_j$.
\end{definition}

\begin{definition}
Given $n$ cameras $P_1,...,P_n$ we say a set of quadrics $\Set{S_P^{ij}}$ is \emph{compatible} (with respect to the cameras $P_i$), if they are the pullback (with $\phi_{\textbf{P}}$) of a compatible set of fundamental forms.
\end{definition}

For a configuration $(P_1,P_2,P_3,X)$ to be critical, it is not enough that it lies on the intersection of three critical quadrics, the three quadrics also need to be compatible. Only then can we find three conjugate cameras $Q_1,Q_2,Q_3$ which in turn can be used to construct a conjugate configuration. A detailed study of compatible fundamental forms can be found in \cite{compatibility,hartleyzisserman}. Recall (from (\ref{eq:epipole_is_kernel})) that the \emph{epipoles} $e_{Q_i}^{j}$ are the unique points satisfying
\begin{align*}
F_Q^{12}(e_{Q_1}^{2},-)=F_Q^{12}(-,e_{Q_2}^{1})=0.
\end{align*}

\begin{theorem}[{\cite[Proposition 5.8]{compatibility}}]
\label{thr:compatible_forms_collinear}
Let $F_Q^{12}$, $F_Q^{13}$, $F_Q^{23}$ be a triple of rank 2 bilinear forms, and let $e_{Q_i}^{j}$ be their epipoles. Then the triple is compatible and comes from a collinear triple of cameras if and only if
\begin{align}
\label{eq:collinear}
e_{Q_1}^{2}=e_{Q_1}^{3}, \quad e_{Q_2}^{1}=e_{Q_2}^{3}, \quad e_{Q_3}^{1}=e_{Q_3}^{2},
\end{align}
and
\begin{align}
\label{eq:compatible_collinear}
(F_Q^{12})^{T}[e_{Q_1}^{2}]_\times F_Q^{13}=F_Q^{23},
\end{align}
where the product is taken to be matrix multiplication of the matrix representation of the fundamental forms, and where $[a]_{\times}$ denotes the unique $3\times3$ matrix such that $[a]_{\times}b=a\times b$ for all $b$.
\end{theorem} 

\begin{theorem}[{\cite[Section 15.4]{hartleyzisserman}}]
\label{thr:compatible_forms_noncollinear}
Let $F_Q^{12}$, $F_Q^{13}$, $F_Q^{23}$ be a triple of rank 2 bilinear forms, and let $e_{Q_i}^{j}$ be their epipoles. Then the triple is compatible and comes from a non-collinear triple of cameras if and only if
\begin{align}
\label{eq:non_collinear}
e_{Q_1}^{2}\neq e_{Q_1}^{3}, \quad e_{Q_2}^{1}\neq e_{Q_2}^{3}, \quad e_{Q_3}^{1}\neq e_{Q_3}^{2},
\end{align}
and
\begin{align}
\label{eq:compatible_non_collinear}
F_Q^{12}(e_{Q_1}^{3},e_{Q_2}^{3})=F_Q^{13}(e_{Q_1}^{2},e_{Q_3}^{2})=F_Q^{23}(e_{Q_2}^{1},e_{Q_3}^{1})=0.
\end{align}
\end{theorem} 

Since we are working with quadrics in $\p3$, we want to translate these conditions on the fundamental forms, to conditions on the quadrics. The remainder of this subsection is used to give two results stating these conditions. We start with the collinear case, that of \cref{thr:compatible_forms_collinear} (clarification: this is NOT the case where the camera centers $p_i$ are collinear, but the case where the camera centers $q_i$ of the conjugate configuration are collinear).

\begin{proposition}[\textbf{Collinear case}]
\label{prop:compatible_collinear}
Let $P_1$, $P_2$, $P_3$ be a triple of cameras, and let $S_P^{12}$, $S_P^{13}$, $S_P^{23}$ be a triple of critical quadrics. Then the three quadrics come from a triple of fundamental forms that satisfy the conditions in \cref{thr:compatible_forms_collinear}, if and only if there exist three lines $L_{1}$, $L_{2}$, $L_{3}$ satisfying:
\begin{enumerate}
\item $p_i\in L_i$.
\item $L_i\subseteq S_P^{ij}\cap S_P^{ik}$.
\item $L_i$ and $L_j$ form a permissible pair on $S_P^{ij}$.
\end{enumerate}
and any point $x\in S_P^{12}\cap S_P^{13}$ not lying on $L_1$ also lies on $S_P^{23}$.
\end{proposition}
\begin{proof}
($\Rightarrow$):
Let $S_P^{12}$, $S_P^{13}$, $S_P^{23}$ be a compatible (in the collinear sense) triple of critical quadrics. This means they are the pullback of a triple of fundamental forms satisfying the conditions in \cref{thr:compatible_forms_collinear}. By \cref{thr:critical_two_views}, each quadric contains a pair of permissible lines, the pullback of the epipoles. By \cref{eq:collinear} the two epipoles coincide in each image, so the two lines passing through $p_i$ have to coincide as well. This gives us 3 lines satisfying the three conditions above. 

Next, let $x$ be any point in $S_P^{12}\cap S_P^{13}$ not lying on $L_1$, and denote $x_i=P_i(x)$. Since $x$ lies on $S_P^{1i}$, $F_Q^{1i}x_i$ is the unique line in the first image spanned by $x_1$ and the epipole $e_1^{2}=e_1^{3}$. Now, since $F_Q^{12}x_2$ and $F_Q^{13}x_3$ define the same line, we must have
\begin{align*}
0&=x_2^{T}(F_Q^{12})^{T}[e_{Q_1}^{2}]_\times F_Q^{13}x_3,\\
&=x_2^{T}F_Q^{23}x_3,\\
&=x^{T}P_2'^{T}F_Q^{23}P_3'x,\\
&=x^{T}S_P^{23}x.
\end{align*}
It follows that $x$ lies on $S_P^{23}$ as well

($\Leftarrow$):
Let $S_P^{ij}$ be three quadrics satisfying the conditions above. Since $L_i$ and $L_j$ form a permissible pair on $S_P^{ij}$, there exists (by \cref{lem:permissible_lines_and_conjugates}) bilinear forms $F_Q^{ij}$ of rank 2 such that $S_P^{ij}$ is the pullback (under $\phi_{\textbf{P}}$) of $F_Q^{ij}$, and the lines $L_i,L_j$ are the pullback of the epipoles of $F_Q^{ij}$ (see \cref{rem:permissible_are_pullback_of_epipoles}). Since the line $L_i$ is the pullback of both the epipole $e_{Q_i}^{j}$ and the epipole $e_{Q_i}^{k}$, the two epipoles are equal. Hence, the epipoles satisfy \cref{eq:collinear}. 

Let $S_P^{23'}$ denote the quadric defined by the polynomial
\begin{align*}
x^TP_2'^{T}F_Q^{12T}[e_{Q_1}^{2}]_\times F_Q^{13}P_1'x.
\end{align*}
For the next part, we want to show that the $S_P^{23'}=S_P^{23}$. If so, the three quadrics $S_P^{ij}$ are the pullback of the three fundamental forms $F_Q^{12},F_Q^{13}$ and $F_Q^{12T}[e_{Q_1}^{2}]_\times F_Q^{13}$, since this triple of fundamental forms is compatible, so is the triple of quadrics. By assumption, the set $(S_P^{12}\cap S_P^{13})-L_1$ lies on all three quadrics. If the quadrics do not contain a common surface, this set is a (possibly reducible) cubic curve $C$. The curve $C$ lies on both $S_P^{23}$ and $S_P^{23'}$. Moreover, since $F_Q^{12T}[e_{Q_1}^{2}]_\times F_Q^{13}$ has the same left and right kernels as $F_Q^{23}$, $S_P^{23'}$ must also contain the two lines $L_2,L_3$. The union of a cubic curve with two lines uniquely determines a quadric, so we have $S_P^{23}=S_P^{23'}$. A similar argument can be made if the quadrics contain a common surface, and in particular if they are equal.
\end{proof}
\color{black}

Among other things, this shows that as long as the cameras are not in the position shown in \cref{fig:impossible_configurations}, three copies of the same quadric form a compatible triple. For the remainder of the section, we assume that compatible triples of quadrics come from triples of bilinear forms that satisfy the non-collinear conditions (\cref{eq:compatible_non_collinear,eq:non_collinear}), rather than \cref{eq:compatible_collinear,eq:collinear}.

\begin{theorem}[\textbf{General case}]
\label{thr:compatible_quadrics}
Let $P_1,P_2,P_3$ be a triple of cameras, and let $S_P^{12}$, $S_P^{13}$, $S_P^{23}$ be a triple of critical quadrics. Then the three quadrics come from a triple of fundamental forms that satisfy the conditions in \cref{thr:compatible_forms_noncollinear}, if and only if there exist six lines $g_{P_i}^{j}$ such that:
\begin{enumerate}
\item $g_{P_i}^{j}$ and $g_{P_j}^{i}$ form a permissible pair on $S_P^{ij}$ (see \cref{def:permissible_lines}).
\item The intersection of the plane spanned by $g_{P_i}^{j}$ and $g_{P_i}^{k}$ with the plane spanned by $g_{P_j}^{i}$ and $g_{P_j}^{k}$ lies on $S_P^{ij}$.
\end{enumerate}
\end{theorem}

\begin{proof}
($\Rightarrow$):
If the triple of quadrics is compatible, they are the pullback (under $\phi_{\textbf{P}}$) of a compatible triple of rank 2 bilinear forms. The pullback of a rank 2 bilinear form will always be a quadric with two permissible lines, one through each camera center. Hence, the first part is satisfied.

Among the six permissible lines, there are 2 lines on each quadric and 2 through each camera center. By (\ref{eq:non_collinear}), for each camera center, the two lines passing through it are distinct. Hence the two lines through each camera center $p_i$ span a plane, which we denote by $\Pi_i$.

Let $y$ be any point lying in the intersection of $\Pi_{i}$ and $\Pi_j$. Recall \cref{rem:permissible_are_pullback_of_epipoles} which tells us that $P_i'(g_{P_i}^{j})=e_{Q_i}^{j}$. We then have
\begin{align*}
P_i'(y)=\alpha e_{Q_i}^{j}+\beta e_{Q_i}^{k},\\
P_j'(y)=\gamma e_{Q_j}^{i}+\delta e_{Q_j}^{k}.
\end{align*}
for some $[\alpha:\beta],[\gamma:\delta]\in\p1$. Hence
\begin{align*}
S_P^{ij}(y) &=F_Q^{ij}(P_i'(y),P_j'(y))\\
&=F_Q^{ij}(\alpha e_{Q_i}^{j}+\beta e_{Q_i}^{k},\gamma e_{Q_j}^{i}+\delta e_{Q_j}^{k})\\
&=F_Q^{ij}(\beta e_{Q_i}^{k},\delta e_{Q_j}^{k})\\
&=0,
\end{align*}
(final equality follows from (\ref{eq:compatible_non_collinear})) which proves that $y$ lies on $S_P^{ij}$. Hence $S_P^{ij}$ contains the whole intersection of $\Pi_i,\Pi_j$.

($\Leftarrow$): For the converse, assume that the quadrics contain 6 lines satisfying the two conditions. Since $1.$ is satisfied, there exists a $F_Q^{ij}$ such that $S_P^{ij}$ is the pullback of $F_Q^{ij}$, and the lines $g_{P_i}^{j}$ and $g_{P_j}^{i}$ are the pullbacks of the epipoles of $F_Q^{ij}$ (by \cref{rem:permissible_are_pullback_of_epipoles} and \cref{lem:permissible_lines_and_conjugates}). Denote the plane spanned by $\gp{i}{j}$ and $\gp{i}{k}$ by $\Pi_i$.

Let $x$ and $y$ be the two points where $\Pi_i\cap\Pi_j$ intersects $\gp{i}{k}$ and $\gp{j}{k}$ respectively (in the case where $\Pi_i\cap\Pi_j$ is a plane, let $x$ and $y$ be any two points lying on $\gp{i}{k}$ and $\gp{j}{k}$ respectively). If $x$ and $y$ are equal, we get
\begin{align*}
0&=S_P^{ij}(x)\\
&=F_Q^{ij}(P_i'(x),P_j'(y)\\
&=F_Q^{ij}(e_{Q_i}^{k},e_{Q_j}^{k}),
\end{align*}
proving that $F_Q^{ij}$ satisfies (\ref{eq:compatible_non_collinear}). On the other hand, if they are different, they span a line lying in $\Pi_i\cap\Pi_j$, which in turn lies in $S_P^{ij}$. In other words:
\begin{align*}
\alpha x+\beta y \in S_{P}^{ij}
\end{align*}
for all $[\alpha:\beta]\in\p1$. Now
\begin{align*}
P_i'(x)&=e_{Q_i}^{k} &&P_j'(x)=\gamma' e_{Q_j}^{i}+\delta' e_{Q_j}^{k}\\
P_i'(y)&=\gamma e_{Q_i}^{j}+\delta e_{Q_i}^{k} &&P_j'(y)=e_{Q_j}^{k}
\end{align*}
for some $[\gamma:\delta],[\gamma':\delta']\in\p1$, so
\begin{align*}
0&=S_P^{ij}(\alpha x+\beta y)\\
&=F_Q^{ij}(P_i'(\alpha x+\beta y),P_j'(\alpha x+\beta y))\\
&=F_Q^{ij}(\alpha e_{Q_i}^{k}+\beta(\gamma e_{Q_i}^{j}+\delta e_{Q_i}^{k}),\alpha(\gamma' e_{Q_j}^{i}+\delta' e_{Q_j}^{k})+\beta e_{Q_j}^{k})\\
&=F_Q^{ij}((\alpha+\beta\delta)e_{Q_i}^{k},(\alpha\delta'+\beta)e_{Q_j}^{k})\\
&=(\alpha+\beta\delta)(\alpha\delta'+\beta)F_Q^{ij}(e_{Q_i}^{k},e_{Q_j}^{k}).
\end{align*}
This holds for all $[\alpha:\beta]\in\p1$. In particular, we can take $[\alpha:\beta]$ to be such that $(\alpha+\beta\delta)(\alpha\delta'+\beta)\neq0$, proving that $F_Q^{ij}$ satisfies (\ref{eq:compatible_non_collinear}). Hence, $S_P^{12}$, $S_P^{13}$, $S_P^{23}$ forms a compatible triple of quadrics in this case also.
\end{proof}

\subsection{Ambiguous points}
Having dealt with the first issue, the fact that not all triples of quadrics provide three conjugate cameras $Q_1,Q_2,Q_3$, we move on to the second issue, namely that even if a configuration $(P_1,P_2,P_3,X)$ lies on the intersection of three compatible, critical quadrics, it might not be a critical configuration.

By \cref{cor:critical_configurations_lie_on_quadrics_and_cubics} and \cref{prop:generators_of_the_MVI}, the set of critical points is the intersection of the strict transform of three quadrics and one or three cubic surfaces. Hence, there might be some points in the intersection $\widetilde{S_P^{12}}\cap \widetilde{S_P^{13}}\cap \widetilde{S_P^{23}}$ which are not critical.

\begin{proposition}
Given two triples of cameras $(P_1,P_2,P_3)$ and $(Q_1,Q_2,Q_3)$, and a point $x\in \widetilde{S_P^{12}}\cap \widetilde{S_P^{13}}\cap \widetilde{S_P^{23}}$. The three lines
\begin{align*}
l'_{Q_i}=\overline{Q_i'^{-1}(P_i(x))}\subset\p3
\end{align*}
either have a point in common or all lie in the same plane.
\end{proposition}
\begin{proof}
The surface $\widetilde{S_P^{ij}}$ is exactly the collection of points $x$ for which the two lines $l'_{Q_i}$ and $l'_{Q_j}$ intersect one another, so if $x\in \widetilde{S_P^{12}}\cap \widetilde{S_P^{13}}\cap \widetilde{S_P^{23}}$ each pair of lines $l'_{Q_i}$ will intersect. This leaves two options, the $l'_{Q_i}$ are either three lines lying in the same plane, or they have at least one point in common.
\end{proof}

\begin{definition}
Given two triples of cameras, the intersection $\widetilde{S_P^{12}}\cap \widetilde{S_P^{13}}\cap \widetilde{S_P^{23}}$ is the union of two subvarieties: the critical points, for which $l'_{Q_i}$ pass through a common point, and the \emph{ambiguous points}, for which the lines $l_{Q_i}$ are coplanar.
\end{definition}

\begin{remark}
The two subvarieties need not be disjoint, nor do either of them need to be a proper subvariety. For instance, when the camera centers $p_i$ are collinear, the ambiguous points form all of $\widetilde{S_P^{12}}\cap \widetilde{S_P^{13}}\cap \widetilde{S_P^{23}}$ and contain the critical points as a subvariety.
\end{remark}
\begin{remark}
The two subvarieties of $\widetilde{S_P^{12}}\cap \widetilde{S_P^{13}}\cap \widetilde{S_P^{23}}$ come from two subvarieties of $V(F_Q^{12},F_Q^{13},F_Q^{23})$. The critical points come from the multi-view variety, whereas the ambiguous points come from a subvariety that we also refer to as \emph{ambiguous}, making no distinction between $\Bl_{\textbf{P}}(\p3)$ and $\pxpxp$.
\end{remark}

The issue of these ambiguous points is well-known, and descriptions can be found in \cite[Section 15.3]{hartleyzisserman}, and, for a more reconstruction-oriented angle, in \cite[Section 6]{HK}. In this section, we recall a key result by Hartley and Kahl \cite[Theorem 6.17]{HK}, although approached from a more algebraic setting; we also provide a few additional results on the ambiguous points in the case where the cameras are collinear. 

\subsubsection{The non-collinear case}
Let $(Q_1,Q_2,Q_3)$ be three cameras whose camera centers are not collinear. Consider the ideal generated by the three fundamental forms:
\begin{align*}
I=(F_Q^{12},F_Q^{13},F_Q^{23}).
\end{align*} 
This is not the multi-view ideal, since it lacks the final trilinear form (\cref{prop:generators_of_the_MVI}), so $V(I)$ contains at least some points not lying in the multi-view variety. 

\begin{lemma}
\label{lem:V(FFF)_has_two_components}
When the three cameras are non-collinear, the multi-view variety is isomorphic to $\Bl_{\textbf{P}}(\p3)$, while the ambiguous subvariety is isomorphic to $\p1\times\p1\times\p1$. The latter is the product of the three lines we get when projecting the plane spanned by the camera centers $q_1,q_2,q_3$ onto each image.
\end{lemma}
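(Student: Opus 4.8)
The plan is to give the locus $V(I)$ a purely geometric description and read off its components directly. By \cref{lem:fundamental_form}, each form $F_Q^{ij}$ is the fundamental form of the pair $Q_i,Q_j$, and the determinant formula there shows that the vanishing $F_Q^{ij}(x_i,x_j)=0$ is exactly the epipolar constraint: it says that the back-projected lines $L_i=Q_i^{-1}(x_i)$ and $L_j=Q_j^{-1}(x_j)$, which pass through the centers $q_i$ and $q_j$ respectively, meet in $\p3$. Hence a triple $\mathbf{x}=(x_1,x_2,x_3)$ lies in $V(I)$ if and only if the three lines $L_1,L_2,L_3$ are pairwise incident.

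The key geometric input I would use is the elementary classification of three pairwise incident lines in $\p3$: either they share a common point, or they are coplanar. First I would treat the concurrent case: if $L_1,L_2,L_3$ meet in a common point $x$, then $\mathbf{x}=\widetilde{\phi_Q}(x)$ lies in the multi-view variety, and conversely every point of the multi-view variety arises this way (for a generic $x$ the lines $L_i=\overline{xq_i}$ all pass through $x$). Since the centers are non-collinear, \cref{lem:joint-camera_map_is_isomorphism} identifies this component with $\widetilde{\p3}$.

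Next I would treat the coplanar case. If $L_1,L_2,L_3$ are coplanar but have no common point, their common plane contains $q_1,q_2,q_3$; as these are non-collinear they span a unique plane, so the plane must be $\Pi=\langle q_1,q_2,q_3\rangle$. A line through $q_i$ lies in $\Pi$ precisely when its image $x_i=Q_i(L_i)$ lies on $\ell_i:=Q_i(\Pi)$, which is a line because $\Pi$ contains the center $q_i$. Thus the coplanar locus is contained in $\ell_1\times\ell_2\times\ell_3\cong\p1\times\p1\times\p1$. Conversely, any triple in $\ell_1\times\ell_2\times\ell_3$ back-projects to three lines all lying in the single plane $\Pi$, and two lines in a plane always meet, so all three epipolar constraints hold and the triple lies in $V(I)$. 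Hence the residual is exactly $\ell_1\times\ell_2\times\ell_3\cong\p1\times\p1\times\p1$.

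Finally I would assemble the pieces: set-theoretically $V(I)=\widetilde{\p3}\cup(\ell_1\times\ell_2\times\ell_3)$, both factors irreducible of dimension $3$. To see these are exactly the irreducible components it remains to check that neither contains the other, which is immediate: a point $x\notin\Pi$ gives a point of the multi-view variety with every $x_i\notin\ell_i$, hence outside $\ell_1\times\ell_2\times\ell_3$, while a coplanar-but-non-concurrent triple gives a point of $\ell_1\times\ell_2\times\ell_3$ that is not the image of any single space point, hence outside the multi-view variety. I expect the main obstacle to be the bookkeeping of degenerate boundary cases rather than the main argument: one must handle the classification of pairwise incident lines when two of them coincide, deal with image points at the epipoles where back-projection interacts with the exceptional divisors of $\widetilde{\p3}$, and, if a scheme-theoretic statement is wanted, verify reducedness along each component, for instance through a Chow-class computation in the cohomology of $(\p2)^3$ with the forms of multidegrees $(1,1,0),(1,0,1),(0,1,1)$, whose product class should split as the sum of the classes of the two $3$-folds. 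For the purposes of the paper the set-theoretic decomposition suffices.
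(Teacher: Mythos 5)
Your proposal is correct and follows essentially the same route as the paper's proof: interpret $V(I)$ as the locus of triples of pairwise incident back-projected lines, invoke the dichotomy that three pairwise incident lines in $\p3$ are either concurrent (giving the multi-view variety $\cong\widetilde{\p3}$) or coplanar (forced into the plane $\langle q_1,q_2,q_3\rangle$ by non-collinearity, giving $\ell_1\times\ell_2\times\ell_3\cong\p1\times\p1\times\p1$). Your additional verification that neither component contains the other is a small refinement the paper leaves implicit.
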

\begin{proof}
The first part is a restatement of \cref{lem:joint-camera_map_is_isomorphism}. As for the ambiguous subvariety, it corresponds to the case where the lines $l'_{Q_i}$ are coplanar. Since $l'_{Q_i}$ passes through $q_i$, the only plane containing these lines is the plane spanned by the camera centers $q_i$. The family of triples of lines lying in that plane describes exactly the $\p1\times\p1\times\p1$ above.
\end{proof}

Adding the final trilinear form to the ideal removes the ambiguous subvariety and leaves only the multi-view variety.

So how does this affect the critical configurations? Let $(P_1,P_2,P_3)$ be a triple of cameras, different from $(Q_1,Q_2,Q_3)$. The pullback (using $\widetilde{\phi_\textbf{P}}$) of the multi-view ideal $\im(\widetilde{\phi_\textbf{Q}})$ is the set of critical points. The pullback of $V(I)$ is the intersection of the strict transforms of three quadrics. This intersection is the union of the critical points and the ambiguous points.

\begin{lemma}
\label{lem:residual_is_intersection_of_three_planes}
Let $P_1$, $P_2$, $P_3$, and $Q_1$, $Q_2$, $Q_3$ be two triples of cameras, such that the camera centers $q_i$ are not collinear. Then the ambiguous points on the $P$-side are the intersection of the strict transforms of three planes, one through each camera center, where the plane through the $i$-th camera center is:
\begin{align*}
\Pi_i&=\vspan(g_{P_i}^{j},g_{P_i}^{k})\\
&=\vspan(P_i^{-1}(e_{Q_i}^{j}),P_i^{-1}(e_{Q_i}^{k}))\\
&=\vspan(P_i^{-1}(Q_i(q_j)),P_i^{-1}(Q_i(q_k))
\end{align*} 
\end{lemma}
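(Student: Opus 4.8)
The plan is to exploit the fact that, by construction, the residual on the $P$-side is nothing but the preimage under $\widetilde{\phi_P}$ of the residual on the $Q$-side. By \cref{lem:V(FFF)_has_two_components}, since the centers $q_i$ are not collinear, the $Q$-side residual is the product $L_1\times L_2\times L_3\subset(\p2)^3$, where $\Sigma=\vspan(q_1,q_2,q_3)$ is the (genuine, by non-collinearity) plane through the three centers and $L_i=Q_i(\Sigma)$ is its image in the $i$-th view. Each $L_i$ is a line, because $Q_i$ projects from $q_i\in\Sigma$. So the first step is simply to write down $\widetilde{\phi_P}^{-1}(L_1\times L_2\times L_3)$ explicitly.

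For the core computation, I would note that a point $x$ satisfies $\widetilde{\phi_P}(x)\in L_1\times L_2\times L_3$ exactly when $P_i(x)\in L_i$ for every $i$. Each line $L_i$ is the zero locus of one linear form $\ell_i$ on the $i$-th copy of $\p2$; pulling this form back along the (linear) camera $P_i$ gives a linear form $\ell_i\circ P_i$ on $\p3$, whose zero locus is a plane $\Pi_i=P_i^{-1}(L_i)$ passing through the center $p_i$ (the preimage of a line under a projection centered at $p_i$ is swept out by the lines $P_i^{-1}(y)$, $y\in L_i$, all of which contain $p_i$). Imposing the three conditions simultaneously then yields that the $P$-side residual equals $\Pi_1\cap\Pi_2\cap\Pi_3$, an intersection of three planes, one through each camera center, as claimed.

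It remains to match $\Pi_i$ with the three spans in the statement. Here I would use the definition of the epipoles, $e_{Q_i}^{j}=Q_i(q_j)$, together with non-collinearity, to write $L_i=\vspan(Q_i(q_j),Q_i(q_k))=\vspan(e_{Q_i}^{j},e_{Q_i}^{k})$. Taking preimages commutes with spans for a projection once we include the center, so $\Pi_i=P_i^{-1}(L_i)=\vspan(P_i^{-1}(e_{Q_i}^{j}),P_i^{-1}(e_{Q_i}^{k}))$; finally \cref{rem:permissible_are_pullback_of_epipoles} identifies $P_i^{-1}(e_{Q_i}^{j})$ with the permissible line $g_{P_i}^{j}$, giving $\Pi_i=\vspan(g_{P_i}^{j},g_{P_i}^{k})$ and, after substituting $e_{Q_i}^{j}=Q_i(q_j)$ back, the last displayed form.

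The computation itself is short; the points requiring care are bookkeeping ones. First, I must make sure $\Sigma$ is a genuine plane and each $L_i$ a genuine line — this is precisely where the non-collinearity of the $q_i$ is used, and it is what distinguishes this case from the collinear one. Second, I should be careful that taking the preimage of the product of lines really does produce the intersection $\Pi_1\cap\Pi_2\cap\Pi_3$, and that, after blowing down, this is exactly the extra component of $S_P^{12}\cap S_P^{13}\cap S_P^{23}$ cut away by the trilinear form, with no stray contribution from the exceptional divisors of $\widetilde{\p3}$. I expect this blow-up and pullback bookkeeping, rather than any genuinely hard geometry, to be the main thing to get right.
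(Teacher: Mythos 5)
Your proposal is correct and follows essentially the same route as the paper: identify the $Q$-side residual as the product of the three lines $L_i=\vspan(e_{Q_i}^{j},e_{Q_i}^{k})$ via \cref{lem:V(FFF)_has_two_components}, pull each line back through $P_i$ to the plane spanned by $g_{P_i}^{j}$ and $g_{P_i}^{k}$ using \cref{rem:permissible_are_pullback_of_epipoles}, and intersect. You merely spell out the linear-algebra and blow-up bookkeeping that the paper leaves implicit.
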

\begin{proof}
By \cref{lem:V(FFF)_has_two_components}, the ambiguous subvariety in $V(I)$ is a $\p1\times\p1\times\p1$, the product of the three lines we get by projecting the plane spanned by $q_1,q_2,q_3$ onto each image. In the $i$-th image, this line is the line spanned by the two epipoles $e_{Q_i}^{j}$ and $e_{Q_i}^{k}$, whose pre-image (under $P_i$) is the strict transform of the plane spanned by the two lines $g_{P_i}^{j}$ and $g_{P_i}^{k}$ (recall \cref{rem:permissible_are_pullback_of_epipoles}). Hence, the ambiguous subvariety $\p1\times\p1\times\p1$ pulls back to the intersection of the strict transforms of three planes in $\widetilde{\p3}$.
\end{proof}

\begin{lemma}
\label{lem:remove_line_point_or_plane}
Let $P_1$, $P_2$, $P_3$, and $Q_1$, $Q_2$, $Q_3$ be two triples of cameras, such that the camera centers $q_i$ are not collinear. Then the ambiguous points on the $P$-side make up one of the following varieties:
\begin{itemize}
\item The strict transform of the plane spanned by the three camera centers $p_i$.
\item The strict transform of a line.
\item A single point.
\end{itemize}
In all cases, the variety they form is contained in $\widetilde{S_P^{12}}\cap \widetilde{S_P^{13}}\cap \widetilde{S_P^{23}}$.
\end{lemma}
\begin{proof}
By \cref{lem:residual_is_intersection_of_three_planes}, the ambiguous points are the intersection of the strict transform of three planes, one through each camera center, so they form either a point, a line, or a plane. In the event of a plane, it must be the one spanned by the camera centers. Furthermore, $\p1\times\p1\times\p1$ is a component of $V(I)$, so its pullback is contained in the pullback of $V(I)$, which is the intersection $\widetilde{S_P^{12}}\cap \widetilde{S_P^{13}}\cap \widetilde{S_P^{23}}$.
\end{proof}

\begin{theorem}[{\cite[{Theorem 6.17}]{HK}}]
\label{thr:compatible_implies_critical}
Let $P_1,P_2,P_3$ be a triple of cameras, let $S_P^{12}$, $S_P^{13}$, $S_P^{23}$ be a compatible (in the non-collinear sense) triple of quadrics, and let $X$ be the intersection of their strict transforms. The compatible triple of quadrics comes with six permissible lines, spanning three planes. Let $x$ be the intersection of the strict transforms of these three planes. Then the cameras $P_i$ along with the points in $\overline{X-x}$ constitute a critical configuration.
\end{theorem}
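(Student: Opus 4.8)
The plan is to construct an explicit conjugate configuration and show that the points of $\overline{X-x}$ have distinct, well-defined conjugate points on the $Q$-side. Since the triple $S_P^{12}, S_P^{13}, S_P^{23}$ is compatible in the non-collinear sense, by definition it is the pullback of a compatible triple of rank~2 bilinear forms $F_Q^{12}, F_Q^{13}, F_Q^{23}$, and by \cref{thr:compatible_forms} (using the non-collinear alternative (\ref{eq_non_collinear})) there exist three cameras $Q_1, Q_2, Q_3$ with non-collinear centers $q_i$ realizing these as their fundamental forms. The multi-view ideal of the $Q$-cameras is generated (\cref{lem:generators_of_the_MVI}) by the three fundamental forms together with one trilinear form. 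Pulling this ideal back along $\widetilde{\phi_P}$ gives the set of critical points $X'$ of $P$, whose conjugate is $(Q_1,Q_2,Q_3,Y')$ by \cref{prop:critical_points_give_critical_configuration}; our task is to identify $X'$ explicitly as $\overline{X-x}$.

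First I would pull back only the ideal $I=(F_Q^{12},F_Q^{13},F_Q^{23})$, whose vanishing locus is, by \cref{lem:V(FFF)_has_two_components}, the union of the multi-view variety $\im(\widetilde{\phi_Q})$ and the residual $\p1\times\p1\times\p1$. The three fundamental forms pull back exactly to the three quadrics $S_P^{12}, S_P^{13}, S_P^{23}$ (this is the defining property of the compatible triple of quadrics), so the pullback of $V(I)$ is precisely the intersection $S_P^{12}\cap S_P^{13}\cap S_P^{23}=X$. By \cref{lem:residual_is_intersection_of_three_planes} the residual $\p1\times\p1\times\p1$ pulls back to the intersection of the three planes $\Pi_i=\vspan(g_{P_i}^{j},g_{P_i}^{k})$ spanned by the six permissible lines, and by \cref{lem:remove_line_point_or_plane} this intersection is a point, line, or plane inside $X$; here it is the single point $x$ named in the statement. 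Thus $X$ decomposes as the pullback of the multi-view variety (the genuine set of critical points) together with the residual component through $x$.

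Next I would account for the one remaining trilinear generator. Adjoining it to $I$ cuts out exactly the multi-view ideal, removing only the residual $\p1\times\p1\times\p1$. Pulling back the full multi-view ideal therefore removes precisely the pullback of the residual, namely $x$, from $X$. Hence the set of critical points of $P$ is $\overline{X-x}$: every point of $\overline{X-x}$ maps under $\widetilde{\phi_P}$ into $\im(\widetilde{\phi_Q})$ and so has a well-defined preimage under $\widetilde{\phi_Q}$, giving it a conjugate point in $\widetilde{\p3}_Q$. Finally, invoking \cref{prop:critical_points_give_critical_configuration} with these sets of critical points $X'=\overline{X-x}$ and its conjugate $Y'$ on the $Q$-side shows that $(P_1,P_2,P_3,\overline{X-x})$ is a (maximal) critical configuration with conjugate $(Q_1,Q_2,Q_3,Y')$, which is the desired conclusion.

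The main obstacle I expect is the bookkeeping that identifies the residual pulled back to the $P$-side with exactly the single point $x$ described, rather than a line or plane, and verifying that removing this residual leaves genuinely distinct, non-collinear data on both sides so that $(\mathbf{P},\overline{X-x})$ and $(\mathbf{Q},Y')$ are truly non-equivalent under $\PGL(4)$. In other words, the delicate step is not the ideal-theoretic decomposition itself (which follows cleanly from Lemmas \ref{lem:V(FFF)_has_two_components}--\ref{lem:remove_line_point_or_plane}) but confirming that the residual component is the zero-dimensional locus $x=\Pi_1\cap\Pi_2\cap\Pi_3$ and that excising it does not inadvertently collapse the two configurations into the same $\PGL(4)$-orbit, which would void criticality.
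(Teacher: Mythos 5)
Your proposal is correct and follows essentially the same route as the paper's proof: both use compatibility to obtain three non-collinear cameras $Q_i$, identify the residual of $S_P^{12}\cap S_P^{13}\cap S_P^{23}$ as the intersection of the three planes spanned by the permissible lines via Lemmas \ref{lem:V(FFF)_has_two_components}--\ref{lem:remove_line_point_or_plane}, and conclude that $\overline{X-x}$ lies in the set of critical points. The only slight overreach is your parenthetical claim of maximality and your insistence that $x$ is a single point: the statement allows $x$ to be a point, line, or plane, and the paper's remark following the theorem notes that when $x$ is a plane the resulting configuration need not be maximal.
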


\begin{proof}
Since the three quadrics $S_P^{ij}$ are compatible, there exist three non-collinear cameras $Q_1,Q_2,Q_3$ giving rise to these quadrics. Now by \cref{lem:remove_line_point_or_plane}, this means that all of $X$, with the possible exception of a point or the strict transform of a line or plane, lies in the set of critical points. This point/line/plane is exactly the intersection of the three planes spanned by the six lines. Hence the set $X-x$ lies in the set of critical points. By continuity, this is also the case for $\overline{X-x}$, meaning that the cameras $P_i$ along with the points in $\overline{X-x}$ constitute a critical configuration.
\end{proof}

\begin{remark}
In the case where $x$ is the strict transform a plane, it might happen that the configuration $(P_1,P_2,P_3,\overline{X-x})$ is not a maximal critical configuration, as the intersection of the critical and ambiguous points need do not lie in the closure of $X-x$. For instance, in the case where the three quadrics are all equal to the union of two planes, $x$ is the strict transform of the plane containing the camera centers. $(P_1,P_2,P_3,\overline{X-x})$ is not a maximal critical configuration in this case, as there is a conic curve in $x$ which also lies in the set of critical points (see \cref{prop:plane+conic_critical}).
\end{remark}

\cref{thr:compatible_implies_critical} tells us all we need to classify the (non-collinear) critical configurations for three views. We need only check in what ways three compatible quadrics can intersect, and then remove the strict transform of a point/line/plane if needed. In particular, if the intersection $S_P^{12}\cap S_P^{13}\cap S_P^{23}$ does not contain a plane, a line, or any isolated points, the whole intersection is part of a critical configuration.

\subsubsection{The collinear case}
We now consider the other case, where $(P_1,P_2,P_3)$ and $(Q_1,Q_2,Q_3)$ are two triples of cameras, such that the camera centers $q_1,q_2,q_3$ lie on a line. In this case,
\begin{align*}
V(I)=V(F_Q^{12},F_Q^{13},F_Q^{23})\subset\pxpxp
\end{align*} 
is still the union of the same two subvarieties: the multi-view variety, and the ambiguous one. In this case, however, the latter is not a $\p1\times\p1\times\p1$ but rather all of $V(I)$, containing the multi-view variety as a divisor. Indeed, whenever the lines $l'_{Q_i}$ intersect in a point, the three lines are coplanar, since the camera centers $q_i$ lie on a line.

Since the set of critical points is contained in the ambiguous points, it is not enough to simply remove a point, line, or plane. It follows that \cref{thr:compatible_implies_critical} does not hold in the case where the cameras on the other side are collinear. Unlike the non-collinear case, the permissible lines on the quadrics $S_P^{ij}$ are not enough to determine the critical points. We do, however, have the following result, which holds for both the collinear and non-collinear cases:
\begin{proposition}
\label{prop:reconstructible_codim_1_in_residual}
The intersection of the multi-view variety with the ambiguous points forms a codimension 1 subvariety of the latter.
\end{proposition}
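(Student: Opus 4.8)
The plan is to prove this for both the collinear and non-collinear cases by analyzing the fiber structure of the joint camera map $\widetilde{\phi_Q}$ restricted to the residual. The statement asserts that the multi-view variety, intersected with the residual component of $V(I)$, is a codimension 1 subscheme of that residual. The key geometric fact to exploit is the characterization from the proof of \cref{lem:V(FFF)_has_two_components}: a point $y=(y_1,y_2,y_3)\in V(I)$ lies in the multi-view variety precisely when the three back-projected lines $l_i=Q_i^{-1}(y_i)$ all pass through a common point in $\p3$, whereas it lies in the residual when the three lines are merely coplanar. So the multi-view variety meets the residual exactly at those configurations where the three coplanar lines happen to be concurrent. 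My strategy is to count the number of conditions this concurrency imposes.

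First I would treat the non-collinear case. Here the residual is $\p1\times\p1\times\p1$ (\cref{lem:V(FFF)_has_two_components}), a 3-dimensional variety, and the three lines $l_i$ all lie in the fixed plane $\Pi$ spanned by the camera centers $q_1,q_2,q_3$. Working inside $\Pi\cong\p2$, each $l_i$ is a line through the fixed point $q_i\in\Pi$, so each $l_i$ varies in a $\p1$ (the pencil of lines through $q_i$), matching the three $\p1$ factors. Two generic lines in $\p2$ always meet in a point; the residual-versus-multiview distinction is whether the third line passes through that intersection point. Concurrency of three lines in $\p2$ is a single determinantal condition (the vanishing of the $3\times 3$ determinant whose rows are the line coordinates), and one checks this determinant is not identically zero on $\Pi$, since the camera centers are non-collinear so generic lines through them are not concurrent. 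This single nontrivial equation cuts out a codimension 1 subscheme of $\p1\times\p1\times\p1$, which is exactly the intersection with the multi-view variety.

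For the collinear case the argument is structurally the same but the bookkeeping differs, since as noted in the text the multi-view variety is itself \emph{contained} in the residual and the residual is no longer a product of three lines. Here I would again back-project: the three lines $l_i$ are automatically coplanar (the camera centers $q_i$ being collinear forces coplanarity once each pair meets), so every point of $V(I)$ outside the generic locus satisfies the pairwise incidence conditions, and the residual is the full intersection of the three quadrics. The multi-view locus is cut out inside the residual by the additional concurrency requirement, equivalently by the two extra trilinear generators of \cref{lem:generators_of_the_MVI}; since the residual is already the whole $S_P^{12}\cap S_P^{13}\cap S_P^{23}$ of dimension one less than ambient $(\p2)^3$-slice being considered, I would verify that the concurrency condition drops the dimension by exactly one rather than more.

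\textbf{The main obstacle} will be making the scheme-theoretic (as opposed to merely set-theoretic) codimension 1 claim precise, and handling the collinear case uniformly. In the collinear case the multi-view variety sits inside the residual, so I must rule out the degenerate possibility that the concurrency condition is either vacuous (giving codimension 0) or over-determined (giving codimension $\geq 2$); this requires checking that the relevant trilinear forms restrict to a nonzerodivisor on the residual's coordinate ring, which is the delicate transversality point. I would expect to resolve this by exhibiting the explicit concurrency determinant in local coordinates on the residual and confirming it is a single nonvanishing equation of the expected degree, so that it defines a proper, everywhere-codimension-1 subscheme.
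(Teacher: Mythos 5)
Your non-collinear argument is essentially sound and close in spirit to the paper's: the paper also identifies the residual with the $\p1\times\p1\times\p1$ of triples of lines through the $q_i$ inside the plane they span, and then shows the intersection with the multi-view variety is a 2-fold; where you cut the concurrency locus out by a single non-identically-vanishing determinant, the paper instead observes that the concurrent triples are parametrized (birationally) by their common point, i.e.\ by $\p2$, and concludes codimension $1$ by a direct dimension count. Either route works for the non-collinear half.

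The genuine gap is in the collinear case, where you defer exactly the point at issue: you write that you ``would verify that the concurrency condition drops the dimension by exactly one rather than more,'' but that verification \emph{is} the proposition, and no mechanism for carrying it out is given. You also shift the ambient object: the proposition concerns the residual component of $V(I)\subset(\p2)^3$, not its pullback $S_P^{12}\cap S_P^{13}\cap S_P^{23}\subset\p3$, and your appeal to ``the two extra trilinear generators'' cutting out the multi-view locus does not by itself control codimension (three extra equations could a priori drop the dimension by more than one, or by zero on a component). The paper closes this by an explicit dimension count that your proposal is missing: when the $q_i$ are collinear, the residual component of $V(I)$ is identified with the family of triples of \emph{coplanar} lines, one through each $q_i$ --- a $\p1$ of planes through the line $\overline{q_1q_2q_3}$ times $(\p1)^3$ of lines in each such plane, hence a $4$-fold --- while the multi-view variety, being birational to $\p3$, is a $3$-fold contained in it; since here the intersection is the multi-view variety itself, codimension $1$ follows immediately. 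Your instinct to worry about the collinear case being over- or under-determined was correct, but the resolution is this concrete parametrization and dimension count, not a nonzerodivisor check on trilinear forms. (Your concern about scheme-theoretic versus set-theoretic codimension is moot: the paper's claim and proof are at the level of dimensions of the underlying varieties.)
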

\begin{proof}
In the family of triples of coplanar lines, requiring the third line to pass through the intersection of the first two is (generally) a linear constraint. Hence the triples of lines all having at least one point in common form a codimension 1 subvariety of this family.
\end{proof}

Hence, the ambiguous points in $S_P^{12}\cap S_P^{13}\cap S_P^{23}$ contain a subvariety that also lies in the set of critical points. This subvariety has codimension at most 1 (exactly 1 if we assume that the ambiguous points are not contained in the set of critical points).

\section{Critical configurations for three views}
\label{sec:critical_configurations_for_three_views}
The goal of this section is to provide proof of \cref{thr:main_theorem}. By \cref{cor:critical_lies_on_intersection_of_quadrics}, and the discussion in \cref{sec:compatible_triples_of_quadrics}, any critical configuration for three views lies on the intersection of three compatible quadrics. Furthermore, any such intersection is critical, with the possible exception of some number of ambiguous points (usually a point, line, or plane). As such, our approach is to consider all possible intersections of three quadrics, and for each case check whether it exists as the intersection of a compatible triple and if so, remove any potential ambiguous points. This will give us all critical configurations for three views. We will start with the cases where the quadrics intersect in a surface (\cref{ssec:quadrics_intersecting_in_surface}), then a curve (\cref{sec:quadrics_intersecting_in_a_curve}), and finally in a finite number of points (\cref{sec:quadrics_intersecting_in_points}).

\begin{remark} 
While the intersection of quadrics may contain non-reduced components, the definition of critical configuration is a set-theoretical one. This means that the multiplicity of components does not affect criticality. As such, we consider all components to be reduced.
\end{remark}
\begin{remark}
While the critical configurations for two views tend to have a finite number of conjugates, this is generally not the case for three views. In fact, the only critical configurations for three views that have a finite number of conjugates are the ones consisting of 7 or fewer points. The rest all have an infinite number of conjugates (although the dimension of the family of conjugates may vary). The proofs will usually revolve around constructing a single conjugate, but the choices made in the construction should make it clear that many conjugates exist when different choices are made.
\end{remark}

\subsection{Quadrics intersecting in a surface}
\label{ssec:quadrics_intersecting_in_surface}
In the two-view case, the points in a maximal critical configuration always form a surface. We begin by checking whether any such configurations remain critical in the three-view case. In particular, we want to examine whether there exist any critical configurations where the points form a surface. There are two ways three quadrics can intersect in a surface, either by all being equal or by all being reducible and sharing a common plane.

\subsubsection{The smooth quadric/cone}
\label{sec:all_quadrics_coincide}
We start with the case where the quadrics $S_P^{ij}$ are all the same smooth quadric or cone (denoted simply $S_P$). In this case, it is impossible to find 6 lines satisfying the conditions in \cref{thr:compatible_quadrics}, so one cannot find a non-collinear conjugate triple of cameras $Q_1,Q_2,Q_3$ such that $\widetilde{S_P}$ is the pullback of all three fundamental forms $F_Q^{ij}$. However, as long as $S_P$ is not one of the following configurations (illustrated in \cref{fig:impossible_configurations})
\begin{itemize}
\item $S_P$ is smooth and there is a camera center such that the two lines passing through it, each pass through one other camera center.
\item $S_P$ is a cone and two camera centers lie on the same line, but neither of the two lies at the vertex
\end{itemize} 
then three copies of $S_P$ satisfy the conditions of \cref{prop:compatible_collinear}, and hence form a compatible triple, coming from a triple of collinear cameras $Q_i$.

\begin{figure}[]
\begin{center}
\includegraphics[width = 0.50\textwidth]{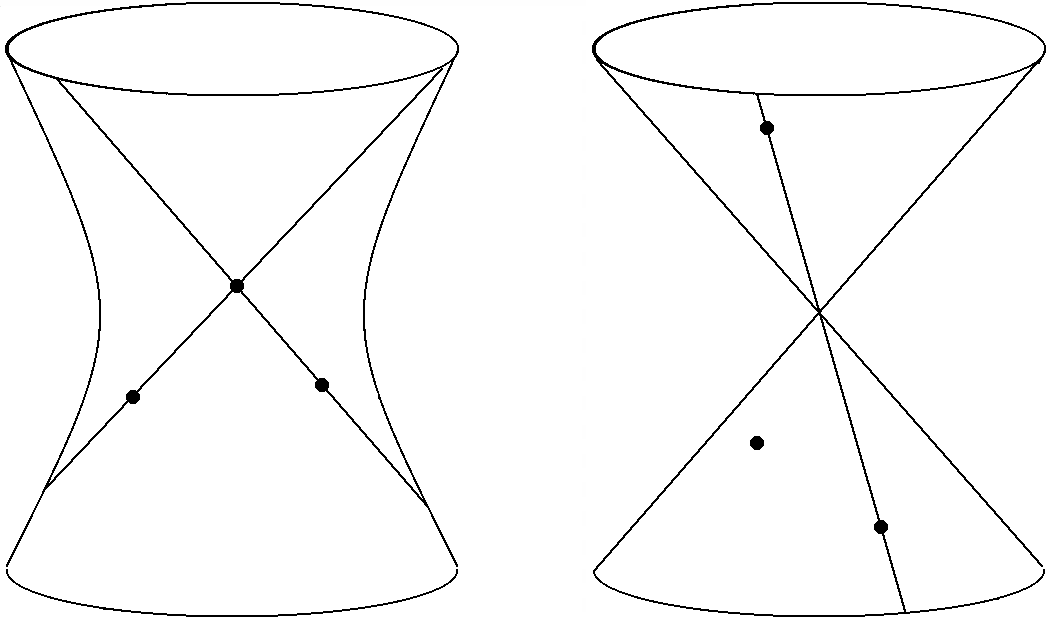}
\end{center}
\caption{If the camera centers are positioned as shown, the three quadrics (all three equal) do not constitute a compatible triple.}\label{fig:impossible_configurations}
\end{figure}

\begin{proposition}
\label{prop:rational_curves}
Let $P_1,P_2,P_3$ be three cameras, and let $S_P^{12},S_P^{13},S_P^{23}$ be a compatible triple of quadrics. If the three quadrics are all equal to the same irreducible quadric $S_P$, the set of critical points $X$ is the strict transform of a rational curve of degree at most four passing through all camera centers. The conjugate configuration is the strict transform of a curve of one degree less, not passing through any camera centers (plus the strict transform of the line passing through the camera centers, if $S_P$ is a cone).
\end{proposition}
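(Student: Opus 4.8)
The plan is to push the whole problem over to the conjugate side, where the geometry is cleaner: there the three camera centers are generically non-collinear, and the critical curve shows up as the base locus of a \emph{net} of quadrics, from which both rationality and the degree bound fall out almost for free.

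First I would record the geometric consequences of the hypothesis. Since $S_P^{12}=S_P^{13}=S_P^{23}=S_P$ with $S_P$ irreducible, the compatible triple is of the collinear type (three copies of the same smooth quadric or cone cannot meet the six-line condition of \cref{thr:compatible_quadrics}), so by \cref{prop_compatible_collinear} the conjugate centers $q_1,q_2,q_3$ lie on a line and there are three permissible lines $l_1,l_2,l_3\subseteq S_P$ with $p_i\in l_i$; on a smooth quadric they belong to one ruling, on a cone they are rulings through the vertex. Because the three quadrics are literally equal, their intersection is all of $S_P$, which is exactly the pullback of the residual of $V(F_Q^{12},F_Q^{13},F_Q^{23})$. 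By \cref{prop:reconstructible_codim_1_in_residual} the critical locus sits in codimension one inside this residual, so $X$ is a curve on $S_P$; and since the remaining trilinear generators pull back to cubics through $p_1,p_2,p_3$ (\cref{cor:critical_configurations_lie_on_quadrics_and_cubics}, \cref{lem:generators_of_the_MVI}) while the camera centers always lie in the critical locus, $X$ passes through all three centers.

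Next I would pass to the conjugate cameras $Q_1,Q_2,Q_3$. As long as $p_1,p_2,p_3$ are not collinear the fundamental forms $F_P^{12},F_P^{13},F_P^{23}$ are linearly independent, so their $\widetilde{\phi_Q}$-pullbacks $S_Q^{12},S_Q^{13},S_Q^{23}$ form a compatible \emph{non-collinear} triple spanning a net of quadrics, and by \cref{thr:compatible_implies_critical} the conjugate curve $Y$ is the one-dimensional part of their common base locus (minus a residual point or line). A one-dimensional base locus of a net of quadrics in $\p3$ is a twisted cubic or a degeneration of one (a conic, a line, or a union of such), hence it is rational of degree at most three. Moreover each $q_i$ lies on $S_Q^{ij}$ and $S_Q^{ik}$ but not on $S_Q^{jk}$, so $q_i\notin Y$; the conjugate curve avoids all three conjugate centers, as claimed. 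Since the conjugation correspondence is a birational map $X\dashrightarrow Y$, rationality of $Y$ gives rationality of $X$. Transporting the type of $Y$ back through the conjugate-curve formula \cref{prop:bidegree_on_P_->_bidegree_on_Q} applied to a generic pair of views, the vanishing of the multiplicities of $Y$ at the $q_i$ together with simple passage of $X$ through $p_1,p_2,p_3$ forces $X$ to be a $1$-section of the permissible ruling and raises the degree by exactly one, so $\deg X=\deg Y+1\le 4$.

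The main obstacle is precisely this last bidegree bookkeeping: verifying that the transport through \cref{prop:bidegree_on_P_->_bidegree_on_Q} is clean, that $X$ is a genuine $1$-section (so that it is rational and of degree exactly $\deg Y+1$ rather than a multisection of higher degree), and that the permissible ruling is indeed the one $X$ meets once. A secondary difficulty is the cone case. There $S_Q$ degenerates to a cone as well, the two rulings used in the smooth computation merge, and the contraction of the collinear centers forces the line $L_q=\overline{q_1q_2q_3}$ to appear as an extra isolated component of the conjugate. I would treat this by specializing the smooth-quadric computation and tracking the image of the vertex, checking that $X$ remains a rational curve of degree one more than the residual twisted-cubic part of $Y\setminus L_q$; the further sub-case in which $p_1,p_2,p_3$ happen to be collinear is then recovered by continuity.
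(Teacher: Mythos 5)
Your overall strategy is the same as the paper's: collinearity of the conjugate centers forces the critical locus to be a codimension-one curve in $S_P$ (\cref{prop:reconstructible_codim_1_in_residual}), cut out by cubics through all three centers; one then bounds the conjugate curve and transports its type back via \cref{prop:bidegree_on_P_->_bidegree_on_Q}. But the argument as written has a genuine gap at exactly the step you flag as ``the main obstacle'': you never actually show that $X$ meets the permissible ruling once, and without that you get neither rationality of $X$ nor $\deg X=\deg Y+1$. The missing closing move is short: writing $X$ as type $(a,b,c_1,c_2,c_3)$ with all $c_i\ge 1$, the formula of \cref{prop:bidegree_on_P_->_bidegree_on_Q} applied to each pair gives a conjugate with multiplicity $a-c_i$ at $q_i$; since (in the smooth case) $Y$ avoids the $q_i$, this forces $c_i=a$ for all $i$, and $a\ge 2$ is then excluded because a curve of bidegree at most $(2,3)$ with three points of multiplicity $2$ has negative arithmetic genus, hence is reducible. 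That yields $a=1$, whence $X$ is a unisecant (so rational) and the degree drops by exactly one on the conjugate side. The paper does precisely this as a finite enumeration of admissible types, producing the three cases $(1,3,1,1)$, $(1,2,1,1)$, $(1,1,1,1)$ of \cref{tab:curves_on_same_quadric}; you need some version of that enumeration, not just the statement that it should work.

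Two smaller points. First, your assertion that $q_i\notin S_Q^{jk}$ is stated without justification and is not automatic; the actual reason is that $q_1,q_2,q_3$ are collinear, so $q_i\in S_Q^{jk}$ would force $S_Q^{jk}$ to contain the line $\overline{q_jq_k}$, which by \cref{prop:what_are_the_conjugates} would make $S_P$ a cone --- this is exactly why the smooth and cone cases split as they do. Second, your bound $\deg Y\le 3$ rests on the claim that the one-dimensional base locus of a net of quadrics is a twisted cubic or a degeneration thereof; this is essentially true but you would need to prove it (ruling out, e.g., quartic curves with a $3$-dimensional space of quadrics through them). The paper sidesteps this entirely: it only uses that the three quadrics $S_Q^{ij}$ are not all equal, so $C_Q$ has bidegree at most $(2,2)$, and lets the transport formula do the rest of the cutting down. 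If you replace the net argument by that weaker observation and carry out the type enumeration, your proof closes and coincides with the paper's.
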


\begin{remark} 
Note that the two cases in \cref{fig:impossible_configurations} are exactly the two where $S_P$ does not contain a rational curve passing through all three camera centers
\end{remark}

\begin{proof}
Since the three cameras $Q_i$ on the other side have to be collinear, our critical configuration does not consist of all the points on the strict transform of $S_P$. By \cref{prop:reconstructible_codim_1_in_residual}, the critical points on $\widetilde{S_P}$ form a codimension 1 subvariety, the strict transform of a curve $C_P\subset S_P$.

The three cameras $Q_i$ lie on a line, so by \cref{prop:generators_of_the_MVI}, the ideal of their image in $\pxpxp$ is generated by three bilinear and three trilinear forms $(F_Q^{12},F_Q^{13},F_Q^{23},T_1,T_2,T_3)$. The fundamental forms pull back to quadrics $S_P^{ij}$ through two camera centers, while the trilinear forms pull back to cubic surfaces through all three camera centers. Hence the curve $C_P$ is the intersection of $S_P$ with three cubic surfaces, all containing the three camera centers. It follows that $C_P$ is of type $(a,b,c_1,c_2)$ (recall \cref{def:type_of_curve}) with $a,b\leq3$ and $c_1,c_2\geq1$.

The curve $C_P$ lies on all three quadrics $S_P^{ij}$ and its type $(a,b,c_i,c_j)$ depends on which quadric we consider since it is defined by the choice of permissible lines and the number of times it intersects two of the three camera centers. The permissible lines lie in the same family on all three quadrics, so $a$ and $b$ are independent of the choice of quadric. Furthermore, the total degree of the conjugate curve $C_Q$ depends (by \cref{prop:bidegree_on_P_->_bidegree_on_Q}) on the multiplicity of $C_P$ in the camera centers. Since the curve $C_Q$ is the same regardless of which quadric we consider, it follows that $C_P$ needs to have the same multiplicity in all three camera centers, the same is true for $C_Q$.

If we assume that the three camera centers $p_i$ are not collinear, the three quadrics $S_Q^{ij}$ on the other side cannot be the same quadric. It follows that the conjugate curve $C_Q\subset S_Q^{12}\cap S_Q^{13}\cap S_Q^{23}$ is of type $(a',b',c_1',c_2')$ with $a',b'\leq2$. If, on the other hand, the camera centers $p_i$ are collinear, the resulting curve is still the intersection of quadrics and cubics, and so needs to satisfy $a',b'\leq3$.

Furthermore, if $S_P$ is smooth, the curve $C_Q$ cannot pass through any of the three camera centers $q_i$, since this would imply that all three quadrics $S_Q$ contain the line spanned by the camera centers, which (by \cref{prop:what_are_the_conjugates}) violates our condition that $S_P$ is smooth.

On the other hand, if $S_P$ is a cone, then the three quadrics $S_Q^{ij}$ must (by \cref{prop:what_are_the_conjugates}) contain the line $l_Q$ spanned by the camera centers. However, the whole line $l_Q$ is conjugate to a single point, namely the vertex of $S_P$ (by \cref{prop:singular_quadric_one_side_means_line_on_the_other}). So in the case where $S_P$ is a cone, $C_Q$ is the union of $l_Q$ and some other curve.

This leaves only a finite number of options for what kind of curve $C_P$ can be. Finally, the curves should satisfy the result in \cref{prop:bidegree_on_P_->_bidegree_on_Q}: that if $C_P$ is of type $(a,b,c_1,c_2)$, then $C_Q$ is of type $(a,a+b-c_1-c_2,a-c_2,a-c_1)$. We can now check every possible type for $C_P$ and remove all cases where $a'$ or $b'$ are either negative or greater than 3, the cases where $c_i'$ is negative, and the cases where the multiplicity in the camera centers is greater than what the degree of the curve allows (for instance, a curve of degree 1 can not have multiplicity 2 in a camera center).

This leaves only three options, listed in \cref{tab:curves_on_same_quadric}. Note that if $S_P$ is a cone, then the curve on the other side will also include the line $l_Q$ in addition to the component given in the table.
\end{proof}

\begin{table}[h]
\centering
\begin{tabular}{@{}p{0.18\textwidth}p{0.18\textwidth}@{}}
\toprule
Curve on $S_P$ & Curve on $S_Q$ \\ \midrule
(1,3,1,1)         & (1,2,0,0)          \\ %\midrule
(1,2,1,1)         & (1,1,0,0)          \\ %\midrule
(1,1,1,1)         & (1,0,0,0)          \\ \bottomrule
\end{tabular}
\caption{Recall that a curve of type $(a,b,c_1,c_2)$ intersects the family containing the permissible lines $a$ times, the lines in the other family $b$ times, and passes $c_i$ times through the camera center $p_i$.}
\label{tab:curves_on_same_quadric}
\end{table}

\begin{remark}
This proves that any critical configuration where the three quadrics on one side coincide has to be one of the types in \cref{tab:curves_on_same_quadric}. In \cref{sec:quadrics_intersecting_in_a_curve} we prove the converse, that any configuration of such type is critical. The general case is the rational quartic on one side, and a twisted cubic on the other, while the two other configurations appear as degenerates of this one. 
\end{remark}

\subsubsection{Two planes}

\begin{proposition}
\label{prop:plane+conic_critical}
When the three quadrics $S_P^{ij}$ are all equal to the same two planes, the set of critical points is the union of a plane and the strict transform of a conic passing through all three camera centers. Conversely, any configuration where the points lie on the union of a plane and the strict transform of a conic curve passing through the camera centers is a critical configuration. Its conjugate is of the same type (plane + conic curve).
\end{proposition}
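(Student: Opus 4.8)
The plan is to fix the pair of planes $S_P=\Pi\cup\Pi'$, determine its set of critical points directly, and then reverse the construction for the converse. A useful first observation is that a conic lying on $S_P$ and passing through $p_1,p_2,p_3$ must be contained in a single plane (a nondegenerate conic is irreducible, hence cannot split across $\Pi$ and $\Pi'$), so the three camera centers are forced into a common plane, which I would take to be $\Pi$. The first real step is to check compatibility. I would produce six permissible lines, all lying in $\Pi$ and passing in distinct pairs through the three centers, and verify the two conditions of \cref{thr:compatible_quadrics}: the first condition is immediate because \cref{def:permissible_lines} requires the two lines of each permissible pair to share a plane, here $\Pi$, and the second holds because each spanned plane $\Pi_i=\vspan(g_{P_i}^{j},g_{P_i}^{k})$ equals $\Pi$ and $\Pi\subset S_P$. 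Taking the two lines through each center distinct exhibits the triple as compatible in the non-collinear sense and produces conjugate cameras $Q_1,Q_2,Q_3$.

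Next I would extract the two components. By \cref{thr:compatible_implies_critical} the set $\overline{X-x}$ is critical, where $x=\Pi_1\cap\Pi_2\cap\Pi_3$. Since all three planes equal $\Pi$ we have $x=\Pi$, and $\overline{(\Pi\cup\Pi')-\Pi}=\Pi'$; thus the whole plane $\Pi'$ is critical, yielding the plane component. As flagged in the remark following \cref{thr:compatible_implies_critical}, removing a plane as residual can lose critical points, and indeed \cref{prop:reconstructible_codim_1_in_residual} guarantees that the residual plane $\Pi$ still meets the set of critical points in a codimension-$1$ subscheme, that is, a curve $C\subset\Pi$. Identifying $C$ as a conic through $p_1,p_2,p_3$ is the heart of the argument.

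To pin down $C$ I would restrict the joint camera map to $\Pi$. Because every camera center lies in $\Pi$, each $P_i|_\Pi$ is the projection from $p_i$ onto the line $\ell_i=P_i(\Pi)$, so $\widetilde{\phi_P}|_\Pi$ maps into $\ell_1\times\ell_2\times\ell_3\cong(\p1)^3$, which is exactly the residual component of \cref{lem:V(FFF)_has_two_components}. This reduces the question to a two-dimensional reconstruction problem inside $\Pi$: three projections from points onto lines, whose ambiguity locus I expect to be a conic through the three centers, the planar analogue of the quadric-through-two-centers of the two-view classification (\cref{thr:critical_two_views}). Concretely, I would pull back the locus from \cref{prop:reconstructible_codim_1_in_residual} where the three coplanar preimage lines become concurrent, and verify that the resulting curve has degree $2$ and passes through each $p_i$. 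Showing that this curve is exactly a conic through all three centers, and that nothing else in $\Pi$ is critical, is the main obstacle.

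For the converse I would run the construction backwards: given points on a union of a plane $\Pi'$ and a conic through the camera centers, take $S_P$ to be the union of $\Pi'$ with the plane $\Pi$ of the conic; the camera centers then lie on $S_P$, the three copies of $S_P$ form a compatible critical triple as above, and the direct part shows every such point is critical. Finally, that the conjugate is of the same type follows from \cref{prop:what_are_the_conjugates} (two planes are conjugate to two planes): the conjugate centers $q_1,q_2,q_3$ span one of the conjugate planes, and re-running the analysis on the $Q$-side yields a plane together with a conic through the $q_i$.
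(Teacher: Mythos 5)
Your first half tracks the paper's argument closely: compatibility of the triple via \cref{thr:compatible_quadrics}, the plane $\Pi'$ entering the critical set via \cref{thr:compatible_implies_critical}, and the existence of a residual curve $C\subset\Pi$ via \cref{prop:reconstructible_codim_1_in_residual} are exactly the paper's steps. But the two points you yourself flag as open are precisely where the content lies. For the identification of $C$ as a conic through $p_1,p_2,p_3$, the paper does not set up a planar reconstruction problem; it observes via \cref{lem:generators_of_the_MVI} that the critical set is cut out on $S_P=\Pi\cup\Pi'$ by a cubic surface through the three camera centers, and since that cubic surface must contain all of $\Pi'$, its trace on $\Pi$ splits off the line $\Pi\cap\Pi'$ and leaves a conic through the centers. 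Your proposed concurrency computation would naively produce a cubic curve in $\Pi$ (the determinantal condition for three lines, each depending linearly on the point, to be concurrent), and you would still have to explain why a line factors off; as written, the degree-$2$ claim is asserted rather than proved.

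The more serious gap is the converse. Knowing that \emph{some} conic through the centers is critical (which is all the direct part gives, for one particular choice of conjugate cameras) does not show that an \emph{arbitrary} prescribed conic through $p_1,p_2,p_3$, together with an arbitrary plane, is critical: the critical conic depends on the choice of permissible lines. The paper closes this by identifying three further points $a_j=g_{P_i}^{j}\cap g_{P_k}^{j}$ that must lie on the critical conic (each is conjugate to a camera center $q_j$), noting that $p_1,p_2,p_3,a_1,a_2,a_3$ then determine the conic, invoking Pascal's theorem to see that these six points are in fact conconic, and finally showing that the $a_j$ --- hence the conic --- can be realized arbitrarily by varying the permissible lines. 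None of this appears in your ``run the construction backwards'' paragraph, so the converse direction is essentially missing.
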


\begin{proof}
Let the three quadrics $S_P^{ij}$ all be equal to the same two planes $S_P$, \cref{thr:critical_two_views} states that all three cameras need to lie in the same plane, with any number of them lying on the intersection of both. In this case, unlike the previous one, there exist six lines satisfying the conditions in \cref{thr:compatible_quadrics} as long as not all three cameras lie on the intersection of the two planes. An example is given in \cref{fig:two_planes}. Hence, the three cameras $Q_i$ on the other side can be chosen to not be collinear. 

Since the three cameras $Q_i$ on the other side can be assumed to not be collinear, \cref{thr:compatible_implies_critical} proves that the plane not containing all camera centers is part of the set of critical points. Furthermore, \cref{prop:reconstructible_codim_1_in_residual} proves that there is a curve $C_P$ in the plane with the camera centers whose strict transform is also part of this set. By \cref{prop:generators_of_the_MVI}, the set of critical points is the strict transform of the intersection of the two planes with a cubic surface passing through the three camera centers, so the curve $C_P$ is a conic passing through the three camera centers. 

For the converse, we want to determine the conic curve $C$ whose strict transform lies in the set of critical points. We already know that $C_P$ passes through the three camera centers. Furthermore, the intersection of the two permissible lines $g_{P_i}^{j}$ and $g_{P_k}^{j}$ (denote it by $a_j$) must also lie in the set of critical points since it has a conjugate point lying on the exceptional divisor $E_{q_j}$. This gives us three more points on the conic. Usually, there is no conic passing through six given points. In this case, however, the six points $p_1,p_2,p_3,a_1,a_2,a_3$ along with the six lines $g_{P_i}^{j}$ span a hexagon whose opposite sides ($g_{P_i}^{j}$ and $g_{P_j}^{i}$) meet at three points which lie on a line (the intersection of the two planes) so by the Braikenridge–Maclaurin theorem \cite[p.76]{conversePascal} the six points $p_1,p_2,p_3,a_1,a_2,a_3$ lie on a conic.

Finally, for any conic $C_P$ through $p_1,p_2,p_3$ one may choose the six lines $g_{P_i}^{j}$ in such a way that the three points $a_i$ lie on $C_P$ \emph{and} each pair of lines form a permissible pair. This can be done by picking $a_1$ to be any\footnote{not equal to a camera center and not lying in the intersection of the two planes} point on $C_P$ and then taking $a_2$ to be the intersection of $g_{P_1}^{2}$ with $C_P$. Since we can get any conic by making the appropriate choice of permissible lines, this proves that any plane + conic configuration is critical. 
\end{proof}

\begin{figure}[]
\begin{center}
\includegraphics[width = 0.60\textwidth]{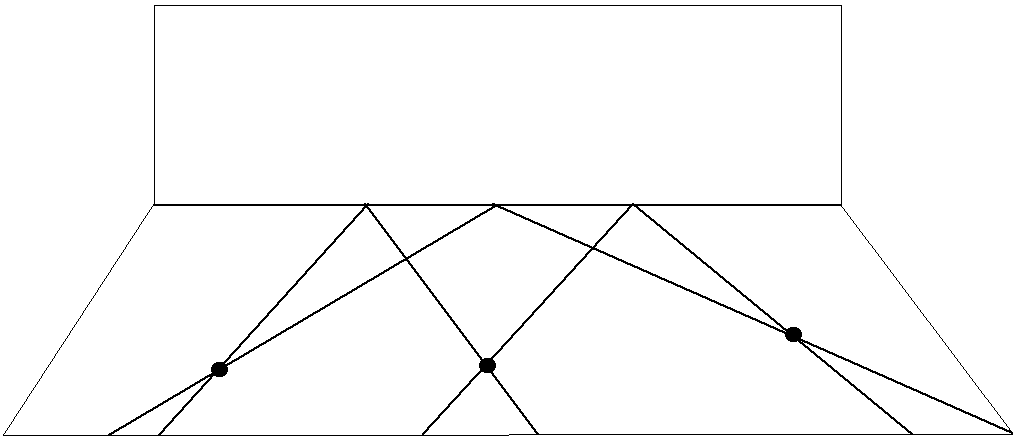}
\end{center}
\caption{One example of six lines satisfying the conditions in \cref{thr:compatible_quadrics}}\label{fig:two_planes}
\end{figure}

\subsubsection{Single plane}
With this, there is only one way left that the three quadrics $S_P^{ij}$ can intersect in a surface, namely if they are all reducible, and contain only one common plane. In this case, the set of critical points is at most a plane plus a line. Hence, this is not a maximal critical configuration, as it is always a subconfiguration of the plane + conic configuration in \cref{prop:plane+conic_critical}.

\subsection{Quadrics intersecting in a curve}
\label{sec:quadrics_intersecting_in_a_curve}
Curves that appear as the intersection of quadrics have at most degree 4. Moreover, any curve that lies on the union of a plane and a conic passing through the camera centers is critical by \cref{prop:plane+conic_critical}, which further reduces the number of curves we need to check. We analyze all curves appearing as the intersection of quadrics sorted by the degree of their highest degree component. In \cref{sec:quartic_curves}, we analyze the curves where the highest degree component is of degree 4, covering the elliptic quartic as well as the singular rational quartics. \cref{sec:cubic_curves} covers the curves containing a twisted cubic, while \cref{sec:curves_of_lower_degree} covers the curves containing no components of degree higher than 2.

\begin{lemma}
\label{lem:intersection_of_strict_transforms_is_strict_transform_of_intersection}
Let $S_P^{ij}$ be a triple of quadrics in $\p3$ intersecting in a curve $C_P$. Let $\widetilde{S_P^{ij}}$ be the surface we get in $\Bl_{\textbf{P}}(\p3)$ when taking the strict transform under the blow-up in $p_i,p_j$ and the total transform under the final blow-up. Then the intersection $\widetilde{S_P^{12}}\cap \widetilde{S_P^{13}}\cap \widetilde{S_P^{23}}$ is the strict transform of $C_P$.
\end{lemma}
\begin{proof}
The statement is trivial when $C_P$ does not pass through a camera center. If $C_P$ does pass through a camera center, any tangent to $C_P$ in this camera center must also be a tangent to each of the quadrics $S_P^{ij}$, it follows that the intersection of the $\widetilde{S_P^{ij}}$ is the strict transform of $C_P$.
\end{proof}

While it is not true in general that the intersection of strict transforms is the strict transform of the intersections, \cref{lem:intersection_of_strict_transforms_is_strict_transform_of_intersection} tells us that this is the case for three quadrics intersecting in a curve. This allows us to work in $\p3$, considering all curves appearing as intersections of three quadrics. The critical configurations will then consist of points lying on the strict transforms of whichever curves we have in $\p3$.

For most curves, the main issue is to find a compatible (in the non-collinear sense) triple of quadrics containing it. The issue of having to remove a point, line, or plane (as per \cref{thr:compatible_implies_critical}) is less relevant since the curves will usually not contain any such component. To find compatible triples of quadrics, the following lemmas are useful:

\begin{lemma}
\label{lem:compatible}
Let $P_1$, $P_2$, $P_3$ be a triple of cameras, and let $S_P^{12}$, $S_P^{13}$, $S_P^{23}$ be a triple of critical quadrics, each passing through two camera centers (indicated by the superscript). Then the three quadrics are compatible if
\begin{enumerate}
\item[A)] The intersection of $S_P^{ij}$ and $S_P^{ik}$ does not contain a line passing through $p_i$.
\item[B)] There exists a point $x$ lying on all three quadrics, as well as three distinct lines $l_{12}$, $l_{13}$, $l_{23}$ passing through $x$, satisfying
\begin{enumerate}
\item[1.] $l_{ij}\subset S_P^{ij}$.
\item[2.] $p_i\in \vspan(l_{ij},l_{ik})$.
\item[3.] $l_{ij}\notin \vspan(l_{ik},l_{jk})$.
\item[4.] $p_i,p_j\notin l_{ij}.$
\end{enumerate}
\end{enumerate}
\end{lemma}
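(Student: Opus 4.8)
The plan is to reduce everything to the criterion of \cref{thr:compatible_quadrics} by using the data in hypothesis B) to manufacture the six permissible lines $g_{P_i}^{j}$. Starting from condition B.2, for each index $i$ I would set $\Pi_i=\vspan(l_{ij},l_{ik})$; this is a genuine plane (the two lines are distinct and meet at $x$), and by B.2 it contains the camera center $p_i$. Intersecting $\Pi_i$ with the quadric $S_P^{ij}$ yields a conic which already contains the line $l_{ij}$ (by B.1 we have $l_{ij}\subset S_P^{ij}$, and $l_{ij}\subset\Pi_i$ by construction), so this conic must degenerate into $l_{ij}$ together with a residual line. Since $p_i\in\Pi_i\cap S_P^{ij}$ while $p_i\notin l_{ij}$ by B.4, the center $p_i$ is forced onto that residual line, which I then define to be $g_{P_i}^{j}$. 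Performing this for both $j$ and $k$ produces two lines $g_{P_i}^{j},g_{P_i}^{k}$ through $p_i$ inside $\Pi_i$; hypothesis A) guarantees they are distinct, for otherwise their common line would lie in $S_P^{ij}\cap S_P^{ik}$ and pass through $p_i$. Hence $\vspan(g_{P_i}^{j},g_{P_i}^{k})=\Pi_i$.

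Next I would verify the second condition of \cref{thr:compatible_quadrics}. Since $l_{ij}\subset\Pi_i$ and $l_{ij}=l_{ji}\subset\Pi_j$, the line $l_{ij}$ lies in both planes. Condition B.3 asserts that the three lines $l_{12},l_{13},l_{23}$ are not coplanar, so the three planes $\Pi_1,\Pi_2,\Pi_3$ are pairwise distinct and $\Pi_i\cap\Pi_j$ is exactly the line $l_{ij}$. As $l_{ij}\subset S_P^{ij}$ by B.1, the intersection $\Pi_i\cap\Pi_j$ lies on $S_P^{ij}$, which is precisely the content of the second condition once we recall $\vspan(g_{P_i}^{j},g_{P_i}^{k})=\Pi_i$.

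For the first condition of \cref{thr:compatible_quadrics} I must show $g_{P_i}^{j}$ and $g_{P_j}^{i}$ form a permissible pair on $S_P^{ij}$. Both lines lie on $S_P^{ij}$ and pass through their respective camera centers, so the first clause of \cref{def:permissible_lines} holds automatically. For the remaining clauses I would argue by rulings: in $\Pi_i$ the quadric cuts out $l_{ij}\cup g_{P_i}^{j}$, two coplanar lines on $S_P^{ij}$, which therefore belong to opposite rulings; likewise $l_{ij}$ and $g_{P_j}^{i}$ lie in opposite rulings. Thus $g_{P_i}^{j}$ and $g_{P_j}^{i}$ share the ruling complementary to $l_{ij}$, making them skew on a smooth quadric, and since a smooth quadric has no singular points the pair is permissible. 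Applying \cref{thr:compatible_quadrics} then gives compatibility.

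The step I expect to be the main obstacle is the permissibility check when $S_P^{ij}$ is not smooth. For a cone or a pair of planes the clean two-ruling dichotomy breaks down: the residual of $\Pi_i\cap S_P^{ij}$ need no longer be transverse to $l_{ij}$, $\Pi_i$ could in principle be a component of a reducible $S_P^{ij}$, and the singular-locus clauses (2)--(4) of \cref{def:permissible_lines} must be checked directly. In particular I would need to exclude the degenerate coincidence $g_{P_i}^{j}=g_{P_j}^{i}$ (a single ruling line through both $p_i$ and $p_j$) and confirm that any common point of the two lines is forced into the singular locus. I would dispatch these by treating the cone and the two-plane cases separately, using condition A) together with B.4 to rule out the degeneracies, and then conclude via \cref{thr:compatible_quadrics} that $S_P^{12},S_P^{13},S_P^{23}$ is a compatible triple.
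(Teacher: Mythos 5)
Your proposal follows essentially the same route as the paper: produce six lines $g_{P_i}^{j}$ meeting the $l_{ij}$, observe that $\vspan(g_{P_i}^{j},g_{P_i}^{k})=\vspan(l_{ij},l_{ik})$ and that $\Pi_i\cap\Pi_j=l_{ij}\subset S_P^{ij}$ (your use of B.3 for distinctness of the planes and of B.4 to exclude $g_{P_i}^{j}=g_{P_j}^{i}$ via $\Pi_i\cap\Pi_j=l_{ij}$ is exactly right), and then invoke \cref{thr:compatible_quadrics}. In the irreducible case your residual-line construction produces the same lines the paper uses, and your smooth-case argument is complete. The one substantive difference is how permissibility is obtained, and it is precisely what closes the degenerate cases you leave open. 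You construct the lines as plane sections and then try to verify \cref{def:permissible_lines} from scratch, which forces a case analysis over cones, pairs of planes, and the possibility that $\Pi_i$ is itself a component of a reducible $S_P^{ij}$. The paper instead exploits the hypothesis that the $S_P^{ij}$ are \emph{critical} quadrics: by \cref{thr:critical_two_views} and \cref{lem:permissible_lines_and_conjugates} each already carries permissible pairs (in bijection with conjugates), so one simply \emph{selects} a pair whose two lines both meet $l_{ij}$ — permissibility then holds by fiat — and checks that such a selection is only obstructed when $S_P^{ij}$ contains the line $\overline{p_ip_j}$ and $l_{ij}$ meets it, a situation ruled out by conditions B.2--B.4. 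If you want to keep your constructive version, you do need to finish the cone and reducible cases (they genuinely occur in the applications, e.g.\ the collinear twisted cubic), or else replace the hand verification of permissibility by an appeal to the two-view classification as the paper does.
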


\begin{proof}
Given three quadrics $S_P^{ij}$, assume there exists a point $x$ and three lines $l_{ij}$ satisfying the conditions in the lemma above. For each quadric $S_P^{ij}$, pick two lines $g_{P_i}^{j}$ and $g_{P_j}^{i}$ such that they both intersect $l_{ij}$, and form a permissible pair. The only case where such a choice is not possible is if the quadric $S_P^{ij}$ contains the line spanned by $p_i$ and $p_j$, and the line $l_{ij}$ intersects this line, but this would violate either condition 2, 3 or 4.

This gives us six lines $g_P$, two on each quadric, and two through each camera center. Condition $A)$ ensures that the two lines through each camera center are distinct, so they span a plane. By construction, the plane spanned by $l_{ij}$ and $l_{ik}$ is the same as the one spanned by $\gp{i}{j}$ and $\gp{i}{k}$. This means that the lines $g_P$ satisfy the conditions in \cref{thr:compatible_quadrics}. It follows that the three quadrics are compatible.
\end{proof}

\begin{lemma}
\label{lem:compatible_line}
Let $P_1$, $P_2$, $P_3$ be a triple of cameras, and let $S_P^{12}$, $S_P^{13}$, $S_P^{23}$ be a triple of critical quadrics. If there exists a line $L$ lying on all three quadrics, but not passing through the camera centers such that the intersection of $S_P^{ij}$ and $S_P^{ik}$ does not contain a line passing through $p_i$ and intersecting $L$, then the quadrics form a compatible triple.
\end{lemma}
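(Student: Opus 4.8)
The plan is to verify the two conditions of \cref{thr:compatible_quadrics} directly, using the common line $L$ to single out a canonical choice of permissible lines. For each camera center $p_i$, let $\Lambda_i = \vspan(L, p_i)$ be the plane it spans with $L$; this is well defined since $p_i \notin L$. Because $L$ lies on $S_P^{ij}$, the plane section $\Lambda_i \cap S_P^{ij}$ is a conic containing the line $L$, and therefore degenerates into $L$ together with a residual line; I would define $g_{P_i}^{j}$ to be this residual line. Since $p_i$ lies on $S_P^{ij}$ and in $\Lambda_i$ but not on $L$, it must lie on the residual line, so $g_{P_i}^{j}$ passes through $p_i$. This produces the six lines required by \cref{thr:compatible_quadrics}, two on each quadric and two through each camera center.

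For the second condition, note that $g_{P_i}^{j}$ and $g_{P_i}^{k}$ both lie in $\Lambda_i$, so the plane $\Pi_i = \vspan(g_{P_i}^{j}, g_{P_i}^{k})$ they span equals $\Lambda_i$, provided the two lines are distinct. Their distinctness is exactly where the hypothesis enters: if $g_{P_i}^{j} = g_{P_i}^{k}$, this line would pass through $p_i$, lie on both $S_P^{ij}$ and $S_P^{ik}$, and, being a line in $\Lambda_i$, meet $L$ (since $L \subset \Lambda_i$), contradicting the assumption that $S_P^{ij} \cap S_P^{ik}$ contains no line through $p_i$ meeting $L$. With $\Pi_i = \Lambda_i$ established, $\Pi_i \cap \Pi_j = \Lambda_i \cap \Lambda_j$; as both planes contain $L$ and are distinct, their intersection is precisely $L$, which lies on $S_P^{ij}$. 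Hence the second condition of \cref{thr:compatible_quadrics} holds.

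It remains to check the first condition, that $g_{P_i}^{j}$ and $g_{P_j}^{i}$ form a permissible pair on $S_P^{ij}$; this is the step I expect to be the main obstacle, since permissibility (\cref{def:permissible_lines}) is sensitive to the type of the quadric. When $S_P^{ij}$ is smooth the argument is clean: $g_{P_i}^{j}$ is coplanar with $L$ (both lie in $\Lambda_i$) and hence meets $L$, so it belongs to the ruling opposite to that of $L$; the same holds for $g_{P_j}^{i}$, so the two lines lie in a common ruling, are therefore skew, and property 2 of \cref{def:permissible_lines} becomes vacuous, making the pair permissible. For a cone the residual lines pass through the vertices, which lie on $L$, and meet at a singular point, a configuration I would verify against properties 2 and 3 directly. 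The genuinely delicate situations are the degenerate ones: when $p_i$, $p_j$ and $L$ are coplanar the planes $\Lambda_i$ and $\Lambda_j$ coincide and the residual lines collapse onto $\overline{p_i p_j}$, and when $\overline{p_i p_j}$ itself lies on $S_P^{ij}$ the candidate permissible pair degenerates. I would dispatch these by showing that each forces either a reducible quadric or a forbidden common line meeting $L$, placing them outside the hypotheses, or else handle them by a continuity argument on the positions of the camera centers.
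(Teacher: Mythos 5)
Your construction is exactly the paper's: on each quadric $S_P^{ij}$ take $g_{P_i}^{j}$ to be the line through $p_i$ meeting $L$ (your residual line of $\Lambda_i\cap S_P^{ij}$ is precisely this line), and then verify the two conditions of \cref{thr:compatible_quadrics}; the paper's own proof is two sentences that defer to "the same arguments as above," whereas you spell out the distinctness, plane-intersection, and permissibility checks explicitly, including the degenerate coplanar cases the paper silently leaves to the reader. The argument is correct and essentially identical in approach.
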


\begin{proof}
On each quadric $S_P^{ij}$, take $g_{P_i}^{j}$ to be the line through $p_i$ intersecting $L$. By the same arguments as above, this gives us 6 lines satisfying \cref{thr:compatible_quadrics}.
\end{proof}

\begin{remark} 
In the two lemmas above, the lines $l_{ij}$ and $L$ all intersect the permissible lines, meaning they are of type $(1,0,0,0)$ on their respective quadrics.
\end{remark}

\begin{remark}
The two lemmas above have the condition that the intersection of two quadrics does not contain a line passing through their common camera center. This is to ensure that the two lines $g_{P_i}^{j}$ and $g_{P_i}^{k}$ span a plane, which is a necessary condition for the quadrics to be compatible (in the non-collinear sense). This is a weak condition, and in the following sections, whenever we apply either of the two lemmas above, the cases where two lines through a camera center coincide can easily be avoided, if such cases exist at all. To keep the arguments relatively short, the verification of this fact is left to the reader.
\end{remark}

\subsubsection{Quartic curves}
\label{sec:quartic_curves}
We begin with the case where the three quadrics all lie in the same pencil, that is, where the intersection of all three is equal to the intersection of any two. In this case, the intersection is a quartic curve, passing through at least two of the camera centers. If the curve is irreducible, it is either a smooth elliptic quartic curve or a singular rational quartic (cusp or node). The cases where the curve is reducible are covered in \cref{sec:cubic_curves,sec:curves_of_lower_degree}.

\begin{proposition}
\label{prop:elliptic_curve_is_critical}
Any configuration consisting of three cameras along with a set of points lying on the strict transform of an elliptic quartic curve passing through the camera centers is critical. The same is the case if the points lie on the strict transform of a singular quartic curve passing through the camera centers, as long as neither camera center lies on the singular point. In both cases, they have conjugates that are curves of the same type.
\end{proposition}

\begin{proof}
A quartic curve (elliptic or singular) is contained in a pencil of quadrics. This leaves 1 degree of freedom when choosing quadrics. In particular, for each line secant to the curve, there is a unique quadric in the pencil that contains this line. 

Denote the quartic curve by $C$, and let $x$ be a smooth point on $C$ not lying in the plane spanned by $p_1$, $p_2$, $p_3$. Projecting from the point $x$, we get a cubic curve $C'$ in the plane, along with three distinct points $p_1',p_2',p_3'$ on the curve (the projection of the camera centers).

Assume there exist three real points $y_1,y_2,y_3$ on this curve, different from the $p_i'$, such that $p_i'$ lies on the line $\overline{y_jy_k}$ for $i\neq j\neq k$ (see \cref{fig:22111}).

\begin{figure}
\begin{center}
\includegraphics[width = 0.55\textwidth]{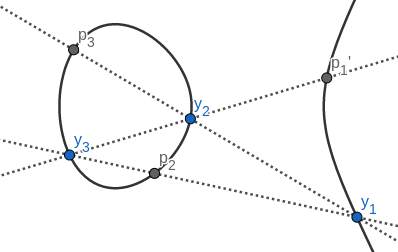}
\end{center}
\caption{Given three points $p_1',p_2',p_3'$ on a smooth cubic plane curve, there exist three lines forming a triangle with vertices $y_1,y_2,y_3$ such that each line contains exactly one point.}\label{fig:22111}
\end{figure}

The preimages of the three points $y_i$ are three lines through $x$, secant to $C$. Denote the preimage of $y_i$ by $l_{jk}$, and let $S_P^{ij}$ be the unique quadric containing $C$ and the secant $l_{ij}$. Then the point $x$ along with the three lines $l_{12},l_{13},l_{23}$ satisfy the conditions in \cref{lem:compatible}, so the three quadrics $S_P^{12},S_P^{13},S_P^{23}$ constitute a compatible triple. Since the curve $C$ is the intersection of three compatible quadrics, the three cameras along with the strict transform of $C$ constitute a critical configuration.

Now we need only prove that three such points $y_i$ exist. Given the three points $p_i'$ on $C'$, define a map 
\begin{align*}
f\from C'\dashrightarrow C'
\end{align*}
as follows: for each smooth point $a$, the lines $\overline{p_1'a}$ and $\overline{p_2'a}$ intersect the cubic in two new points, which we denote by $b_1$ and $b_2$ respectively. The line $\overline{b_1b_2}$ intersects $C$ in a third point, let this point be $f(a)$ (see \cref{fig_operation}). This construction yields a rational map that extends to a morphism by considering $\overline{p_i'a}$ to be the tangent line to $C$ in $p_i'$ if $a=p_i'$ (this only gives a morphism in the smooth case, in the singular it remains undefined in the singular point).

\begin{figure}[h]
\begin{center}
\includegraphics[scale=0.5]{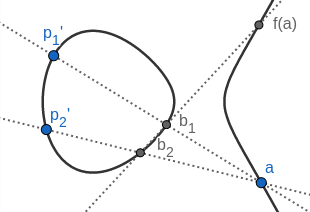}
\end{center}
\caption{Illustration of the map $f$ on a smooth cubic curve}
\label{fig_operation}
\end{figure}

If there exists a point $a$ such that $f(a)=p_3'$, we can take $y_1,y_2,y_3$ to be the points $a,b_1,b_2$, since these satisfy the conditions above. The map $f$ can be expressed as 
\begin{align*}
f(a)=p_1'+p_2'+2a
\end{align*}
where \enquote{+} is the sum under the usual group law on an elliptic curve (and a similarly defined sum on the singular one). The fiber over $p_3'$ consists of all points $a$ satisfying
\begin{equation}
\label{eq_gruppevirkning}
2a=p_3'-p_1'-p_2'
\end{equation}
The point $a$ is determined up to 2-torsion, so in the smooth case, the map $f$ is $4:1$, in the singular case the map is $2:1$ and $1:1$ if the curve is a node or cusp respectively. This means there exists four, two, or one suitable triples $y_1,y_2,y_3$ respectively. Still, we need to show that at least one is real and satisfying $y_i\neq p_j'$. 

In the elliptic quartic case, the only case where we have no real solutions for (\ref{eq_gruppevirkning}) is if $C'$ consists of two connected components and the point $p_3'-p_1'-p_2'$ lies on the component not containing the origin (only in this case do no real tangents pass through $p_3'-p_1'-p_2'$). Since $p_3'-p_1'-p_2'$ lies on the component of $C'$ that contains an odd number of cameras, we can ensure that there is a real solution by taking our projection center $x$ to lie on whichever component of $C$ that has an odd number of camera centers.

In the nodal case, it might also happen that neither of the two solutions is real. Like in the smooth case, this can be avoided by choosing the initial projection center on the quartic curve to lie on the side of the singularity that contains an odd number of cameras. In the cuspidal case, the point $a$ is uniquely determined and necessarily real since complex solutions would appear as conjugate pairs.

This ensures that there exists a real solution. Next, given a solution to (\ref{eq_gruppevirkning}), we take
\begin{align*}
y_1&=p_2'-a,\\
y_2&=p_1'-a,\\
y_3&=a,
\end{align*}
so the only way $y_i= p_j'$ is a solution is if $3p_j'-p_i'+p_k'=0$. $y_i=p_i'$ is only a solution if $p_i'+p_j'+p_k'=0$. There is a finite number of choices of $x$ where this is the case, so we can again avoid the issue by picking an appropriate $x$.

In all these cases, the curve $C$ (a quadric passing through all camera centers) is of type $(2,2,1,1)$, so by \cref{prop:bidegree_on_P_->_bidegree_on_Q}, its conjugate is also of type $(2,2,1,1)$.
\end{proof}

\begin{remark}
The proof above shows that nodal or cuspidal quartic curves are critical by constructing a conjugate configuration that is of the same type, nodal or cuspidal curves respectively. The nodal and cuspidal quartic curves also have another family of conjugates (the elliptic ones do not). Unlike the elliptic quartic case, the singular curves are rational, this means they appear as critical configurations when the three quadrics $S_P^{ij}$ coincide as a cone (they are of type (1,3,1,1) in \cref{tab:curves_on_same_quadric}). This is proven in \cref{prop:twisted_cubic_and_secant_line}, the conjugates are twisted cubics with a secant or tangent line respectively.
\end{remark}

In the case where the elliptic quartic only passes through two of the camera centers, the curve is of type $(2,2,1,0)$ on two of the quadrics and $(2,2,1,1)$ on the last. If such a configuration was critical, the conjugate curve would have to be of type $(2,3,2,1)$ on two of the quadrics and $(2,2,1,1)$ on the last, but a curve of degree 5 does not appear as the intersection of quadrics, so this can not be a critical configuration. A similar problem appears if the curve is singular and one of the camera centers lies on the singular point 

\subsubsection{The twisted cubic}
\label{sec:cubic_curves}
Having covered the quartic curves, we move one degree lower, to the cubics. Plane cubics do not appear as the intersection of quadrics, so the only irreducible cubic curve that might be critical is the twisted cubic curve. Unlike the case with the elliptic quartic, we no longer require all camera centers to lie on the curve. We consider the possible configurations of cameras one by one.

\begin{proposition}
\label{prop:twisted_cubic_is_critical}
Any configuration consisting of three cameras along with a set of points all lying on the strict transform of a twisted cubic passing through the camera centers is critical. The conjugate consists of points on the strict transform of a conic curve not passing through the camera centers. 
\end{proposition}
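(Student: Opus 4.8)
The plan is to exploit the fact that a twisted cubic $C$ is cut out by a \emph{net} of quadrics (a $\mathbb{P}^{2}$ of them), which gives two degrees of freedom when choosing the three quadrics $S_P^{12},S_P^{13},S_P^{23}$, as opposed to the single degree of freedom available in the elliptic quartic case. I treat the maximal case where the point set and all three camera centers equal (lie on) the curve $C$; the statement for a subset of points then follows from the general principle that subconfigurations of a critical configuration are critical. Since every camera center lies on $C$, every quadric in the net automatically passes through all three centers, so each is a candidate critical quadric, and a generic member is smooth, hence critical for the relevant pair by \cref{thr:critical_two_views}. Moreover any two distinct members of the net meet in $C$ plus a residual line, and three members that span the net meet in exactly $C$. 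The goal is therefore to choose three spanning members of the net forming a compatible triple; the configuration will then be critical with $C$ as its set of critical points.

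First I would pin down the conjugate and the correct ruling. On a smooth $S_P^{ij}$ the twisted cubic has bidegree $(1,2)$ and passes once through each of $p_i,p_j$, so, taking the permissible lines in the ruling that meets $C$ only once, it is a curve of type $(1,2,1,1)$. By \cref{prop:bidegree_on_P_->_bidegree_on_Q} its conjugate has type $(1,\,1+2-1-1,\,1-1,\,1-1)=(1,1,0,0)$, that is, a conic meeting none of the conjugate camera centers. This both identifies the conjugate as the asserted conic and tells me that the permissible lines must be chosen in the uni-secant ruling: the opposite ruling would give a type $(2,1,1,1)$ twisted cubic through the conjugate centers instead of a conic, and since a twisted cubic with three marked points is unique up to $\PGL(4)$ this second choice does not produce a genuinely new configuration. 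Thus only the uni-secant ruling yields a true conjugate.

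The heart of the argument is producing the compatible triple, and here I would apply \cref{thr:compatible_quadrics} directly rather than \cref{lem:compatible}. For three chosen members of the net, the permissible line $g_{P_i}^{j}$ is forced to be the uni-secant ruling line through $p_i$ on $S_P^{ij}$, and I must arrange the six resulting lines so that $\mathrm{span}(g_{P_i}^{j},g_{P_i}^{k})\cap\mathrm{span}(g_{P_j}^{i},g_{P_j}^{k})\subset S_P^{ij}$ for every pair. As in the elliptic-quartic proof I would translate these span-conditions into incidences among the projected lines and centers after projecting $C$ to a smooth plane conic, the three extra span-conditions being matched by the two parameters of the net together with the freedom in the lines; alternatively one may construct the conjugate conic and cameras $Q_i$ directly and verify that the images coincide. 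Once a spanning compatible triple is found, the set of critical points is $S_P^{12}\cap S_P^{13}\cap S_P^{23}=C$ minus the residual, which by \cref{lem:residual_is_intersection_of_three_planes} is the single point $\Pi_1\cap\Pi_2\cap\Pi_3$; removing one point from the irreducible curve $C$ and taking closure returns all of $C$, so \cref{thr:compatible_implies_critical} shows the whole twisted cubic is critical.

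The main obstacle is precisely this existence step: showing that for \emph{every} placement of the three centers on $C$ one can choose three spanning members of the net realizing the span-conditions of \cref{thr:compatible_quadrics}. The naive device available for the elliptic quartic, namely feeding three concurrent secants into \cref{lem:compatible}, is unavailable here, because the lines $g_{P_i}^{j}$ must be uni-secants (the companion construction would force the wrong, twisted-cubic, conjugate), so the compatibility has to be established either by the planar incidence reduction above or, if no clean synthetic argument presents itself, by an explicit computation in coordinates on the net. The remaining points to check, namely that the chosen triple genuinely spans the net (so the intersection is only $C$) and that the conjugate conic is smooth, are open conditions and hold for a generic admissible choice; the delicate part is guaranteeing at least one admissible choice uniformly in the positions of $p_1,p_2,p_3$.
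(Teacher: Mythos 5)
Your setup is right (the net of quadrics through $C$, the type $(1,2,1,1)$ on each quadric, the conjugate of type $(1,1,0,0)$), but the proof has a genuine gap exactly where you acknowledge one: you never produce a compatible triple, you only describe two strategies ("planar incidence reduction" or "explicit computation in coordinates") that you might try, and you concede that guaranteeing an admissible choice for every placement of $p_1,p_2,p_3$ is unresolved. That existence step is the entire content of the proposition, so as written the argument is incomplete. Moreover, the reason you give for abandoning the secant-line route rests on a mix-up of rulings. In \cref{lem:compatible} and \cref{lem:compatible_line} the auxiliary lines $l_{ij}$ (resp.\ $L$) are \emph{not} the permissible lines; the permissible lines are chosen to \emph{intersect} them, hence lie in the \emph{opposite} ruling. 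If the auxiliary line is a secant of $C$ (so in the bisecant ruling), the permissible lines are forced into the unisecant ruling, which is precisely the ruling you correctly identified as the one needed to get the conic conjugate. So secants do not "force the wrong, twisted-cubic, conjugate"; they force the right one.

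The paper closes the gap with \cref{lem:compatible_line}, which you did not invoke: pick a single secant line $L$ of $C$ avoiding the camera centers; requiring a quadric of the net to contain $L$ is one linear condition, so the quadrics containing $C\cup L$ form a pencil, and any three distinct members of that pencil satisfy the hypotheses of \cref{lem:compatible_line} and hence are compatible. Since $L$ meets $C$ twice and carries the permissible lines in the opposite ruling, $C$ is of type $(1,2,1,1)$ and the conjugate is the asserted conic. Note also that with this construction the three quadrics lie in a pencil, so their common intersection is $C\cup L$ rather than $C$ alone; your insistence that the three quadrics span the net (so that they cut out exactly $C$) is both unnecessary for the proposition and an extra constraint you would then have to reconcile with compatibility.
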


\begin{proof}
Given a twisted cubic, there is a three-dimensional family of quadrics containing it. This leaves us 2 degrees of freedom for each of the quadrics. In other words, for each line intersecting the curve, there is a unique quadric containing both the cubic and the line (unless the line is a secant, in which case a whole pencil of quadrics contains it). 

Take a line $L$, secant to $C$, and not passing through the camera centers. There exists a pencil of quadrics containing both $C$ and $L$. By \cref{lem:compatible_line}, any three distinct quadrics in this pencil form a compatible triple of quadrics. The line $L$ has bidegree $(1,0)$ and is a secant to $C$, so it intersects $C$ twice. It follows that the twisted cubic is of type $(1,2,1,1)$, so the conjugate is of type $(1,1,0,0)$, a conic not passing through the camera centers.
\end{proof}

\begin{proposition}
Any configuration consisting of three cameras along with a set of points lying on the strict transform of a the union of a twisted cubic and a line secant to the cubic is critical if all the camera centers lie on the union of the cubic and secant line. The conjugate is a curve of the same type.
\end{proposition}

\begin{proof}
See \cite{anyViews}, Propositions 7.15 and 7.16.
\end{proof}

If we take a twisted cubic passing through only one camera center, and the remaining two not on a secant, we end up with only 1 degree of freedom for two of the quadrics and 0 for the last. Moreover, a twisted cubic not passing through any camera center fixes all three quadrics. With so few degrees of freedom, one can typically not find a compatible triple of quadrics, so neither of these configurations is critical$-$that is, unless the three camera centers happen to be collinear. To prove this, we first need the following (seemingly unrelated) lemma:

\begin{lemma}
\label{lem:cubic_and_line}
Let $C\in\p3$ be a twisted cubic, let $l$ be a line in $\p3$, not secant to $C$, and let $p$ be a fixed point on $l$ not lying on the twisted cubic. There exists a point $x\in C$ such that for every point $q\in l$, such that $q\neq p$ and $q\notin C$, the unique quadric surface containing $C$, $p$, and $q$, also contains the line $\overline{xq}$.
\end{lemma}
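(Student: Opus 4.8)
The plan is to produce $x$ as a residual intersection point of $C$ with a cleverly chosen plane, using the fact that every quadric through $C$ and $p$ secretly contains a fixed secant line. Recall the classical facts that the quadrics containing a twisted cubic $C\subset\p3$ form a net (a $\p2$ of quadrics), and that through a point $p\notin C$ (off the tangent developable) there passes a unique secant line $L$ of $C$, meeting $C$ in two points $a,b$. First I would analyse the pencil $\mathcal P_p$ of quadrics through $C$ and $p$: since $L$ is a secant, every quadric through $C$ already contains $a$ and $b$, so requiring it to pass through the third point $p\in L$ forces the whole line $L$ onto the quadric. Equivalently, the base locus of $\mathcal P_p$ is the quartic $C\cup L$. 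Hence every quadric of $\mathcal P_p$ — in particular the quadric $S_q$ through $C$, $p$, $q$ for each admissible $q\in l$ — contains both $C$ and $L$.

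The crucial observation is that $l$ and $L$ are two \emph{distinct} lines through the common point $p$ (distinct precisely because $l$ is not a secant whereas $L$ is), so they span a single plane $\Pi:=\vspan(l,L)$; consequently $\vspan(L,q)=\Pi$ for every $q\in l\setminus L$. I would then define $x$ to be the residual point of the degree-three divisor $\Pi\cap C$: this divisor contains $a$ and $b$ (which lie on $L\subset\Pi$), and $x$ is the third point. By construction $x$ is a single point of $C$ depending only on $l$ and $p$.

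The verification is then immediate incidence geometry. Fix $q\in l$ with $q\neq p$ and $q\notin C$, and let $S_q$ be the unique quadric through $C$, $p$, $q$. Since the irreducible non-planar curve $C$ lies on $S_q$, the plane $\Pi$ cannot be a component of $S_q$, so $S_q\cap\Pi$ is a genuine conic; as it contains the line $L\subset S_q\cap\Pi$ it must degenerate as $S_q\cap\Pi=L\cup m$ for some line $m$. Now $x\in C\subset S_q$ and $x\in\Pi$, while $x\notin L$ (because $L\cap C=\{a,b\}$ and $x\neq a,b$), so $x\in m$; similarly $q\in S_q\cap\Pi$ and $q\notin L$ (because $l\cap L=\{p\}$ and $q\neq p$), so $q\in m$. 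As $x\in C$ and $q\notin C$ are distinct points, the line through them equals $m$, giving $\overline{xq}=m\subset S_q$, which is exactly the assertion.

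The main obstacle is conceptual rather than computational: recognising that $S_q$ silently contains the fixed secant $L$, and that the planes $\vspan(L,q)$ are all equal, so that the residual point $x$ is forced to be constant along $l$. Once these two points are isolated the argument is purely linear. The remaining care concerns degenerate positions, which I would dispatch by continuity: if $p$ lies on the tangent developable the secant $L$ degenerates to a tangent line, and if the non-secant line $l$ happens to meet $C$ the residual point $x$ might fall on $l$ or coincide with $a$ or $b$; these special configurations are limits of the generic one treated above.
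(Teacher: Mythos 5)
Your proposal is correct and follows essentially the same route as the paper: both define $x$ as the residual point of $C$ in the plane spanned by $l$ and the unique secant through $p$, and both exploit the fact that every quadric in the pencil through $C$ and $p$ contains that secant. The only cosmetic difference is in the final verification, where the paper counts three intersection points of $\overline{xq}$ with the quadric while you decompose the plane section as $L\cup m$; these are the same incidence argument.
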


\begin{proof}
Let $s$ denote the unique secant line of $C$ passing through $p$. The plane spanned by $s$ and $l$ intersects $C$ in 3 points. Two of these points lie on $s$, the last one does not. We denote this one by $x$. 

In the pencil of quadrics containing $C$ and $p$, each quadric also contains the secant $s$. For any point $q$ as above, the quadric containing $C$, $p$, and $q$, intersects the line $\overline{xq}$ in three points, $x$, $q$, and a third point lying on $s$. Since it contains three points on $\overline{xq}$, it contains the whole line. 
\end{proof}

\begin{proposition}
Any configuration consisting of three collinear cameras along with a set of points lying on the strict transform of a twisted cubic not intersecting the line with the camera centers is critical. The conjugate is a rational quartic curve containing the three camera centers.
\end{proposition}

\begin{proof}
Let $C$ denote the twisted cubic, let $p_i$ denote the camera centers, and let $L$ denote the line spanned by the camera centers.

Each quadric needs to contain $C$, as well as two camera centers, which means all three quadrics are fixed. However, we show that even without any freedom to choose the quadrics, the triple is still compatible.

Let $S_P^{ij}$ be the unique quadric containing $C$, $p_i$, and $p_j$. If $S_P^{ij}$ is smooth there are two lines through $p_i$. One is a secant to $C$, the other line intersects $C$ exactly once; denote the latter line by $g_{P_i}^{j}$. If $S_P^{ij}$ is a cone, there is only one line through $p_i$, in this case, we take this one to be $\gp{i}{j}$. Repeating this process for each pair of cameras on each of the three quadrics, we get $6$ lines, two on each quadric, and two through each camera center. 

By \cref{lem:cubic_and_line}, the lines $g_{P_i}^{k}$ and $g_{P_j}^{k}$ intersect $C$ at the same point, which we denote by $x_k$ (see \cref{fig_cubicline}). For now, assume that the two lines $g_{p_i}^{j}$ and $g_{p_i}^{k}$ through each $p_i$ are distinct, and denote their spanned plane by $\Pi_i$. 

The plane $\Pi_{1}$ intersects $S_P^{12}$ in two lines. The first line is $g_{p_1}^{2}$, whereas the second is the unique line on $S_P^{12}$ that is secant to $C$ \emph{and} passes through $x_3$. Denote the second line by $l_{12}$. The plane $\Pi_{2}$ intersects $S_P^{12}$ in two lines as well, the first being $g_{p_2}^{1}$, and the latter being $l_{12}$. Repeating this construction with the two other quadrics, we get two other lines $l_{13}$ and $l_{23}$. The three lines $l_{ij}$ along with their point of intersection $x$ satisfy the conditions for \cref{lem:compatible}, so the quadrics $S_P^{12},S_P^{13},S_P^{23}$ do indeed form a compatible triple.

\begin{figure}
\begin{center}
\includegraphics[width = 0.40\textwidth]{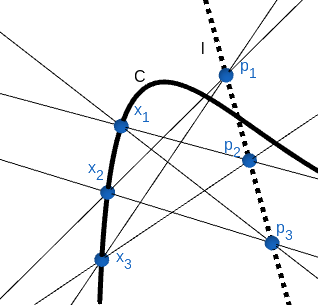}
\end{center}
\caption{The twisted cubic $C$ and the line $l$. The line $g_{p_i}^{j}$ is the line spanned by $p_i$ and $x_j$.}
\label{fig_cubicline}
\end{figure}

Since the twisted cubic is of type $(1,2,0,0)$, its conjugate is of type $(1,3,1,1)$, a rational quartic curve passing through all three camera centers. This is one of the curves from \cref{prop:rational_curves}, appearing when all the quadrics $S_Q^{ij}$ on the other side coincide.

In the construction above, the pair of lines $g_{P_i}^{j}$ through each camera center may coincide. In this case, the three points $x_i$ also coincide, and the three quadrics $S_P^{ij}$ are all cones. By \cref{prop:compatible_collinear} the quadrics are compatible in this case also, and the camera centers on the rational quartic on the other side are collinear.
\end{proof}

In the case where the line containing the cameras intersects the twisted cubic, but is not a secant or tangent, the three quadrics $S_P^{ij}$ are all equal. In this case, the only way the lines $g_{P_i}^{j}$ can be permissible is if they are all secant to the twisted cubic, but in this case, the twisted cubic is of type $(2,1,0,0)$ (or type $(3,1,1,1)$ if you include the line) and neither of these appear as critical configurations when the three quadrics intersect. Hence no such critical configurations exist.

\begin{proposition}
\label{prop:twisted_cubic_and_secant_line}
Any configuration consisting of three collinear cameras along with a set of points lying on the strict transform of a twisted cubic having the line with the camera centers as a secant or tangent is critical. The conjugate is the strict transform of a singular rational quartic curve passing through the three camera centers, a nodal curve if the line with the cameras is secant, and a cuspidal curve if the line is tangent.
\end{proposition}

\begin{proof}
Let $C$ denote the twisted cubic, and let $L$ denote the line spanned by the camera centers. Since the line $L$ containing the cameras is a secant (or tangent) to $C$, any quadric containing $C$ and two cameras contains all of $L$. The union of a twisted cubic and a secant (or tangent) line is contained in a pencil of quadrics, leaving us 1 degree of freedom when choosing $S_P^{ij}$. In particular, for each line secant to $C\cup L$, there is a unique quadric containing $C\cup L$ and the secant.

Let $x\in C$ be a point not lying on $L$, chosen in the following way:
\begin{itemize}
\item If $L$ intersects $C$ in two real, distinct points, these two points cut both $C$ and $L$ into two connected parts, one that intersects the plane at infinity, and one that does not. One part contains an even number of cameras, let $x$ lie on the corresponding part of $C$.
\item If $L$ is a tangent to $C$, or if the two intersection points are not real, let $x$ be any point on $C$ not lying on $L$.
\end{itemize}
Projecting from $x$, we get a conic curve $C'$ in the plane, along with a line $L'$ containing the projections $p_1',p_2',p_3'$ of the camera centers.

\begin{figure}
\begin{center}
\includegraphics[width = 1.0\textwidth]{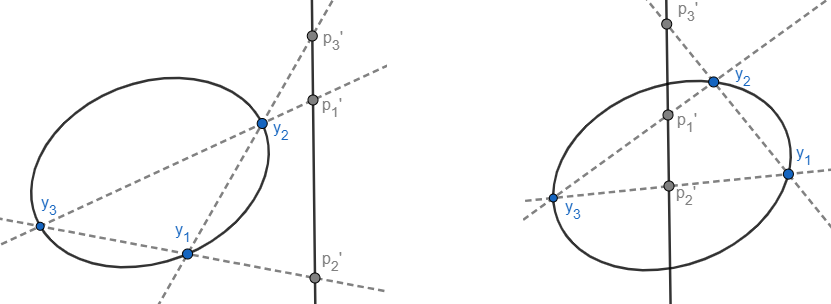}
\end{center}
\caption{By our choice of $x$, zero (left) or two (right) camera centers will lie \enquote{inside} the conic, in both cases there exists a triangle $y_1,y_2,y_3$ with vertices on the conic such that $p_i'$ lies on $\overline{y_jy_k}$.}
\label{fig_cubic+line}
\end{figure}

We can choose coordinates in such a way that $C'$ is an ellipse. By choosing $x$ as above, we have ensured that among $p_1',p_2',p_3'$, an even number lie \enquote{inside} $C'$. Without loss of generality, we can assume that $p_3'$ lies \enquote{outside} $C'$ (see \cref{fig_cubic+line}). Let $y_3$ be a point on $C'$, the line spanned by $y_3$ and $p_1'$ intersects $C'$ in a second point, we denote this by $y_2$. Similarly, we denote by $y_1$ the second point where the line spanned by $y_3$ and $p_1'$ intersects $C'$. As $y_3$ moves along the conic, the line $\overline{y_1y_2}$ hits every point on $L'$ \enquote{outside} $C'$ twice, fix $y_3$ to be one of the two points where $\overline{y_1y_2}$ passes through $p_3'$(see \cref{fig_cubic+line}).

The preimages of the three points $y_i$ are three lines through $x$. Denote the preimage of $y_i$ by $l_{jk}$, and let $S_P^{ij}$ be the unique quadric containing $C\cup L$ and the line $l_{ij}$. Then the point $x$, along with the three lines $l_{ij}$ satisfies the conditions in \cref{lem:compatible}, so the three quadrics $S_P^{12},S_P^{13},S_P^{23}$ constitute a compatible triple. Hence the configuration is critical.

The three quadrics $S_Q^{ij}$ on the other side are all equal to the same cone. The twisted cubic is of type $(1,2,0,0)$, so its conjugate is of type $(1,3,1,1)$, a rational quartic curve with a singularity in the vertex of the cone. These are the curves discussed in \cref{prop:elliptic_curve_is_critical}. Note that the line $L$ is also part of the critical locus, with its conjugate being the vertex of the cone on the other side.
\end{proof}

\subsubsection{Curves of lower degree}
\label{sec:curves_of_lower_degree}
We are now left with curves appearing as the intersection of quadrics where all components are of degree at most 2. Most of these are subsets of the \enquote{plane + conic} configuration, and hence critical, having conjugates that are of the same type (see \cref{prop:plane+conic_critical}). There are also some that do not appear as subsets of the \enquote{plane + conic} configuration, these are degeneration of the cases already discussed above and can be proven to be critical using proofs similar to the non-degenerate cases, these are:
\begin{itemize}
\item Union of two conics containing the camera centers, not all camera centers on one conic (degenerate elliptic quartic)
\item Union of conic and two intersecting lines containing the camera centers, one or two cameras lying on the conic (degenerate elliptic quartic)
\item Union of four lines containing the camera centers, each line intersecting exactly two others (degenerate elliptic quartic)
\item Union of conic and a line intersecting the conic once, camera centers either collinear or at least two lying in the union (degenerate twisted cubic)
\item Union of three lines, one line intersecting the other two, camera centers either collinear or at least two lying on the lines (degenerate twisted cubic)
\end{itemize}

Moreover, among the configurations that \emph{are} subsets of the \enquote{plane + conic} configuration, some also appear as the intersection of non-reducible quadrics, these have other conjugates that are not of the same type. For the sake of completeness, we give a summary of them here. No proof is given, but the reader can verify that the curves lie on the intersection of three compatible irreducible quadrics. This task should be fairly straightforward since there are many degrees of freedom for the quadrics when the curves are of a low degree.
\begin{enumerate}
\item Two intersecting lines on one side, and two disjoint lines on the other. The number of cameras lying on the two lines is equal on both sides.
\item Cameras lying on a conic curve. The conjugate is a line not passing through the camera centers.
\item Two lines not passing through the camera centers. This one has two conjugates, the first is three cameras lying on the union of 3 lines (one intersecting the other two). The second conjugate consists of points lying on the union of a conic and a line, and the cameras lying on the conic. This triple of conjugates is a degeneration of the \enquote{twisted cubic $\leftrightarrow$ conic} configuration shown in \cref{prop:twisted_cubic_is_critical}.
\end{enumerate}

\subsection{Quadrics intersecting in a finite number of points}
\label{sec:quadrics_intersecting_in_points}
Lastly, we have the case where three quadrics intersect in a finite number of points. This is the \enquote{general} case in the sense that with two general triples of cameras, the set of critical points on each side is finite. Indeed, on each side, we get a compatible triple of quadrics, whose intersection is 8 points (counted with multiplicity and including possible complex points), 7 of these points lie in the set of critical points, whereas the last one does not (recall \cref{lem:remove_line_point_or_plane}).

Conversely: given a set of seven points $X$ and three cameras $P_i$, when is $(P_1,P_2,P_3,X)$ a critical configuration? Since 9 points are enough to fix a quadric, there is generally only one triple of quadrics $S_P^{ij}$ such that $p_i,p_j\in S_P^{ij}$ and $X\in\widetilde{S_P^{ij}}$ where $\widetilde{S_P^{ij}}$ is the strict transform of $S_P^{ij}$ under the blow-up of $p_i,p_j$ and the total transform under the final blow-up (in other words, $\widetilde{S_P^{ij}}$ belongs to the class $2H-E_i-E_j$). Note that this is generally the case even if one (or more) of the points in $X$ happen to lie on an exceptional divisor over a camera center, say $p_1$. In this case, the quadric $S_P^{12}$ is fixed by the constraints of having to contain $\pi(X)$, $p_1$, $p_2$ as well as having a certain line through $p_1$ as a tangent, likewise for $S_P^{13}$, the quadric $S_P^{23}$ is fixed to be the one containing $\pi(X)$ and all three camera centers.

So generally the triple of quadrics will be fixed. Now IF this triple of quadrics is compatible, it comes with six permissible lines, spanning three planes. The three planes intersect in one of the 8 points lying on the intersection of the three quadrics. The configuration $(P_1,P_2,P_3,X)$ is critical if and only if the triple of quadrics is compatible \emph{and} the three planes do not intersect in one of the points in $X$ (by \cref{thr:compatible_implies_critical}). This gives us the compatibility condition for 7 general points.

There exist certain positions of points and camera centers such that they do not fix all three quadrics. In this case, the configuration is still critical as long as there exists a suitable triple of quadrics, the points and camera centers do not need to fix the quadrics. The conditions are summarized in \cref{prop:7_points_critical}.

Note that if $S_P^{ij}$ is not fixed, this is because there exists a (possibly degenerate or reducible) elliptic quartic curve that passes through $p_i,p_j$ and whose strict transform contains all of $X$. In the previous section we have shown that if the curve also passes through the final camera, the whole curve is part of a critical configuration, so the seven points along with the cameras will be a critical (but not maximal) configuration in this case.

\begin{proposition}
\label{prop:7_points_critical}
Let $P_1,P_2,P_3$ be three cameras, and let $X\subset\Bl_{\textbf{P}}(\p3)$ be a set consisting of seven points. Then $(P_1,P_2,P_3,X)$ is critical if and only if there exists three quadrics $S_P^{ij}$ satisfying the following conditions:
\begin{enumerate}
\item $S_P^{ij}$ contains the two camera centers $p_i,p_j$.
\item $X\subset\widetilde{S_P^{12}}\cap \widetilde{S_P^{13}}\cap\widetilde{S_P^{23}}$
\item $S_P^{12},S_P^{13},S_P^{23}$ is compatible.
\item The intersection of the three planes spanned by the six permissible lines $\gp{i}{j}$ does not lie in $\pi(X)$.
\end{enumerate}
\end{proposition}
\begin{remark}
A sextuple of permissible lines $\gp{i}{j}$ only exists if the quadrics are compatible, so the first two conditions need to be fulfilled for the third to make sense. 
\end{remark}

For the sake of completeness, we examine what happens if $X$ does not consist of exactly 7 points. For 8 or more points, a configuration is critical if and only if it is a subconfiguration of one of the critical configurations mentioned earlier in the section, i.e. if the points lie on a critical curve or surface. For 6 or fewer points, the configuration is always critical \cite{6points}, but not maximal, since any configuration is contained in a critical configuration consisting of at least 7 points. In particular, the generic 6-point configuration is contained in exactly 3 critical 7-point configurations.

The critical configurations for three views are summarized in \vref{fig_12critical}. For illustration purposes, the blow-up is not shown, the actual configurations consist of points lying on the strict transform of the varieties depicted in \cref{fig_12critical}.

\section*{Acknowledgments}
I would like to thank my two supervisors, Kristian Ranestad and Kathlén Kohn, for their help and guidance, for providing me with useful insights, and for their belief in my work. I thank the anonymous referee for a very thorough and helpful report. I would also like to thank the Norwegian National Security Authority for funding my project. 

\addcontentsline{toc}{section}{References}

%\bibliographystyle{plain}
%\bibliography{references.bib}
\printbibliography

\end{document}